\DeclareMathAlphabet{\mathcalligra}{T1}{calligra}{m}{n}
\DeclareMathAlphabet{\mathscrmin}{T1}{mathscr}{m}{n}
\numberwithin{equation}{subsection}
\theoremstyle{plain}
        \newtheorem{theorem}[equation]{Theorem}
        \newtheorem{lemma}[equation]{Lemma}
        \newtheorem{proposition}[equation]{Proposition}
        \newtheorem{corollary}[equation]{Corollary}
	    \newtheorem{definition}[equation]{Definition}
        \newtheorem{sinnadaitalica}[equation]{}
\theoremstyle{definition}
        \newtheorem{remark}[equation]{Remark}
        \newtheorem{sinnadastandard}[equation]{}
	\newtheorem{notation}[equation]{Notation}
\newcommand{\Lbl}{\cc{L}}
\newcommand{\cqd}{\hfill$\Box$}
\newcommand{\cc}{\mathcal}
\newcommand{\ff}{\mathsf}
\newcommand{\de}{definition}
\newcommand{\prop}{proposition}
\newcommand{\A}{\mathcal{A}}
\newcommand{\B}{\mathcal{B}}
\newcommand{\C}{\mathcal{C}}
\newcommand{\E}{\mathcal{E}}
\newcommand{\Cat}{\mathcal{C}at}
\newcommand{\eps}{\varepsilon}
\newcommand{\mr}[1]{\overset {#1} {\longrightarrow}}
\newcommand{\xr}[1]{\xrightarrow {#1}}
\newcommand{\Mr}[1]{\overset {#1} {\Longrightarrow}}
\newcommand{\rimply}{\Rightarrow}
\newcommand{\mrpairviejo}[2]
   {
    \xymatrix@C=5ex@R=2.4ex
            {
             {} \ar@<1.6ex>[r]^{#1} 
	            \ar@<-1.1ex>[r]^{#2} 
	         & {}
            }
   }
\newcommand{\mrpair}[2]
   {
    \xymatrix@C=5ex@R=2.4ex
            {
             {} \ar@<1ex>[r]^{#1} 
	            \ar@<-1ex>[r]_{#2} 
	         & {}
            }
   }
 \newcommand{\mrpairc}[2]
   {
    \xymatrix@C=5ex@R=2.4ex
            {
             {} \ar@<1ex>[r]^{#1} 
	            \ar@<-1ex>[r]|{o}_{#2} 
	         & {}
            }
   }
 \newcommand{\mrpaircc}[2]
   {
    \xymatrix@C=5ex@R=2.4ex
            {
             {} \ar@<1ex>[r]|{o}^{#1} 
	            \ar@<-1ex>[r]|{o}_{#2} 
	         & {}
            }
   }
\newcommand{\mlpair}[2]
   {
    \xymatrix@C=5ex@R=2.4ex
            {
             {} 
              & {} \ar@<1.0ex>[l]_{#2} 
	          \ar@<-1.7ex>[l]_{#1}
            }
    }
\newcommand{\cellrd}[3] 
{
  \xymatrix@C=7ex@R=2.4ex
         {
          {} \ar@<1.6ex>[r]^{#1} 
             \ar@{}@<-1.3ex>[r]^{\!\! {#2} \, \!\Downarrow}
             \ar@<-1.1ex>[r]_{#3} 
          & {}
         }
 }
 \newcommand{\modif}[3]
 {
  \xymatrix@C=7ex@R=2.4ex
         {
          {} \ar@<1.6ex>@{=>}[r]^{#1} 
             \ar@{}@<-1.3ex>@{=>}[r]^{\!\! {#2} \, \!\downarrow}
             \ar@<-1.1ex>[r]_{#3} 
          & {}
         }
 }
 \newcommand{\scellrd}[3] 
 {
  \xymatrix@C=4.5ex@R=2.4ex
         {
          {} \ar@<1.6ex>[r]^{#1}
             \ar@{}@<-1.3ex>[r]^{\!\! {#2} \, \!\Downarrow}
             \ar@<-1.1ex>[r]_{#3}
          & {}
         }
}
\newcommand{\cellld}[3] 
 {
  \xymatrix@C=6ex@R=2.4ex
         {
            {} 
          & {} \ar@<1.0ex>[l]^{#3} 
          \ar@{}@<-1.7ex>[l]^{\!\! {#2} \, \!\Downarrow}
	                                 \ar@<-1.7ex>[l]_{#1}
         }
 }
\newcommand{\cellpairrd}[4] 
 {
  \xymatrix@C=10ex@R=2.4ex
         {
          {} \ar@<1.6ex>[r]^{#1} 
             \ar@{}@<-1.3ex>[r]^{\!\! {#2} \, \!\Downarrow 
                                 \;\; {#3} \, \!\Downarrow }
             \ar@<-1.1ex>[r]_{#4} 
          & {}
         }
 }
\newcommand{\cellpairrdc}[4] 
 {
  \xymatrix@C=10ex@R=2.4ex
         {
          {} \ar@<1.6ex>[r]^{#1} 
             \ar@{}@<-1.3ex>[r]^{\!\! {#2} \, \!\Downarrow 
                                 \;\; {#3} \, \!\Downarrow }
             \ar@<-1.1ex>[r]|{o}_{#4} 
          & {}
         }
 }
\newcommand{\cellpairrdcc}[4] 
 {
  \xymatrix@C=10ex@R=2.4ex
         {
          {} \ar@<1.6ex>[r]|{o}^{#1} 
             \ar@{}@<-1.3ex>[r]^{\!\! {#2} \, \!\Downarrow 
                                 \;\; {#3} \, \!\Downarrow }
             \ar@<-1.1ex>[r]|{o}_{#4} 
          & {}
         }
 }
\newcommand{\coLimsinc}[2]
   {
    \underset{#1}{\underrightarrow{\ff{Lim}}}
    \; {#2}
   }
 \newcommand{\Limsinc}[2]
   {
    \underset{#1}{\underleftarrow{\ff{Lim}}}
    \; {#2}
   }
\newcommand{\coLim}[2]
   {
    \underset{#1}{\underrightarrow{\ff{\sigma Lim}}}
    \; {#2}
   }
\newcommand{\opcoLim}[2]
   {
    \underset{#1}{\underrightarrow{\ff{op \sigma Lim}}}
    \; {#2}
   }
\newcommand{\cart}   
 {
  \mathscr{C}
 }
\newcommand{\coLimconw}[2]
   {
    \underset{#1}{\underrightarrow{\ff{\sigma Lim^\Sigma}}}
    \; {#2}
   }   
\newcommand{\opcoLimconw}[2]
   {
    \underset{#1}{\underrightarrow{\ff{op \sigma Lim^\Sigma}}}
    \; {#2}
   }    
   \newcommand{\Limconw}[2]
   {
    \underset{#1}{\underleftarrow{\ff{\sigma Lim^\Sigma}}}
    \; {#2}
   }   
\newcommand{\coLimconcartsigma}[2]
   {
    \underset{#1}{\underrightarrow{\ff{\sigma Lim^{\cart_\Sigma}}}}
    \; {#2}
   }
\newcommand{\coLimconcartp}[2]
   {
    \underset{#1}{\underrightarrow{\ff{\sigma Lim^{\cart_P}}}}
    \; {#2}
   }
\newcommand{\coLimconcartw}[2]
   {
    \underset{#1}{\underrightarrow{\ff{\sigma Lim^{\cart_W}}}}
    \; {#2}
   }
\newcommand{\bicoLimconcartsigma}[2]
   {
    \underset{#1}{\underrightarrow{\ff{\sigma biLim^{\cart_\Sigma}}}}
    \; {#2}
   }       
\newcommand{\bicoLimconcartw}[2]
   {
    \underset{#1}{\underrightarrow{\ff{\sigma biLim^{\cart_W}}}}
    \; {#2}
   }       
\newcommand{\bicoLimconw}[2]
   {
    \underset{#1}{\underrightarrow{\ff{\sigma biLim^{\Sigma}}}}
    \; {#2}
   }
\newcommand{\Lim}[2]
   {
    \underset{#1}{\underleftarrow{\ff{\sigma Lim}}}
    \; {#2}
   }
\newcommand{\cbiLim}[2]
   {
    \underset{#1}{\underleftarrow{\ff{\sigma biLim}}}
    \; {#2}
   }   
\newcommand{\bicoLim}[2]
   {
    \underset{#1}{\underrightarrow{\ff{\sigma biLim}}}
    \; {#2}
   }
\newcommand{\dcell}[1]  
          {
					 \ar@<8pt>@{-}[d]+<-4pt,8pt> 
           \ar@<-8pt>@{-}[d]+<4pt,8pt>
           \ar@{}[d]|{#1}
          }
\newcommand{\dcellb}[1]  
          {
           \ar@<10pt>@{-}[d]+<-5pt,8pt> 
           \ar@<-10pt>@{-}[d]+<5pt,8pt>
           \ar@{}[d]|{#1}
          }
\newcommand{\deq}        
         {
          \ar@{=}[d]
         }
\newcommand{\dreq}      
         {
          \ar@{=}[dr]
         }
\newcommand{\dleq}      
         {
          \ar@{=}[dl]
         }
\newcommand{\dccell}[1]  
          {
           \ar@{-}[ld] 
           \ar@{-}[rd] 
           \ar@{}[d]|{#1}  
          }
\newcommand{\dcellbb}[1]  
          {
           \ar@<20pt>@{-}[d]+<-10pt,12pt> 
           \ar@<-20pt>@{-}[d]+<10pt,12pt>
           \ar@{}[d]|{#1}
          } 
\newcommand{\dl}   
          {                        
           \ar@<-2pt>@{-}[d]+<4pt,8pt>
          }
\newcommand{\dr}   
          {                        
           \ar@<2pt>@{-}[d]+<-4pt,8pt> 
          }
\newcommand{\dc}[1]   
          {                        
           \ar@{}[d]|{#1}  
          }
\newcommand{\dcr}[1]  
          {                        
           \ar@{}[dr]|{#1}  
          }
\newcommand{\uccell}[1]  
          { 
           \ar@{-}[ur] 
           \ar@{}[u]|{#1} 
           \ar@{-}[ul] 
          }
\newcommand{\uccellb}[1] 
          { 
           \ar@<-1ex>@{-}[ur] 
           \ar@{}[u]|{#1} 
           \ar@<1ex>@{-}[ul] 
          }
\newcommand{\dcellop}[1]  
          {
					 \ar@<6pt>@{-}[d]+<6pt,8pt> 
           \ar@<-6pt>@{-}[d]+<-6pt,8pt>
           \ar@{}[d]|{#1}
          }
\newcommand{\dcellopb}[1] 
          {
					 \ar@<7pt>@{-}[d]+<7pt,8pt> 
           \ar@<-7pt>@{-}[d]+<-7pt,8pt>
           \ar@{}[d]|{#1}
          }
\newcommand{\did}{\ar@2{-}[d]}
\newcommand{\op}[1]
          {
           \ar@{-}[ld] 
           \ar@{-}[rd] 
           \ar@{}[d]|{#1}  
          }
\newcommand{\cl}[1]
          { 
           \ar@{-}[ur] 
           \ar@{}[u]|{#1} 
           \ar@{-}[ul] 
          }
\newcommand{\s}{$\sigma$}
\newenvironment{acknowledgements}
  {
   \begin{abstract}}
  {\end{abstract}
   }
\begin{document}
\title{Sigma limits in 2-categories and flat pseudofunctors}
\author{Descotte M.E., Dubuc E.J., Szyld M.}

\date{\vspace{-5ex}}

\maketitle

\begin{abstract}

In this paper we introduce sigma limits (which we write $\sigma$-limits), a concept that interpolates between lax and pseudolimits: for a fixed family $\Sigma$ of arrows of a 2-category $\cc{A}$, a 
$\sigma$-cone for a $2$-functor $\cc{A} \mr{F} \cc{B}$ is a lax cone such that the structural 2-cells corresponding to the arrows of $\Sigma$ are invertible. The conical \emph{$\sigma$-limit} of $F$ is the universal $\sigma$-cone. Similary we define $\sigma$-natural transformations and weighted 
$\sigma$-limits. We consider also the case of bilimits. 
We develop the theory of $\sigma$-limits and $\sigma$-bilimits, whose importance relies on the following key fact: 
\emph{any weighted $\sigma$-limit (or $\sigma$-bilimit) can be expressed as a conical one}.
From this we obtain, in particular, a canonical expression of an arbitrary $\Cat$-valued 2-functor as a conical $\sigma$-bicolimit of representable 2-functors, for a suitable choice of $\Sigma$, which is equivalent to the well known bicoend formula.

As an application, we establish the 2-dimensional theory of flat pseudofunctors. We define a $\Cat$-valued pseudofunctor to be flat when its left bi-Kan extension along the Yoneda \mbox{$2$-functor} preserves finite weighted bilimits. We introduce a notion of $2$-filteredness of a $2$-category with respect to a class $\Sigma$, which we call \emph{$\sigma$-filtered}.  Our main result is: 
\emph{A pseudofunctor $\cc{A} \mr{} \cc{C}at$ is flat if and only if it is a $\sigma$-filtered \mbox{$\sigma$-bicolimit} of representable 2-functors.}
In particular the reader will notice the relevance of this result for the development of a theory of $2$-topoi.

\end{abstract}
 
\begin{acknowledgements}
\noindent We are grateful to the referees of AiM for their comments on this paper, and in particular for the suggestion to give proper emphasis to our results on conical $\sigma$-limits.
\end{acknowledgements}

\subsection*{Introduction}

  Size issues are in general not relevant in this paper, but we indicate the smallness assumption when it applies.
 The article is concerned with the notion of flat functor in the context of $2$-categories. Given a $2$-functor $\cc{A} \mr{P} \cc{C}at$ from a $2$-category $\cc{A}$ with values in the $2$-category $\cc{C}at$ of small \mbox{$1$-categories,} we want to define when it should be considered \emph{flat}, and prove a theorem that characterizes flatness using 
 appropriate notions of \emph{filteredness} and \emph{pro-representability}.
Recall that a $\cc{S}et$-valued functor is flat           
when its left Kan extension along the Yoneda embedding 
is left exact (this being equivalent to its discrete cofibration being a cofiltered category). This notion is considered in \cite[\S$\,$6]{K3} for
$\cc{V}$-enriched categories in general, and in particular for 
$\cc{V} = \cc{C}at$. 

We emphasize that 2-dimensional category theory is radically different from the theory of \mbox{$\cc{C}at$-enriched} categories, which, as well as the theory of $\cc{V}$-enriched categories for any $\cc{V}$, is a part of $1$-dimensional category theory.  

As is usually the case, the 
$\cc{C}at$-enriched version of flatness is too strict, and a relaxed notion is the important one. This is easily settled, but more difficult and unsolved so far is the fundamental equivalence between flatness and appropriate notions of 
filteredness for $2$-categories and pro-representability of 
$2$-functors, which is the problem solved in this article. 

Note that by 2-category, 2-functor, we mean the concepts which are sometimes referred to as strict 2-category, strict 2-functor. 
Though our original objective was to have results for $2$-functors, the notion of \emph{pseudofunctor} was imposed upon us as the correct generality in which to define flatness in the $2$-dimensional context. 
However, for the sake of simplicity in many calculations we work primarily with \mbox{$2$-functors.} We note that no generality is lost since, while we prove our main theorem (Theorem \ref{th:main}) for $2$-functors, the corresponding theorem for pseudofunctors (Theorem \ref{th:parapseudo}) follows as a corollary.

We define a pseudofunctor to be \emph{left exact} if it preserves finite weighted bilimits. 
For a pseudofunctor $\A \mr{P} \Cat$, we define the left bi-Kan extension (as already considered in \cite{PKan1}) pseudofunctor 
$\cc{H}om_p(\cc{A}^{op}, \cc{C}at) \mr{P^*} \cc{C}at$ along the Yoneda $2$-functor $\cc{A} \mr{h} \cc{H}om_p(\cc{A}^{op}, \cc{C}at)$ (namely, the bi-universal pseudonatural transformation $P \rimply P^* h$, where $\cc{H}om_p(\cc{A}^{op}, \cc{C}at)$ is the $2$-category of $2$-functors, pseudonatural transformations and modifications). 
Note that since weighted bilimits exist in $\cc{C}at$ this is actually a \emph{pointwise} bi-Kan extension. Furthermore,  
bilimits in $\cc{C}at$ can be chosen to be pseudolimits, and then it follows that when $P$ is a $2$-functor, $P^*$ can be chosen to be a $2$-functor. 
Also note that from these definitions it follows that the flatness of a $2$-functor $P$, which we define stipulating that $P^*$ is left exact, is preserved by pseudonatural equivalences, that is, equivalences in $\cc{H}om_p(\cc{A},\, \cc{C}at)$.

Let $\cc{A} \mr{P} \cc{C}at$, $\cc{A}^{op} \mr{F} \cc{C}at$ be 2-functors, consider the 2-Grothendieck construction $\cc{E}l_P \mr{\Diamond_P} \cc{A}$ and the family $\cart_P$ given by the (co)cartesian morphisms, note that we abuse the notation and consider this family both in $\cc{E}l_P$ and $\cc{E}l_P^{op}$.
While confronting the difficulty posed by the fact that there was no expression in terms of a
conical colimit indexed in $\cc{E}l_P^{op}$ for the coend of the 2-functor $\cc{A}^{op} \times \cc{A}  \xr{F \times P}  \cc{C}at$, a crucial moment
that opened the door for the intended research on the 2-dimensional concept of flatness was
the discovery of the following previously unknown fact: 

\vspace{1ex}

\emph{The category of pseudo-dicones for the \mbox{$2$-functor} 
\mbox{$\cc{A}^{op} \times \cc{A}  \xr{F \times P}  \cc{C}at$} is isomorphic to the category of 
lax cones for the $2$-functor 
\mbox{$\cc{E}l_P^{op} \mr{\Diamond_P^{op}} \cc{A}^{op} \mr{F} \cc{C}at$} such that the structural $2$-cells corresponding to $\cart_P$ are invertible.}

\vspace{1ex}

This fact led us to consider a general notion that we call \emph{sigma natural transformation}, and denote $\sigma$-natural transformation, \mbox{already} defined in \cite{GRAY}. Let $(\cc{A},\; \Sigma)$ be a pair where $\cc{A}$ is a $2$-category and $\Sigma$ a distinguished \mbox{$1$-subcategory}. A \s-natural transformation is a lax natural transformation such that the structural $2$-cells corresponding to the arrows of $\Sigma$ are invertible. 
 This notion led in turn to the notion of weighted $\sigma$-limit, which became an essential tool for our work in this paper.
We comment that although the statement in italics above follows from Proposition \ref{eq:paraintro}, it also admits
a direct proof which we encourage the interested reader to do.

It would be appropriate to say that the most transcendental basic result in this paper is Theorem \ref{colimdaconico} which establishes the following:

\vspace{1ex}

\emph{Arbitrary weighted $\sigma$-limits (or $\sigma$-colimits) can be expressed as conical ones}. 

\vspace{1ex} 

This fact rescues for 2-dimensional category theory the
classical fact of ordinary category theory which states that conical limits suffice to construct
all weighted limits. As a first application we show three statements which are the 2-categorical
analogues of the respective classical facts
of the theory of flat functors.

\vspace{1ex}

\noindent
1. We establish a $2$-categorical version of the canonical expression of $\cc{S}et$-valued functors as colimits of representable functors: Any $2$-functor $\A \mr{P} \Cat$ is equivalent to the conical \mbox{$\sigma$-colimit} of the diagram $\cc{E}l_P^{op} \mr{\Diamond_P^{op}} \cc{A}^{op} \mr{} \cc{H}om_p(\A,\Cat)$ of representable $2$-functors, where $\sigma$ is taken with respect to the class $\cart_P$ of cartesian arrows.

\vspace{1ex}

\noindent
2. We introduce a notion of $2$-filteredness for pairs 
$(\cc{A}, \; \Sigma)$ that we denote by $\sigma$-filteredness. 
When $\cc{A}$ has finite weighted bilimits and $P$ is left exact, the pair ($\cc{E}l_P^{op}$,$\,\cart_P$) is \s-filtered,  
in other words the $\sigma$-colimit in the canonical expression of $P$ is a $\sigma$-filtered $\sigma$-colimit.

\vspace{1ex}

\noindent
3. We prove a key result that establishes that a $\sigma$-filtered $\sigma$-colimit of flat $2$-functors is flat. This follows from the commutativity (up to equivalence) of $\sigma$-filtered $\sigma$-colimits with finite weighted bilimits in $\Cat$, established in \cite{Otropaper}.

\vspace{1ex}

 Let $\A \mr{P} \Cat$ be a $2$-functor. Our main result, Theorem \ref{th:main}, states that the following are equivalent:

\noindent
 (i) $\cc{E}l_P$ is $\sigma$-cofiltered with respect to the family $\cart_P$ of cocartesian arrows.

\noindent
  (ii) $P$ is equivalent to a $\sigma$-filtered $\sigma$-colimit of representable \mbox{$2$-functors in $\cc{H}om_p(\A,\Cat)$.}

\noindent
  (iii) $P$ is flat.

If $\cc{A}$ has finite weighted bilimits, these statements are also equivalent to:

\noindent
  (iv) $P$ is left exact.

\vspace{1ex}

We remark that the concept of $\sigma$-limit and the results in this paper should be relevant in an intended definition of the concept of $2$-topos. In fact, it follows that for a small $2$-category $\cc{A}$, a \emph{point} of the \emph{$2$-topos} 
$\cc{H}om_p(\A^{op},\Cat)$ of \emph{$2$-presheaves} could be appropriately defined as a 
$\cc{C}at$-valued flat $2$-functor $\A \mr{} \cc{C}at$, whose left bi-Kan extension determines a morphism of $2$-topoi 
$\cc{H}om_p(\A^{op},\Cat) \mr{} \cc{C}at$; that is, a left adjoint (i.e., having a right adjoint) left exact $2$-functor. As A. Joyal pointed to us, a $2$-topos could be defined as a left exact \mbox{$2$-localization} of a $2$-category of $2$-presheaves.

\subsection* {Organization of the paper} 

In Section \ref{prelim} we fix notation and terminology. 
Through Sections \ref{sigmal} and \ref{sigma2f} we fix an arbitrary pair 
$(\cc{A}, \; \Sigma)$ with $\Sigma$ a $1$-subcategory containing all the objects of a $2$-category $\cc{A}$.

\vspace{1ex}

In Section \ref{sigmal} we develop the theory of \s-limits. In \S\,\ref{sub:sigmanatural} we define \mbox{\s-natural} transformations between $2$-functors 
\mbox{$\cc{A} \mr{} \cc{B}$} following 
\mbox{\cite[\S$\,$I,2 p.13,14]{GRAY}.} These are lax natural transformations where the $2$-cells associated to the arrows in $\Sigma$ are invertible. We denote the so determined $2$-category  $\cc{H}om_\sigma^{\Sigma}(\cc{A}, \, \cc{B})$, and whenever possible we will omit $\Sigma$ from the notation. In this way we have a chain of inclusions of categories with the same objects:
$$\cc{H}om_s(\A,\B) \hookrightarrow \cc{H}om_p(\A,\B) \stackrel{(1)}{\hookrightarrow} \cc{H}om_{\sigma}(\A,\B) \stackrel{(2)}{\hookrightarrow} \cc{H}om_{\ell}(\A,\B)$$
\noindent where the sub indexes  $s$, $p$, \s, $\ell$  indicate strict natural (i.e. $2$-natural), pseudonatural, \mbox{\s-natural} and lax natural respectively. When $\Sigma$ is the whole underlying category of $\A$, $(1)$ above is an equality, and when $\Sigma$ consists only of the identities $(2)$ is so. This allows for a unified treatment of many results known for pseudo and  lax natural transformations. 

Each choice of a subindex $s$, $p$, \s, $\ell$ gives rise to a notion of weighted limit that we study in 
\S\,\ref{sub:epslimits}. 
Note that the three cases $s$, $p$, $\ell$ are considered in \cite{K2}, but the general concept of $\sigma$-limit for an arbitrary $1$-subcategory $\Sigma$ is an essential tool to work with the notion of flat $2$-functor, and we use in this paper $\sigma$-limits that are neither lax nor pseudolimits.

\emph{Notation: In order to avoid repeating statements and, more important, to develop unified proofs whenever possible, we will use a letter $\eps$, that can stand for both \textquotedblleft$s$\textquotedblright\ and \textquotedblleft$\sigma$\textquotedblright, thus also for \textquotedblleft$p$\textquotedblright \ and \textquotedblleft$\ell$\textquotedblright.}

\emph{Warning: we use \emph{limit} to refer to a general weighted limit, and \emph{conical limit} to a classical limit (i.e., when the weight is the constant $2$-functor with value $1$)}.

In \S\,\ref{sub:epsends} we consider for arbitrary $\eps$ the corresponding notion of $\eps$-$end$ and $\eps$-$coend$, and establish the $\eps$-end formula for the category of $\eps$-natural transformations between $2$-functors. We consider also \emph{tensors} and \emph{cotensors}, and prove the constructions of weighted $\eps$-limits in terms of $\eps$-ends and cotensors. 

In \S\,\ref{sub:conicalsigma} we study explicitly conical 
$\sigma$-limits and $\sigma$-colimits, and show, modifying an argument of Street \cite{S}, the fundamental property of 
$\sigma$-limits that we mention in the introduction. We choose to establish it for colimits:  \emph{arbitrary $\sigma$-colimits can be expressed as conical 
$\sigma$-colimits}.
We establish then the canonical expression of a $\Cat$-valued $2$-functor as a conical 
\mbox{$\sigma$-bicolimit} of representable $2$-functors. We finish this subsection adapting Gray's construction of \mbox{$\sigma$-colimits} in 
$\Cat$ to fit our context, which is a result that we will need later.

In \S\ \!\ref{sub:pointwise} we analyze the computation of weighted $\eps$-limits in $2$-functor categories, and establish a general theorem about pointwise computation. This is an essential theorem in the theory of limits, which is used everywhere. In particular, we use it in 
\S\ \!\ref{sub:interchange} in order to 
prove properties of interchange of $\eps$-limits 
and $\eps$-colimits.

\vspace{1ex}

In Section \ref{sigma2f} we introduce and develop the notion of $2$-filteredness for pairs $(\cc{A}, \; \Sigma)$, which we refer to by saying that $\cc{A}$ is \emph{$\sigma$-filtered} (with respect to $\Sigma$).
In \S$\,$\ref{sub:sigmafiltered} we state the basic definition, which is a generalization of Kennison's three axioms in his definition of \emph{bifiltered} $2$-category \cite{K}, thus it also generalizes the equivalent 
Dubuc-Street notion of \emph{$2$-filtered} \mbox{$2$-category} \cite{DS}. Their notion corresponds to \s-filteredness when $\Sigma$ consists of all the arrows of $\cc{A}$. 
We consider particular finite diagrams such that their $\sigma$-cones suffice for $\sigma$-filteredness, and show that these $\sigma$-cones correspond (up to equivalence) to the cones of some particular finite weighted bilimits.
In \S$\,$\ref{sub:exact} we consider the pair ($\cc{E}l_P$,$\cart_P$) as mentioned in the introduction and 
we prove that the 
$2$-functor \mbox{$\cc{E}l_P \mr{\Diamond_P} \cc{A}$} creates any conical \s-bilimit which exists in $\cc{A}$ and is preserved by $P$ 
(this is a $2$-dimensional version of a known $1$-dimensional result, see \mbox{\cite[Proposition 4.87]{K1}).} 
From this result, together with the equivalence between cones mentioned above, 
it follows that if $\cc{A}$ has finite weighted bilimits and $P$ is left exact, then the pair ($\cc{E}l_P^{op}$, $\cart_P$) is 
$\sigma$-filtered. 
Interestingly enough, finite conical bilimits in $\cc{A}$ do not suffice for this result.
In \S\,\ref{sub:cofinal} we consider \emph{\s-cofinal} $2$-functors and establish some of the usual properties of cofinality that we will use in the proof of our main theorem in Section~\ref{flat}. These properties allow us to show that the canonical $2$-functor $\cc{E}l_P^{op} \mr{} \cc{E}l_{L}^{op}$, where $L$ is a left bi-Kan extension of $P$, is $\sigma$-cofinal in the case considered in the theorem.

\vspace{1ex}

In Section \ref{flat} we consider flat pseudofunctors and we prove our main theorem. In \S\,\ref{sub:flatpseudo} we 
define the \emph{bi-Kan extension} of a pseudofunctor following 
\cite{PKan1}. It is defined by the usual representation that defines Kan extensions suitably relaxed. We focus on the \emph{pointwise} case which holds when the target $2$-category has all weighted bilimits, and prove some basic results on flat pseudofunctors, analogous (but independent since the two notions of flatness are different) to the ones that can be found for a general base category $\cc{V}$ in \mbox{\cite[\S$\,$6]{K3}.} 
In \S$\,$\ref{sub:maintheorems} we state and prove the results mentioned in the introduction, in particular our main theorem (Theorem \ref{th:main}), and in Appendix \ref{appendix} we generalize them to the case of pseudofunctors.

\section{Preliminaries}\label{prelim}

\subsection{Basic terminology} \label{sub:terminology}

Since terminology regarding $2$-dimensional category theory varies in the literature, we list here some definitions and basic results as we will use them in this paper. 

\begin{enumerate}

\item We refer the reader to \cite{KS} for basic notions on 2-categories. 
Size issues are not relevant to us
here, when it is not clear from the context we indicate the smallness condition if it applies.

\item In any 2-category, we use $\circ$ to denote vertical composition and juxtaposition to denote horizontal composition. We consider juxtaposition more binding than ``$\circ$'',
thus $\alpha \beta \circ \gamma $ means $(\alpha \beta ) \circ \gamma$. 
We will abuse notation by writing $f$ instead of $id_f$ for arrows $f$ when there is no risk of confusion.

\item Given any arrow or 2-cell ``$x$'', we use ``$x^*$'', ``$x_*$'' to denote precomposition, postcomposition with ``$x$'' respectively.

\item By $\Cat$ we denote the $2$-category of (small) categories, with functors as morphisms and natural transformations as 2-cells. 

\item For a $2$-category $\A$ and objects $A, B \in \A$, we use the notation $\A(A,B)$ to denote the category whose objects are the morphisms between $A$ and $B$ and whose arrows are the 2-cells between those morphisms.

\item We use $\cong$ to denote isomorphisms and $\approx$ to denote equivalences in a 2-category.

\item\label{it:ff} A $2$-functor $F:\cc{A}\mr{} \cc{B}$ is said to be \mbox{\emph{pseudo-fully-faithful}} if for each $A,\ B\in \cc{A}$, $\cc{A}(A,B) \xr{F_{A,B}} \cc{B}(FA,FB)$ is an equivalence of categories, \emph{2-fully-faithful} if each $F_{A,B}$ is an isomorphism and \mbox{\emph{locally-fully-faithful}} if each $F_{A,B}$ is full and faithful.

\item\label{def:op} For a $2$-category $\A$, $\A^{op}$ denotes the $2$-category with the same objects as $\A$ but with $\A^{op}(A,B)=\A(B,A)$, i.e. we reverse the $1$-cells but not the $2$-cells. We use the notation $\overline{B} \mr{f} \overline{A}$ for the arrow in $\A^{op}$ that corresponds to the arrow $A \mr{f} B$, in $\A$. $2$-cells keep their names.

\item \label{item:co} For a $2$-category $\A$, $\A^{co}$ denotes the $2$-category with the same objects and arrows as $\A$, but with 
$\A^{co}(A,B)=\A(A,B)^{op}$, i.e. we reverse the $2$-cells but not the $1$-cells. 

\item \label{item:isoop} The $2$-category $\Cat$ has a duality $2$-functor $\Cat^{co} \mr{D} \Cat$ that maps each category $C$ to its dual $C^{op}$. Clearly $D$ is an isomorphism of $2$-categories and it is its own inverse.

\item\label{laxoplaxnatural} A lax natural transformation between $2$-functors $\A \mrpair{F}{G} \B$ is a family of morphisms and $2$-cells of $\B$, $\{FA \mr{\theta_A} GA\}_{A \in \A}$,  $\{Gf \theta_A \Mr{\theta_f} \theta_B Ff\}_{A\mr{f}B \in \A}$ satisfying the following equations:

\noindent
\begin{tabular}{rll}
 {\bf LN0}. & For all $A\in \A$, & ${\theta}_{id_A} = \theta_A$. \\
 {\bf LN1}. & For all $A \mr{f} B \mr{g} C \in \A$, & $\theta_{gf} =\theta_g Ff \circ Gg \theta_f$. \\
 {\bf LN2}. & For all $A \cellrd{f}{\gamma}{g} B \in \A$, & $\theta_B F\gamma  \circ \theta_f = \theta_g \circ G\gamma \theta_A$. 
\end{tabular}

An op-lax natural transformation is defined analogously but the structural $2$-cells $\theta_f$ are reversed, i.e. $\theta_B Ff \Mr{\theta_f} Gf \theta_A$.

A modification $\theta \mr{\rho} \theta'$ between lax natural transformations is a family of $2$-cells of $\B$ $\{\theta_A \Mr{\rho_A} \theta'_A\}_{A \in \A}$ such that:

\noindent
\begin{tabular}{rll}
 {\bf LNM}. & For all $A \mr{f} B \in \A$, & $\quad \quad \; \theta'_f \circ Gf \rho_A = \rho_B Ff \circ \theta_f$. 
\end{tabular}

In this way we have a $2$-category $\cc{H}om_{\ell}(\A,\B)$, with arrows the lax natural transformations, and similarly $\cc{H}om_{op \ell }(\A,\B)$.

A pseudonatural transformation is a lax natural transformation where all the $2$-cells $\theta_f$ are invertible, they are the arrows of a $2$-category $\cc{H}om_{p}(\A,\B)$. A strict, or $2$-natural transformation is a lax natural transformation where all the $2$-cells $\theta_f$ are identities, they are the arrows of a $2$-category $\cc{H}om_s(\A,\B)$. We have locally-fully-faithful inclusions
  \begin{equation} \label{eq:inclusionesHomssinsigma}
 \cc{H}om_s(\A,\B) \hookrightarrow \cc{H}om_p(\A,\B) \hookrightarrow \cc{H}om_{\ell}(\A,\B)  
  \end{equation} 
and similarly for $\cc{H}om_{op \ell }(\A,\B)$. 
A pseudonatural equivalence, or pseudo-equivalence for short, is a pseudonatural transformation such that every $\theta_A$ is an equivalence in $\B$. This amounts to $\theta$ being an equivalence in $\cc{H}om_p(\A,\B)$.

\item \label{cartesianHomp}
There is a bijective correspondence between  $2$-functors, where $\gamma$ is either $s$, $p$ or $\ell$:

\vspace{-2ex}
 
$$
\xymatrix@R=0.5ex@C=2ex
   {
     { } & \B  \ar[rrr]^(0.35)F  & & & \cc{H}om_\gamma(\A,\, \C)
    \\
    { } \ar@{-}[rrrrr] & & & & & { }
    \\
     { } & \A  \ar[rrr]^(0.35)G  & & &  \cc{H}om_{op\gamma}(\B,\, \C)
   }
$$

\vspace{-2ex}
 
This correspondence is given by the formulas, for $A \cellrd{f}{\eta}{f'} A'$, 
$B \cellrd{g}{\theta}{g'} B'$: 

$FB(A) = GA(B)$, \mbox{$(Fg)_A = GA(g)$,} $FB(f) = (Gf)_B$, $(Fg)_f = (Gf)_g$, $(F\eta)_A = GA(\eta)$, $FB(\theta) = (G\theta)_B$.  All the verifications are straightforward. 

The expression $H(A, \, B) = FB(A) = GA(B)$ does not determine a $2$-functor of two 
variables, its structure has been studied in \cite[I, 4.1.]{GRAY} under the name of \emph{quasifunctor}. 

\item\label{laxdinat} A lax dinatural transformation $\theta$ between $2$-functors $\A^{op} \times \A \mrpair{F}{G} \B$ is a family of morphisms and $2$-cells of $\B$, $\{F(A,A) \mr{\theta_A} G(A,A)\}_{A \in \A}$,  
\mbox{$\{G(id,f) \theta_A F(f,id) \Mr{\theta_f} G(f,id) \theta_B F(id,f)\}_{A\mr{f}B \in \A}$} satisfying the following equations:

\noindent
\begin{tabular}{rll}
{\bf LD0.} & \!\!\!\!\!\! For all $A \in \A$, & \!\!\! ${\theta}_{id_A} = \theta_A$. \\
{\bf LD1.} & \!\!\!\!\!\! For all $A \mr{f} B \mr{g} C \in \A$, & \!\!\! $\theta_{gf} = G(f,id) \theta_g F(id,f) \circ G(id,g) \theta_f F(g,id)$. \\
{\bf LD2.} & \!\!\!\!\!\! For all $A \cellrd{f}{\gamma}{g} B \in \A$, & \!\!\! $G(\gamma,id) \theta_B F(id,\gamma) \circ \theta_f = \theta_g \circ G(id,\gamma) \theta_A F(\gamma,id)$. 
\end{tabular}

A morphism $\rho$ between two lax dinatural transformations $\theta,\theta'$ is a family of $2$-cells of $\B$, $\{\theta_A \Mr{\rho_A} \theta'_A\}_{A \in \A}$ such that:

\noindent
\begin{tabular}{rll}
 {\bf LDM.} & For all $A \mr{f} B \in \A$, & $\theta'_f \circ G(id,f) \rho_A F(f,id) = G(f,id) \rho_B F(id,f) \circ \theta_f$. 
\end{tabular}

Note that if a pair of $2$-functors $\A \mrpair{F}{G} \B$ are considered as $2$-functors $\A^{op} \times \A \mrpair{\widetilde{F}}{\widetilde{G}} \B$ constant in the first variable, lax dinatural transformations from $\widetilde{F}$ to $\widetilde{G}$ correspond to lax natural transformations from $F$ to $G$, and similarly for their morphisms.

\item \label{item:coco} The construction of item \ref{item:co} defines an isomorphism $\cc{H}om_{\ell}(\A,\B) \mr{(-)^{co}} \cc{H}om_{op \ell }(\A^{co},\B^{co})$.

\item \label{item:pirulo} Combining the previous item with item \ref{item:isoop}, we have an isomorphism of $2$-categories $\cc{H}om_{\ell}(\A,\Cat) \mr{(-)^{co}} \cc{H}om_{op \ell }(\A^{co},\Cat^{co}) \mr{D_*} \cc{H}om_{op \ell }(\A^{co},\Cat)$
that maps a $2$-functor $\A \mr{P} \Cat$ to a $2$-functor that we will denote by $P^{d}$, $P^{d} = DP^{co}$.

\item \label{item:conicalweighted} For a $2$-functor $\A \mr{F} \B$, and an object $E \in \B$ we have isomorphisms of categories 
$$\cc{H}om_{\ell}(\A,\Cat)(k_1, \B(E,F-)) \cong \cc{H}om_{\ell}(\A,\B)(k_E, F)$$ 
$$\cc{H}om_{\ell}(\A^{op},\Cat)(k_1, \B(F-,E)) \cong \cc{H}om_{op \ell }(\A,\B)(F,k_E)$$ 
\noindent where $k_1$ and $k_E$ denote the $2$-functors constant at $1 = \{*\}$, and $E$ respectively.

For $\theta$ in the left side and $\eta$ in the right side, both isomorphisms are given by the formulas $\eta_A=\theta_A(*)$ for $A\in \A$, $\eta_f=(\theta_f)_{*}$ for $A\mr{f} B \in \A$.
\end{enumerate}

\subsection{The $2$-category of elements} \label{sin:prelimgrothconstructions}

 We will make extensive use of the $2$-category of elements $\cc{E}l_P$ of a $\Cat$-valued $2$-functor $P$. 
 $\cc{E}l_P$ can be defined as a particular instance of a \emph{lax comma $2$-category} (\cite[\S$\,$1.4]{Bird}, \cite[\S$\,$I,2.5]{GRAY}), 
 $\cc{E}l_P = [k_1,P]$, 
 and therefore has the universal property of Proposition \ref{prop:pudegammaP} below.

 \begin{\de}\label{def:gammaP}
 Let $\A \mr{P} \Cat$ be a $2$-functor. $\cc{E}l_P$ can be described as follows:
 
 \begin{enumerate}
  \item Objects: Pairs $(x,A)$ with $A \in \A$ and $x \in PA$
  \item Morphisms: A morphism between $(x,A)$ and $(y,B)$ is a pair $(f,\varphi)$ with $A \mr{f} B \in \A$ and $Pf(x) \mr{\varphi} y$
  \item 2-cells: A 2-cell between $(f,\varphi)$ and $(g,\psi)$ (from $(x,A)$ to $(y,B)$) is given by a 2-cell $A\cellrd{f}{\theta}{g}B \in \A$ such that the following diagram commutes in $PB$:
  
  $$\xymatrix{Pf(x) \ar[d]_{(P\theta)_x}  \ar[r]^{\varphi} & y \\
              Pg(x) \ar[ru]_{\psi}   }$$
              
 \item Compositions in this $2$-category are defined as follows: for composable arrows 
  $(f,\varphi)$ and $(g,\psi)$ 
 we have $(g,\psi)(f,\varphi) = (gf,\psi Pg(\varphi))$, and 
both horizontal and vertical composition of $2$-cells are computed in $\A$.
 \end{enumerate}

 We consider the $1$-subcategory $\cart_P$ of $\cc{E}l_P$ whose arrows are $(f,\varphi)$ with $\varphi$ an isomorphism. 
 \end{\de}

\begin{remark} \label{rem:diamondP}
 We note that the canonical projection $\cc{E}l_P \mr{\Diamond_P} \A$ is the opfibration (in the sense of \cite[\S$\,$I,2.5 p.30]{GRAY}) associated to $P$, and the arrows of $\cart_P$ are the cocartesian morphisms of $\cc{E}l_P$. 
\end{remark} 

\begin{\prop} [{\cite[\S$\,$I,2.5 p.29]{GRAY}, \cite[Proposition 1.11]{Bird}}] \label{prop:pudegammaP}
 The following diagram expresses the fact that (together with $\Diamond_P$ and the lax natural transformation $\alpha$ defined by
 $\alpha_{(x,A)} = x$, $\alpha_{(f,\varphi)} = \varphi$), $\cc{E}l_P$ is the lax pull-back of $P$ along the $2$-functor $1 \mr{k_1} \Cat$.
 
 $\hbox{For each } 2 \hbox{-functor } \cc{Z} \mr{F} \A, \hbox{ and each lax natural transformation } k_1 \mr{\theta} PF,$ 
  
 $$\vcenter{\xymatrix{\cc{Z} \ar@/_2ex/[rdd] \ar@/^2ex/[rrd]^F \ar@{.>}[rd]^{\exists ! T} \\
 & \cc{E}l_P \ar[d] \ar[r]^{\Diamond_P} \ar@{}[rd]|{\stackrel{\alpha}{\Rightarrow}} & \A \ar[d]^{P} \\
 & 1 \ar[r]_{k_1} & \Cat}}
 \hbox{ such that } \Diamond_P T = F, \alpha T = \theta.
 $$
 
 The formulas behind this correspondence are, for $Z \cellrd{r}{\beta}{s} W$ in $\cc{Z}$, $T(Z) = (\theta_Z, F(Z))$, $T(r) = (F(r),\theta_r)$, $T(\beta)=F(\beta)$.
 There is also a $2$-categorical part of this universal property that we omit since we will not use it, the reader may consult \cite[Proposition 1.11]{Bird}.  \qed
\end{\prop}

\begin{remark} \label{rem:oplaxdense}
  It is well-known (see \cite[p.180]{S}, or see \cite[Proposition 1.14]{Bird} for a proof) that the projection $\cc{E}l_P \mr{\Diamond_P} \A$ is \textit{lax dense}, in the sense that for each $\A \mr{Q} \Cat$ the pasting composition with $\alpha$ yields an isomorphism of categories 
 
 $$\cc{H}om_{\ell}(\A,\Cat)(P,Q) \cong \cc{H}om_{\ell}(\cc{E}l_P,\Cat)(k_1, Q \Diamond_P).$$
 
 We make explicit the formulas defining this correspondence on objects:
 
 \begin{center}
 \begin{tabular}{ccc}
 $P \Mr{\eta} Q$ lax natural && $k_1 \Mr{\theta} Q \Diamond_P$ lax natural \\
 $\xymatrix{PA \ar[r]^{\eta_A} \ar[d]_{P(f)} \ar@{}[dr]|{\Downarrow \eta_f} & QA \ar[d]^{Q(f)} \\ PB \ar[r]_{\eta_B} & QB}$ &
  & 
 $\xymatrix{1 \ar[r]^{\theta_{(x,A)}} \ar@/_2ex/[rd]_{\theta_{(y,B)}} & QA \ar[d]^{Q(f)}_<<<<{\Downarrow \theta_{(f,\varphi)}} \\ & QB}  $ \\
 $(\eta_f)_x = \theta_{(f,id)}$ & $\eta_A(x) = \theta_{(x,A)}$ & $\theta_{(f,\varphi)} = \eta_B(\varphi) (\eta_f)_x$
 \end{tabular}
 \end{center}
\end{remark}

\begin{sinnadastandard} \label{sin:Tsubeta}
Combining Proposition \ref{prop:pudegammaP} and Remark \ref{rem:oplaxdense} we have, for each lax natural transformation $P \Mr{\eta} Q$ between $\Cat$-valued $2$-functors, an induced $2$-functor $\cc{E}l_P \mr{T_{\eta}} \cc{E}l_Q$ given by the formulas 
$$ 
T_{\eta}(x,A) = (\eta_A(x),A), \quad T_{\eta}(f,\varphi) = (f, \eta_B(\varphi) (\eta_f)_x), \quad T_\eta(\theta)=\theta.
$$
We note (see \cite[Theorem 1.15]{Bird}), although we will not need this result, that this assignment actually defines a $2$-fully-faithful $2$-functor $\cc{H}om_{\ell}(\A,\Cat) \mr{} (2\hbox{-}\Cat/\A)$.
\end{sinnadastandard}

\begin{sinnadastandard} \label{sin:TsubH}
 Consider now $2$-functors $\A \mr{H} \B \mr{P} \Cat$. By the pasting lemma for lax pull-backs, we may construct the lax pull-back defining $\cc{E}l_{PH}$ by pasting a (strict) $2$-pull-back to the lax pull-back defining $\cc{E}l_P$ as in the diagram below:
 
 $$\vcenter{\xymatrix{ \cc{E}l_{PH} \ar[d]_{T_H} \ar[r]^{\Diamond_{PH}} & \A \ar[d]^H \\
 \cc{E}l_P \ar[d] \ar[r]^{\Diamond_P} \ar@{}[rd]|{\stackrel{\alpha_P}{\Rightarrow}} & \B \ar[d]^{P} \\
 1 \ar[r]_{k_1} & \Cat}}$$
 
 \noindent Then we have an induced $2$-functor $\cc{E}l_{PH} \mr{T_H} \cc{E}l_P$ that is given by the formulas 
 $$
 T_{H}(x,A) = (x,HA), \quad T_{H}(f,\varphi) = (Hf,\varphi), \quad T_H(\theta)=H\theta.
$$
\end{sinnadastandard}

\section{$\sigma$-limits}\label{sigmal}

We fix throughout this section a $1$-subcategory $\Sigma$ of a $2$-category $\A$ which contains all the objects (this is often called a {\em wide} subcategory). We introduce a new class of natural transformations that we call
\emph{sigma natural}, and denote \emph{$\sigma$-natural.} We introduce the use of a symbol $\sigma$ accompanying a concept, it is convenient to think that $\sigma$ means that the concept is to be taken ``relative to the arrows of $\Sigma$''. 
Whenever possible, we will omit $\Sigma$ from the notation.

\subsection{\s-natural transformations} \label{sub:sigmanatural}

\begin{\de} \label{def:cnatural}
 Given $2$-functors $\A \mrpair{F}{G} \B$, a $\sigma$-natural transformation $F \Mr{\theta} G$ (with respect to $\Sigma$) is a lax natural transformation such that, if $A \mr{f} A'$ is in $\Sigma$, the structural \mbox{$2$-cell} 
 $\theta_f$ (see \S$\,$\ref{sub:terminology}, item \ref{laxoplaxnatural})
   is invertible. There is a $2$-category $\cc{H}om_{\sigma}^{\Sigma}(\A,\B)$ with objects the $2$-functors from $\A$ to $\B$, whose arrows are the $\sigma$-natural transformations and whose $2$-cells are 
   all the modifications between them. We have locally-fully-faithful inclusions (see \eqref{eq:inclusionesHomssinsigma})
  
  \begin{equation} \label{eq:inclusionesHoms}
 \cc{H}om_s(\A,\B) \hookrightarrow \cc{H}om_p(\A,\B) \stackrel{(1)}{\hookrightarrow} \cc{H}om_{\sigma}(\A,\B) \stackrel{(2)}{\hookrightarrow} \cc{H}om_{\ell}(\A,\B).  
  \end{equation}
  
  \noindent Note that if $\Sigma'$ is another $1$-subcategory of $\A$ and $\Sigma \subseteq \Sigma'$ then 
   $\cc{H}om_{\sigma}^{\Sigma'}(\A,\B) \hookrightarrow \cc{H}om_{\sigma}^{\Sigma}(\A,\B)$.
 \end{\de}

We recall that $\sigma$-natural transformations were already considered by J. W. Gray in \mbox{\cite[\S$\,$I,2 p.13,14]{GRAY}.} What we denote by $\cc{H}om_\sigma^{\Sigma}(\A,\B)$ is, in Gray's notation, $Fun(\A,\Sigma;\B,iso \B)$.

\begin{remark} \label{rem:cmataplyepsilon}
 Consider a $2$-category $\A$, its underlying category $\A_0$ and the $1$-subcategory $\A_{id}$ consisting only of the identities.  
 Then, in \eqref{eq:inclusionesHoms}, $(1)$ is an equality if $\Sigma = \A_0$, and $(2)$ is an equality if $\Sigma = \A_{id}$.
 \cqd 
\end{remark} 
Observe that the items 
\ref{item:coco}, \ref{item:pirulo} and \ref{item:conicalweighted} in \S$\,$\ref{sub:terminology} hold with the same proof for general $\sigma$  and $op\sigma$-natural transformations, the latter being defined in an evident way. \cqd  
\begin{notation} \label{not:eps}
  Even though in this paper we will work mainly with $\sigma$-limits, in order to avoid repeating statements that hold for the $\sigma$-case and the strict $s$-case, we will use a letter $\eps$, that can stand for both $s$ and $\sigma$ (then also by Remark \ref{rem:cmataplyepsilon} for $p$ and $\ell$). This allows for a unified treatment of many results which are known for strict, pseudo and lax natural transformations.
\end{notation}

\subsection{$\eps$-limits} \label{sub:epslimits}

\begin{\de} \label{de:NE} 
Given $2$-functors $\A \mr{W} \Cat$, $\A \mr{F} \B$, and $E$ an object of $\B$, we denote 
$Cones_{\eps}^W(E,F) = \cc{H}om_{\eps}(\A,\Cat)(W,\B(E,F-))$. This is the category of $w$-$\eps$-cones (with respect to the weight $W$) for $F$ with vertex $E$. 
For a $w$-$\eps$-cone $\xi$ with vertex $E$, 
$\xymatrix@C=3ex{W \ar@{=>}[r]^>>>>{\xi} & \B(E,F-)}$,  
we have a functor $\theta_B = \xi^*$ given by precomposition with $\xi$:

\vspace{-1ex}

\begin{equation} \label{eq:ssss}
 \B(B, E ) \mr{\theta_B} \cc{H}om_{\eps}(\A,\Cat)(W,\B(B,F-))
 \end{equation}
 
 \vspace{-2ex}
 
$$ B \cellrd{f}{\alpha}{g} E \quad \longmapsto \quad W \Mr{\xi} \B(E,F-) \cellrd{f^*}{\alpha^*}{g^*} \B(B,F-) $$

\noindent
The $\eps$-limit of $F$ weighted by $W$, denoted $\{W,F\}_\eps$ or more precisely $(\{W,F\}_\eps, \xi)$, is a $w$-$\eps$-cone $\xi$ 
with vertex $E = \{W,F\}_\eps$, universal in the sense that 
$\theta_B = \xi^*$ in 
\eqref{eq:ssss} is an isomorphism.

As usual, an equivalent formulation of the universal property is that there is a representation $\theta_B$ natural in the variable $B$ (as in \eqref{eq:ssss}), 
and $\xi$ is recovered setting $B = E$, $\xi = \theta_E(id_E)$.
\end{\de}
It is convenient to give an explicit definition of the dual concept, in the notation of Definition \ref{de:NE}:
\begin{definition} \label{sub:epscolimits}
$\eps$-colimits $W \otimes_{\eps} F$ in $\B$ are the corresponding limits in $\B^{op}$; for $\A^{op} \mr{W} \Cat$, $\A \mr{F} \B$ 
we denote $Cones_{\eps}^W(F,E) = \cc{H}om_{\eps}(\A^{op},\Cat)(W,\B(F-,E))$ and refer to the objects of this category also as $w$-$\eps$-cones, as it is clear from the context which $w$-$\eps$-cones we are referring to. 
The $\eps$-colimit of $F$ weighted by $W$, denoted \mbox{$W \otimes_\eps F$} or more precisely $(W \otimes_\eps F, \nu)$, is a $w$-$\eps$-cone $\nu$ with vertex $E = W \otimes_\eps F$, 
$\xymatrix@C=3ex{W \ar@{=>}[r]^>>>>{\nu} & \B(F-,B)}$ universal in the sense that the functor $\theta_B = \nu^*$ given by precomposing with $\nu$, 
  \begin{equation} \label{eq:defcpseudocolimconnu}
  \B(E, B) \mr{\theta_B} \cc{H}om_{\eps}(\A^{op},\Cat)(W,\B(F-,B))
 \end{equation}
 \noindent is an isomorphism.
\end{definition}
\begin{remark} \label{rem:sinnombre}
 Considering $\eps = s$, we recover the notion of strict weighted limit (\cite[\S$\,$2]{K2}).
   Considering $\eps = \sigma$, $\Sigma = \A_0$ and $\Sigma = \A_{id}$, we recover the notions of weighted lax and pseudolimits (\cite[\S$\,$5]{K2}). In spite of this notation, the reader should be aware that $s$-limits are not $\sigma$-limits, as it is the case for weighted lax and pseudolimits. 
    
  The general concept of $\sigma$-limit for an arbitrary $1$-subcategory $\Sigma$ is an essential tool to work with the notion of flat $2$-functor, and we will consider in this paper $\sigma$-limits that are neither lax nor pseudolimits.
\end{remark}

\begin{remark} \label{rem:4en1}
 We also consider (but omit to write explicitly) analogous statements for op$\sigma$-natural transformations, thus defining op$\sigma$-limits. 
 Recall \S$\,$\ref{sub:terminology}, item \ref{item:pirulo}. Every $\sigma$-limit in $\B$ is an op$\sigma$-limit in $\B^{co}$, and vice versa. If $\A \mr{W} \Cat$, $\A \mr{F} \B$, then \mbox{$\{W,F\}_{\sigma} = \{W^{d},F^{co}\}_{op \sigma }$.} See \cite[Proposition 1.5]{Bird} for a proof for lax natural transformations, that can be easily adapted to a general $\sigma$.
 
 Therefore one can think there is only one main or ``primitive'' notion between the four possible choices in (op)$\sigma$-(co)limits, and the other three can be obtained from that one. Then, as it is usual in the literature, we can state and prove general results for $\sigma$-limits, and use them for any of the four choices mentioned above.
\end{remark}

\begin{remark} \label{rem:defbi}
If the universal property in Definition \ref{de:NE} is taken in the weak sense (equivalence instead of isomorphism) we have the notion of $\sigma$-bilimit ${}_{bi} \! \{W,F\}_{\sigma}$ 
(and $\sigma$-bicolimit $W \;{}_{bi} \! \otimes_{\sigma} F$). 
Clearly, any $\sigma$-limit is in particular a $\sigma$-bilimit. 
Note however that the defining universal properties characterize $\sigma$-bilimits up to equivalence and $\sigma$-limits up to isomorphism. We abuse nevertheless the language by referring to ``the'' $\sigma$-bilimit, or ``the'' $\sigma$-limit, and use equalities to express that a certain object satisfies the corresponding universal property, but it is important to be aware that for a given data, both a $\sigma$-limit and a $\sigma$-bilimit 
may be constructed independently, and they will be equivalent objects, but \emph{not necessarily isomorphic}.
\end{remark}

As in \cite[(2.5), (5.5)]{K2} (see Remark \ref{rem:limitsenCat} below for a proof) we have the basic result:
\begin{proposition} \label{cathas}
The 2-category $\cc{C}at$ has all (small) weighted $\varepsilon$-limits. In fact, given \hbox{$\cc{A} \mr{W} \cc{C}at$, $\cc{A} \mr{P} \cc{C}at$,}
$\{W, P\}_\varepsilon = \cc{H}om_\varepsilon(\cc{A}, \cc{C}at)(W, P)$.
\cqd
\end{proposition}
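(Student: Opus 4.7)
The plan is to exhibit the candidate $\eps$-limit $E := \cc{H}om_{\eps}(\A,\Cat)(W,P)$ together with a canonical $w$-$\eps$-cone, and then verify the universal property of Definition \ref{de:NE} using the cartesian closed structure of $\Cat$. Recall that the objects of $E$ are $\eps$-natural transformations $\theta\colon W\Rightarrow P$ and its morphisms are modifications between them.

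First I would define a tautological $w$-$\eps$-cone $\xi\colon W\Rightarrow\Cat(E,P-)$. For $A\in\A$ the functor $\xi_A\colon WA\to\Cat(E,PA)$ is the transpose under the cartesian closed structure of $\Cat$ of the evaluation $E\times WA\to PA$, $(\theta,w)\mapsto\theta_A(w)$; on a modification $\rho\colon\theta\Rightarrow\theta'$ and $w\in WA$ it takes the value $(\rho_A)_w$. For each $f\colon A\to A'$ in $\A$ the structural $2$-cell $\xi_f\colon\Cat(E,Pf)\,\xi_A\Rightarrow\xi_{A'}Wf$ is defined pointwise by $(\xi_f)_{\theta,w}:=(\theta_f)_w$. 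The lax-naturality axioms LN0--LN2 for $\xi$ and the $\sigma$-invertibility of $\xi_f$ for $f\in\Sigma$ hold because they hold pointwise in each $\theta\in E$, which is itself an $\eps$-natural transformation.

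Second, the universal property \eqref{eq:ssss} asserts that the precomposition functor
\[\xi^*\colon \Cat(B,E)\longrightarrow \cc{H}om_{\eps}(\A,\Cat)\bigl(W,\,\Cat(B,P-)\bigr)\]
is an isomorphism of categories for every (small) $B$. I would write the inverse explicitly: to an $\eps$-natural transformation $\zeta\colon W\Rightarrow\Cat(B,P-)$ associate the functor $B\to E$ sending $b$ to the $\eps$-natural transformation $\zeta^b\colon W\Rightarrow P$ with components $\zeta^b_A(w):=\zeta_A(w)(b)$ and structural $2$-cells $(\zeta^b_f)_w:=((\zeta_f)_w)_b$, and sending a morphism $\beta\colon b\to b'$ in $B$ to the modification $\zeta^\beta\colon\zeta^b\Rightarrow\zeta^{b'}$ with components $(\zeta^\beta_A)_w:=\zeta_A(w)(\beta)$.

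The main obstacle, though not a deep one, is the bookkeeping required to check that (i) the two assignments are mutually inverse; (ii) the lax-naturality axioms LN0--LN2 for $\zeta$ translate exactly into those for each $\zeta^b$ together with the modification axiom LNM for the transitions along $\beta$; and (iii) the $\sigma$-invertibility of $\zeta_f$ corresponds to that of $\zeta^b_f$ for every $b\in B$, using the fact that a natural transformation of $\Cat$-valued functors is invertible iff it is pointwise invertible. Each such step is a coherent application of the adjunction $\Cat(B\times X,Y)\cong\Cat(X,\Cat(B,Y))$ at each $A\in\A$ together with the pointwise description of modifications in $\cc{H}om_\eps(\A,\Cat)$.
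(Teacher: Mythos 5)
Your proof is correct, but it takes a different route from the paper's. The paper does not prove Proposition \ref{cathas} on the spot: it defers the argument to the end-and-cotensor calculus of \S\,\ref{sub:epsends}, namely Remark \ref{rem:limitsenCat}, which chains $\{W,P\}_\eps = \eps\!\int_A \Cat(WA,PA) = \cc{H}om_\eps(\A,\Cat)(W,P)$ using Proposition \ref{coro:endcomoweighted} (weighted $\eps$-limits as $\eps$-ends of cotensors) and Proposition \ref{prop:cnatcomocend} (the $\eps$-end of the hom $2$-functor is the $\cc{H}om_\eps$ category). You instead exhibit the representing object directly, writing down the tautological cone $\xi$ and an explicit inverse to $\xi^*$ via the exponential adjunction $\Cat(B\times X,Y)\cong\Cat(X,\Cat(B,Y))$; your verifications (i)--(iii), in particular that $\sigma$-invertibility of $\zeta_f$ is equivalent to that of every $\zeta^b_f$ by pointwise invertibility, are exactly the points that need checking, and they all go through. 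What each approach buys: yours is self-contained and could sit where the proposition is stated, with no forward reference and no need for the notions of $\eps$-dinaturality, $\eps$-end or cotensor; the paper's factorization front-loads that machinery (which it needs anyway for Propositions \ref{coro:endcomoweighted}, \ref{coro:coendcomoweighted} and the pointwise-limit results) so that the $\Cat$-valued case becomes a two-line consequence, and it isolates the genuinely new computation in Proposition \ref{prop:cnatcomocend}, whose universal dicone $\pi_A(\theta)=\theta_A$, $(\pi_f)_\theta=\theta_f$ is essentially the uncurried form of your cone $\xi$.
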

As an immediate corollary, it follows that representable $2$-functors preserve weighted \mbox{$\varepsilon$-limits.} That is, they ``come out of the second variable''. More precisely:

\begin{corollary} \label{comeout}
 Let $\A \mr{W} \Cat$, $\A \mr{F} \B$ be $2$-functors, then we have the following isomorphism (equivalence), $2$-natural in the variable $B$:
$$
\cc{B}(B, \{W, F\}_\varepsilon ) \mr{\cong} 
                                 \{W, \cc{B}(B, F-)\}_\varepsilon
,\;\;\;\;
\cc{B}(B, {}_{bi} \!\{W, F\}_\varepsilon ) \mr{\approx} 
                       {}_{bi} \!\{W, \cc{B}(B, F-)\}_\varepsilon
$$
\end{corollary}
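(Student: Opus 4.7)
The plan is to chain two isomorphisms that are already in hand. By Definition \ref{de:NE}, the universal property of $\{W,F\}_\eps$ with vertex $E := \{W,F\}_\eps$ says precisely that, for every $B \in \cc{B}$, precomposition with the universal $w$-$\eps$-cone $\xi$ yields an isomorphism of categories
$$
\xi^{*} \colon \cc{B}(B, E) \xrightarrow{\cong} \cc{H}om_\eps(\cc{A}, \cc{C}at)\bigl(W,\, \cc{B}(B, F-)\bigr).
$$
On the other hand, $\cc{B}(B, F-)$ is itself a $\cc{C}at$-valued $2$-functor, so Proposition \ref{cathas} applied to it gives the equality
$$
\{W,\, \cc{B}(B, F-)\}_\eps \;=\; \cc{H}om_\eps(\cc{A}, \cc{C}at)\bigl(W,\, \cc{B}(B, F-)\bigr).
$$
Composing these two identifications produces the desired isomorphism.

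For the claimed $2$-naturality in $B$, I would observe that both sides are $2$-functorial in $B$ via precomposition: on the left via the contravariant action of $\cc{B}(-, E)$, and on the right via the action $B \mapsto \cc{H}om_\eps(\cc{A},\cc{C}at)(W,\cc{B}(B, F-))$ induced by postcomposition of cones by morphisms $\cc{B}(B, Fa) \to \cc{B}(B', Fa)$ for $B' \to B$. The isomorphism is precisely $\xi^{*}$, which by its definition in \eqref{eq:ssss} is precomposition with $\xi$; since precomposition and postcomposition commute strictly at the level of Hom-categories, compatibility with the $2$-functorial structure in $B$ (including the action on $2$-cells) is automatic.

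For the bilimit version, the same argument works verbatim but with isomorphisms replaced by equivalences throughout. The weak form of Definition \ref{de:NE} gives an equivalence $\cc{B}(B,\, {}_{bi}\!\{W,F\}_\sigma) \approx \cc{H}om_\sigma(\cc{A}, \cc{C}at)(W, \cc{B}(B, F-))$, and Proposition \ref{cathas} (which is a strict equality, hence also an equivalence) identifies the right-hand side with ${}_{bi}\!\{W, \cc{B}(B, F-)\}_\sigma$; the composite is then pseudonatural in $B$ by the same compatibility of pre- and postcomposition.

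The only genuine bookkeeping point is verifying that the resulting isomorphism on the nose, rather than just the bijection on objects, respects composition and $2$-cells in $B$; this is routine from the functoriality of the Hom-categories $\cc{B}(-,-)$ and of $\cc{H}om_\eps(\cc{A},\cc{C}at)(W,-)$, and does not require any new construction beyond Definition \ref{de:NE} and Proposition \ref{cathas}.
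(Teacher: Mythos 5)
Your proof is correct and is essentially the paper's own argument: the paper's proof reads in its entirety ``Consider $P = \B(B,F-)$ in Proposition \ref{cathas} and the Definition \ref{de:NE} of $\eps$-limit,'' i.e.\ exactly the chaining of $\xi^*$ with the identification $\{W,\B(B,F-)\}_\eps = \cc{H}om_\eps(\A,\Cat)(W,\B(B,F-))$ that you describe, with the bilimit case handled analogously. Your additional remarks on $2$-naturality in $B$ are a harmless elaboration of what the paper leaves implicit.
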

\begin{proof}
Consider $P = \B(B,F-)$ in Proposition \ref{cathas} and the Definition \ref{de:NE} of $\eps$-limit. The case of $\eps$-bilimits is analogous.
\end{proof}

It is well known (\cite[(3.11)]{K1}) that weighted strict limits behave functorially both in the weight and the argument. 
Here we establish the fact that $\eps$-limits (recall that $\eps$ stands for $\sigma$ or $s$) behave functorially respect to any natural transformation \emph{stronger} than $\eps$-natural, more precisely:

\begin{notation} \label{epsilonnotation}
 Let $\A$ be any $2$-category. Consider the set $\Lbl_\A$ consisting of the label $s$ and one label $\sigma^\Sigma$ for each \mbox{$1$-subcategory} $\Sigma$ of $\A$. Note that in particular we have labels that we denote $p = \sigma^{\A_{id}}$, $\ell = \sigma^{\A_0}$ (see Remark \ref{rem:cmataplyepsilon}). Consider the order in $\Lbl_\A$ induced by the inclusions in \eqref{eq:inclusionesHoms}, that is $s \leq \sigma^\Sigma$ for every $\Sigma$, and $\sigma^{\Sigma'} \leq \sigma^\Sigma$ if $\Sigma \subseteq \Sigma'$.
 
 Note that if we are considering only one $1$-subcategory $\Sigma$, and omit it from the notation, we have $s \leq p \leq \sigma \leq \ell$ (cf. \eqref{eq:inclusionesHoms}).
\end{notation}

\begin{remark} \label{rem:limitsfuntorial}
 
  Let $\alpha,\beta \in \Lbl_\A$, if $\alpha \leq \beta$ then weighted $\beta$-limits behave functorially respect to $\alpha$-natural transformations. That is:
 
 Let $\A \cellrd{V}{\theta}{W} \Cat, \quad \A \cellrd{F}{\eta}{G} \C$ be $\alpha$-natural transformations, by 
 \eqref{eq:ssss}, we have
 $$\vcenter{\xymatrix@C=2ex{V \ar@{=>}[d]_{\theta} \ar@{=>}[r]^>>>{\xi} & \C(\{V,F\}_{\beta},F-) \ar@{=>}[d]^{f^*} & \{V,F\}_{\beta} \\
 W \ar@{=>}[r]^>>>{\xi} & \C(\{W,F\}_{\beta},F-) & \{W,F\}_{\beta} \ar[u]^{\exists ! f} }}, \quad
 \vcenter{\xymatrix@C=2.5ex{W \ar@{=>}[d]_{\xi} \ar@{=>}[r]^>>>>>>>>>{\xi} & \C(\{W,G\}_{\beta},G-) \ar@{=>}[d]^{g^*} & \{W,G\}_{\beta} \\
 \C(\{W,F\}_{\beta},F-) \ar@{=>}[r]^{\eta_*} & \C(\{W,F\}_{\beta},G-) & \{W,F\}_{\beta} \ar[u]^{\exists ! g} }}$$
 A standard line of reasoning by the uniqueness in the universal properties yields that these constructions define $2$-functors
 $$(\cc{H}om_{\alpha}(\A,\Cat)^{op})_+ \xr{\{-,F\}_{\beta}} \C, \quad (\cc{H}om_{\alpha}(\A,\C))_+ \xr{\{W,-\}_{\beta}} \C,$$
 $$(\cc{H}om_{\alpha}(\A,\Cat)^{op} \times \cc{H}om_{\alpha}(\A,\C))_+ \xr{\{-,-\}_{\beta}} \C,$$
 \noindent where the subscript ``$+$'' indicates the full-subcategories with objects such that the corresponding $\beta$-limits exist.
\end{remark}

\subsection{$\eps$-ends and cotensors} \label{sub:epsends}

{\bf $\eps$-ends and $\eps$-coends}.
 
The relation of strict ends (coends) of $2$-functors 
$\A^{op} \times \A \mr{T} \B$ with weighted limits (colimits) is well understood, they are given by the weight $\A^{op} \times \A \xr{\cc{A}(-,\,-)} \cc{C}at$ 
($\A \times \A^{op} \xr{\cc{A}^{op}(-,\,-)} \cc{C}at$), see \cite[3.10]{K1}, \cite[5.2.2]{W}. However for general $\sigma$ the situation is not at all the same. Some particular cases have been considered, for example, in \mbox{\cite[9.6]{PKan1}} the pseudoend of a 
$\cc{C}at$ valued 2-functor, which requires the explicit construction of weighted pseudolimits in $\cc{C}at$, in \cite[5.3]{W} the lax coend of a $\cc{C}at$  valued 2-functor of the form 
$\A^{op} \times \A \xr{S \times T} \cc{C}at$, which requires a non-trivial change in the weight.

We will now define the $\eps$-end of a $2$-functor $\A^{op} \times \A \mr{T} \B$ (recall Notation \ref{not:eps}). The notion of $\eps$-dinatural transformation is obtained, for $\eps = \sigma$, by the requirement of invertibility on the $\theta_f$ for $f \in \Sigma$ in \S\,\ref{sub:terminology}, item \ref{laxdinat}. The case 
$\eps = s$ yields the notion of \emph{strict} dinaturality, that corresponds to $\cc{V}$-naturality when $\cc{V} = \Cat$, 
\mbox{\cite[I.3.1, I.3.5]{D},} \cite[\S$\,$2.1, \S$\,$3.10]{K1}.

\begin{\de} \label{def:cpseudocoend} 
 Let $T: \cc{A}^{op} \times \cc{A} \mr{} \cc{B}$ be a $2$-functor and $E \in \cc{B}$. 
A \emph{$\eps$-dicone} $\theta$ (with respect to $\Sigma$) for $T$ with vertex $E$ is a $\eps$-dinatural transformation from the $2$-functor which is constant at $E$ to $T$. This amounts to a lax dicone given by a family of morphisms 
\mbox{$\{E \mr{\theta_{A}} T(A,A) \}_{A \in \cc{A}}$} and a family of 2-cells \mbox{$\{T(A,f) \theta_{A} \Mr{\theta_f} T(f,B) \theta_{B} \}_{A \mr{f} B \in \cc{A}}$} such that:
\begin{enumerate}
 \item If $\eps = \sigma$, $\theta_f$ is invertible for every $f$ in $\Sigma$.
 \item If $\eps = s$, $\theta_f = id$ for every $f$.
\end{enumerate}
For each $E$, $\eps$-dicones with vertex $E$ form a category $Dicones_\eps(E,T)$, whose arrows are the morphisms as lax dicones.  

The \emph{$\eps$-end} in $\B$ (with respect to $\Sigma$) of the 2-functor $T$ is the universal $\eps$-dicone, denoted 
$$
\displaystyle \{ \eps \!\! \int_{A} T(A,A) \mr{\pi_A}  T(A,A) \}_{A\in \cc{A}}, \hspace{4ex} 
\{T(A,f) \pi_{A} \Mr{\pi_f} T(f,B) \pi_{B}\}_{A \mr{f} B \in \cc{A}}
\;.
$$ 
It is universal in the sense that for each $E \in \cc{B}$ postcomposition with $\pi$ yields an isomorphism of categories
\begin{equation} \label{eq:defcpseudoend}
\cc{B}(E, \eps \!\! \int_{A} T(A,A) ) \mr{\pi_*} Dicones_\eps(E,T) 
\end{equation}
\end{\de}
\begin{proposition} \label{def:cpseudocoendprop}
 For the $2$-functor $\A^{op} \times \A \xr{\B(E,T(-,-))} \Cat$, there is an obvious $\eps$-dicone with vertex $Dicones_{\eps}(E,T)$. It can be checked that it is universal, therefore there is an isomorphism of categories 
 \begin{equation}  \label{eq:diconoscomoend}
Dicones_\eps(E,T) \mr{\cong} \eps \!\! \int_{A}{\cc{B}(E,T(A,A))}  
 \end{equation}
As usual, then, the universal property \eqref{eq:defcpseudoend} defining $\eps \!\! \int_{A} T(A,A)$ is equivalent to stating that there is an isomorphism of categories 
 \begin{equation} \label{eq:isoparaends}
\cc{B}(E, \eps \!\! \int_{A} T(A,A))  \mr{\cong} \eps \!\! \int_{A}{\cc{B}(E,T(A,A))}  
 \end{equation}
 \noindent commuting with the $\eps$-dicones. 
 \cqd
\end{proposition}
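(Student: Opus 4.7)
The plan is to construct the claimed universal $\eps$-dicone explicitly, verify the dicone axioms, and then read off universality tautologically; the second displayed isomorphism will follow by combining the first with the defining universal property of the $\eps$-end.

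First I would define the candidate $\eps$-dicone $\pi$ for $\A^{op}\times\A\xr{\B(E,T(-,-))}\Cat$ with vertex $Dicones_\eps(E,T)$. For each $A\in\A$, let $Dicones_\eps(E,T)\xr{\pi_A}\B(E,T(A,A))$ be the evaluation functor, sending a $\eps$-dicone $\theta$ to $\theta_A$ and a morphism $\rho$ of $\eps$-dicones to its $A$-component $\rho_A$. For each $A\mr{f}B$, define the structural $2$-cell $T(A,f)_{*}\pi_A\Mr{\pi_f}T(f,B)_{*}\pi_B$ componentwise by $(\pi_f)_\theta=\theta_f$. The axiom \textbf{LDM} for morphisms of dicones is exactly what makes $\pi_f$ natural in $\theta$.

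Next I would check that $\pi$ is an $\eps$-dicone. Axioms \textbf{LD0}, \textbf{LD1} and \textbf{LD2} reduce componentwise (in $\theta$) to the corresponding axioms for $\theta$ itself, so they hold automatically. For the $\eps$-clause: if $\eps=\sigma$ and $f\in\Sigma$, then $\theta_f$ is invertible for every $\eps$-dicone $\theta$, so $\pi_f$ is invertible; if $\eps=s$, then $\theta_f=id$ for every strict dicone $\theta$, so $\pi_f=id$.

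For universality, given any $\eps$-dicone $\psi$ with vertex a category $D$ for $\B(E,T(-,-))$, I would define a functor $\widehat\psi\colon D\to Dicones_\eps(E,T)$ by sending $d\in D$ to the family $\{\psi_A(d)\}_A$ equipped with $2$-cells $\{(\psi_f)_d\}_f$, and sending an arrow $d\mr{u}d'$ to the modification whose $A$-component is $\psi_A(u)$. The dicone axioms for $\widehat\psi(d)$ come directly from those of $\psi$ evaluated at $d$, the invertibility/identity clauses transfer, and the modification axiom for $\widehat\psi(u)$ comes from the naturality of each $\psi_f$ in the $D$-variable. By construction $\pi_{*}\widehat\psi=\psi$, and any functor with this property must agree with $\widehat\psi$ objectwise and arrowwise, so $\widehat\psi$ is unique. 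This establishes the first isomorphism \eqref{eq:diconoscomoend}.

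For the second isomorphism \eqref{eq:isoparaends}, I would simply paste \eqref{eq:diconoscomoend} with the defining universal property \eqref{eq:defcpseudoend} of $\eps\!\!\int_A T(A,A)$, applied to the vertex $E$, and note that both isomorphisms are natural in $E$ and commute with the $\eps$-dicones. The only mildly delicate point in the whole argument is bookkeeping the different roles the letter $\theta$ plays (as an object of $Dicones_\eps(E,T)$ and as a $\Cat$-valued dicone); beyond that the verifications are entirely routine, which is consistent with the statement that ``it can be checked'' in the proposition.
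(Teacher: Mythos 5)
Your construction is correct and is exactly the one the paper intends: the paper leaves this proposition as ``obvious''/``can be checked'' with no written proof, and the evaluation projections $\pi_A(\theta)=\theta_A$, $(\pi_f)_\theta=\theta_f$ you use are the same ones the paper writes out explicitly in its proof of Proposition \ref{prop:cnatcomocend}. The only detail you wave at rather than state is the arrow-level part of universality (that morphisms of dicones into $\B(E,T(-,-))$ correspond to natural transformations into $Dicones_\eps(E,T)$), which is indeed routine via \textbf{LDM}.
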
 
It is convenient to have at hand the explicit definition of the dual concept $\eps$-coend. 
\begin{definition} \label{def:cpseudocoendco}  $\eps$-coends are defined as $\eps$-ends in $\B^{op}$, for $T: \cc{A}^{op} \times \cc{A} \mr{} \cc{B}$ we define $\displaystyle \eps \!\! \int^{A} T(A,A) = \eps \!\! \int_{A} T^{op}(A,A)$, and we denote the universal $\eps$-dicone by 
$$\displaystyle \{T(A,A)\mr{\lambda_A} \eps \!\! \int^{A} T(A,A)\}_{A\in \cc{A}}, \hspace{4ex} \{\lambda_{B} T(B,f) \Mr{\lambda_f} \lambda_{A} T(f,A) \}_{A \mr{f} B \in \cc{A}}\;.
$$
\end{definition}
 A argument dual to the one  given for \eqref{eq:isoparaends} proves that the universal property defining $\eps \int^{A} T(A,A)$ can be stated as
  \begin{equation} \label{eq:isoparacoends}
 \cc{B}(\eps \!\! \int^{A} T(A,A),E)  \mr{\cong} \eps \!\! \int_{A}{\cc{B}(T(A,A),E)}
 \end{equation}

\vspace{1ex}
 
 We denote the weak concept of \emph{$\sigma$-biend} of $T$, where 
 $\pi^*$ in Definition \ref{def:cpseudocoend} is required to be only an equivalence, by $\displaystyle \sigma \! \oint_{A} T(A,A)$.

\vspace{1ex}

The following key formula remains valid for the $\cc{H}om_\eps$ categories:
\begin{\prop} \label{prop:cnatcomocend}
 For $2$-functors $P,Q: \A \mr{} \B$, we have the formula
$$
 \cc{H}om_\eps(\cc{A},\cc{B})(P,Q) = \eps \!\! \int_{A} \cc{B}(PA,QA)
$$
\end{\prop}

\begin{proof}
 It can be readily checked (by a straightforward but necessary argument) that the following data defines a universal $\eps$-dicone with vertex $\cc{H}om_\eps(\cc{A},\cc{B})(P,Q)$. 
 Projections are given by $\pi_A(\theta)=\theta_A$ for $\eps$-natural transformations $\theta$, $\pi_A(\rho)=\rho_A$ for modifications $\rho$. And the structural $2$-cells of the $\eps$-cone are given by $(\pi_f)_{\theta}=\theta_f: Qf \theta_A \Mr{} \theta_B Pf$ for $A \mr{f} B \in \A$, $\theta \in \cc{H}om_\eps(\cc{A},\cc{B})(P,Q)$. 
\end{proof}

\vspace{1ex}

\noindent {\bf Tensors and cotensors.}

\begin{definition} \label{rem:cotensors}
 $\eps$-cotensor (resp. $\eps$-tensor) are $\eps$-limits (resp. $\eps$-colimits) with $\A = 1$. In this case the choice of $\eps$ is irrelevant, since $\cc{H}om_\eps(1,\Cat)=\cc{H}om_s(1,\Cat)$ for any choice of $\eps$. We identify $1 \mr{C} \Cat$ with $C \in \Cat$, $1 \mr{B} \B$ with $B \in \B$ (note that $1^{op} = 1$), and denote cotensor products by $\{C,B\}$ and tensor products by $C \otimes B$. (see also \cite[(3.1)]{K2}). 
\end{definition}
 
Recall that in the base $2$-category $\cc{C}at$, cotensors and tensors are given by the internal hom and the cartesian product, 
$\{C,B\} = \cc{C}at(C, B)$, and 
$C \otimes B = C \times B$.

As in the case of enriched category theory, from Proposition \ref{prop:cnatcomocend} and Remark \ref{rem:limitsfuntorial} it easily follows that for any $\eps$ the 2-functor categories have cotensors and that they are computed pointwise. The proof is very similar to \cite[\S 3.3]{K1} so we omit it.
\begin{\prop} \label{prop:cotensorpointwise}
 Cotensor products are computed pointwise in $\cc{H}om_{\eps}(\A,\B)$. This means precisely that for $C \in \Cat$, $\A \mr{G} \B$, if $\{C,GA\}$ exist for each $A \in \A$ then the formula $\{C,G\}A = \{C,GA\}$ defines a $2$-functor that is the cotensor product of $C$ and $G$ in $\cc{H}om_{\eps}(\A,\B)$. \qed
\end{\prop}

We finish this subsection establishing in the $\eps$ case some well known formulas of enriched category theory. We omit to explicitly state the corresponding formulas for bilimits, which hold with the same proofs.
\begin{proposition} \label{coro:endcomoweighted}
For $\A \mr{W} \Cat$, $\A \mr{P} \B$, if $\B$ is cotensored  we have
$$
 \{ W,  P \}_{\eps} = \eps \!\! \int_{A} \{ WA , PA \}
$$ 
\end{proposition}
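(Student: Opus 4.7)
The plan is to verify the stated equality by checking that the right-hand side satisfies the universal property that defines $\{W,P\}_\eps$ (Definition \ref{de:NE}). Concretely, for any object $B \in \B$, I will produce a chain of natural isomorphisms
\[
\B\!\left(B,\ \eps \!\! \int_{A} \{WA, PA\}\right) \ \cong\ \cc{H}om_\eps(\A,\Cat)\bigl(W,\ \B(B, P-)\bigr),
\]
and then conclude by the representable form of the universal property (the case $B = \eps \! \int_A \{WA,PA\}$ applied to $id$ produces the universal $w$-$\eps$-cone).

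The chain I would build has four steps. First, I use formula \eqref{eq:isoparaends} of Proposition \ref{def:cpseudocoendprop} applied to the $2$-functor $T(A',A) = \{WA, PA\}$ (constant in the first variable, so the dinaturality reduces to naturality) to get
\[
\B\!\left(B,\ \eps \!\! \int_{A} \{WA, PA\}\right)\ \cong\ \eps \!\! \int_{A} \B\bigl(B,\, \{WA, PA\}\bigr).
\]
Second, by the defining universal property of the cotensor (Definition \ref{rem:cotensors}, Corollary \ref{comeout} applied with $\A = 1$), for each $A$ there is an isomorphism $\B(B, \{WA, PA\}) \cong \Cat(WA, \B(B, PA))$, $2$-natural in $A$; substituting this inside the $\eps$-end, I obtain
\[
\eps \!\! \int_{A} \B\bigl(B, \{WA, PA\}\bigr)\ \cong\ \eps \!\! \int_{A} \Cat\bigl(WA,\, \B(B, PA)\bigr).
\]
Third, I invoke Proposition \ref{prop:cnatcomocend} with $P := W$ and $Q := \B(B, P-)$, which rewrites the last $\eps$-end as $\cc{H}om_\eps(\A,\Cat)(W, \B(B, P-))$. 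Finally, the composite isomorphism is natural in $B$, so by the representability form of Definition \ref{de:NE} the object $\eps \!\! \int_{A} \{WA, PA\}$ represents the $\eps$-cones functor and must therefore equal $\{W,P\}_\eps$; the universal $w$-$\eps$-cone is recovered by evaluating the isomorphism at $B = \eps \!\! \int_A \{WA, PA\}$ on the identity.

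The only non-routine checks concern the naturality of all three isomorphisms in $B$ and (for the second one) in $A$, together with the verification that the dinaturality of the $\eps$-end in step one does indeed collapse to $\eps$-naturality for a $2$-functor constant in its first argument, so that the same class $\Sigma$ governs invertibility on both sides. These are straightforward from the definitions but must be spelled out to legitimately paste the three ends/coends into a single chain; I expect this bookkeeping to be the main, though mild, obstacle. Once it is in place, the argument is essentially identical in structure to the standard $\cc{V}$-enriched proof, with the key input being that Proposition \ref{prop:cnatcomocend} and formula \eqref{eq:isoparaends} hold uniformly for arbitrary $\eps$.
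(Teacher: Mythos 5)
Your proof is correct and is essentially the paper's own argument: the same three isomorphisms (Proposition \ref{prop:cnatcomocend}, the defining property of the cotensor, and \eqref{eq:isoparaends}) composed in the reverse order, concluding via the representability form of Definition \ref{de:NE}. The extra naturality bookkeeping you flag is indeed the only content the paper leaves implicit.
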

\begin{proof}
 We have the following chain of natural isomorphisms
 \begin{align*}
 \cc{H}om_{\eps}(\A,\Cat)(W,\B(B,P-)) & \cong \eps \!\! \int_{A} \Cat(WA, \B(B,PA)) \\
 & \cong \eps \!\! \int_{A} \cc{B}(B, \{WA, PA\}) \\
 & \cong \cc{B}(B, \eps \!\! \int_{A} \{WA, PA\}) 
 \end{align*}
 \noindent given in turn by Proposition \ref{prop:cnatcomocend}, definition of cotensor and \eqref{eq:isoparaends}. Then the statement follows  
 by Definition \ref{de:NE}. 
\end{proof}

With a dual proof we have

\begin{corollary} \label{coro:coendcomoweighted}
For $\A^{op} \mr{W} \Cat$, $\A \mr{P} \B$, if $\B$ is tensored we have
$$
 W \otimes_{\eps} P = \eps \!\! \int^{A} WA \otimes PA
$$  \qed
\end{corollary}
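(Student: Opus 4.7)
The plan is to dualize the proof of Proposition \ref{coro:endcomoweighted} in a straightforward way, replacing cotensors by tensors and the end formula \eqref{eq:isoparaends} by its coend counterpart \eqref{eq:isoparacoends}. Concretely, I would verify the universal property characterizing $W \otimes_\eps P$ (Definition \ref{sub:epscolimits}) by exhibiting, for every object $B \in \B$, a natural isomorphism
\[
\cc{B}\!\left(\eps \!\! \int^{A} WA \otimes PA,\; B\right) \;\cong\; \cc{H}om_{\eps}(\A^{op},\Cat)(W, \B(P-,B)).
\]

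To produce this isomorphism I would chain together the following natural identifications, in this order. First, apply the coend version of the representability formula \eqref{eq:isoparacoends} to turn the left-hand side into $\eps \!\! \int_{A} \cc{B}(WA \otimes PA, B)$. Second, invoke the defining universal property of the tensor product (Definition \ref{rem:cotensors}), namely $\cc{B}(WA \otimes PA, B) \cong \Cat(WA, \cc{B}(PA, B))$, naturally in $A$ viewed as an object of $\A^{op} \times \A$. Third, use Proposition \ref{prop:cnatcomocend} to rewrite $\eps \!\! \int_{A} \Cat(WA, \cc{B}(PA, B))$ as $\cc{H}om_{\eps}(\A^{op},\Cat)(W, \B(P-,B))$, which is exactly the right-hand side. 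Definition \ref{sub:epscolimits} then identifies the representing object as $W \otimes_\eps P$.

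There is no substantial obstacle here; the only point requiring any care is that each isomorphism in the chain is natural in $B$ (so that one actually obtains a representation of the functor $B \mapsto Cones_\eps^W(P,B)$), and that the $\eps$-dicone extracted from setting $B = \eps \!\! \int^{A} WA \otimes PA$ and tracking the identity through the chain coincides with the universal $w$-$\eps$-cone $\nu$ from Definition \ref{sub:epscolimits}. Both naturality and this identification are routine bookkeeping exercises that follow by the same uniqueness-in-universal-property arguments used in the proof of Proposition \ref{coro:endcomoweighted}, so as in that proof I would omit the explicit verification.
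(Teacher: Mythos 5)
Your proposal is correct and matches the paper exactly: the paper proves this corollary simply by asserting ``With a dual proof we have,'' referring to the chain of isomorphisms in Proposition \ref{coro:endcomoweighted}, and your dualized chain (coend representability \eqref{eq:isoparacoends}, the tensor adjunction, and Proposition \ref{prop:cnatcomocend}) is precisely that dual proof. No further comment is needed.
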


\vspace{-4ex}

\begin{remark} \label{rem:limitsenCat}
 For the case $\B = \Cat$, we have
\begin{enumerate}
 \item  \hspace{-1ex}  For $\A \xr{W} \Cat$, $\A \xr{P} \Cat$, 
  \hspace{-2ex} $\quad \displaystyle \{W,P\}_\eps \stackrel{\eqref{coro:endcomoweighted}}{=} \eps \!\! \int_{A} \Cat(WA,PA) \stackrel{\eqref{prop:cnatcomocend}}{=} \cc{H}om_\eps(\A,\Cat)(W,P)$
  \item For $\A^{op} \xr{W} \Cat$, $\A \xr{P} \Cat$, $\quad \displaystyle W \otimes_{\eps} P \stackrel{\eqref{coro:coendcomoweighted}}{=} \eps \!\! \int^{A} WA \times PA$
 \end{enumerate}
\end{remark}

\subsection{Conical $\sigma$-colimits} \label{sub:conicalsigma}

\begin{remark}\label{remark:conossonopnatural}
Let $F:\A \mr{} \B$ be a 2-functor, and $E$ an object of $\B$. 
It is immediate to check that the isomorphism 
in \S$\,$\ref{sub:terminology}, item \ref{item:conicalweighted}, restricts to an isomorphism 
(recall that $k_X$ stands for the functor constant at $X$)
\begin{equation} \label{eq:item9parac}
 \cc{H}om_{\sigma}(\A^{op},\Cat)(k_1,\B(F-,E)) \cong \cc{H}om_{op \sigma }(\A,\B)(F,k_E)
\end{equation}
By considering the weight $k_1$ in \eqref{eq:defcpseudocolimconnu}, it follows
$$
\xymatrix{\B(k_1 \otimes_{\sigma} F, E) \ar[r]^<<<<<{\nu^*}_<<<<<{\cong} & \cc{H}om_{\sigma}(\A^{op},\Cat)(k_1,\B(F-,E)) \; {\cong} \; \cc{H}om_{op \sigma }(\A,\B)(F,k_E)} 
$$
\end{remark}

\begin{\de} \label{def:claxlimit} 
Let $F:\A \mr{} \B$ be a 2-functor, and $E$ an object of $\B$. We define the category of \emph{$\sigma$-cones}, \mbox{$Cones_\sigma^{\Sigma}(F,E) = Cones_\sigma^{k_1}(F,E)$}, see Definition \ref{de:NE}. 
By Remark \ref {remark:conossonopnatural}, a \emph{$\sigma$-cone} for $F$ (with respect to $\Sigma$) with vertex $E$ corresponds to a op$\sigma$-natural transformation $F \Mr{\theta} k_E$, this amounts to a
lax cone $\{FA  \mr{\theta_A} E\}_{A\in \cc{A}}$,
\mbox{$\{\theta_{B} Ff \Mr{\theta_f} \theta_A \}_{A\mr{f} B \in \A}$} such that $\theta_f$ is invertible for every $f$ in $\Sigma$. 
The morphisms between two $\sigma$-cones correspond to their morphisms as lax cones. 

We now describe the universal property  defining the (conical) \mbox{$\sigma$-colimit} of $F$. 
The \mbox{\emph{$\sigma$-colimit}} in $\B$ (with respect to $\Sigma$) of the $2$-functor \mbox{$F:\A\mr{}\B$} is the universal $\sigma$-cone, denoted \mbox{$\{FA \mr{\lambda_A} \coLimconw{A\in \A}{FA}\}_{A\in \A}$,} \mbox{$\{\lambda_{B} Ff \Mr{\lambda_f} \lambda_A \}_{A\mr{f} B \in \A}$} in the sense that for each \mbox{$E\in \B$,} pre-composition with $\lambda$ is an isomorphism of categories
\begin{equation}\label{isoplim}
\; \B(\coLimconw{A\in \A}{FA},E) \mr{\lambda^*} Cones_\sigma(F,E) 
\end{equation}
We denote the weak notion of \mbox{\emph{(conical) $\sigma$-bicolimit}} as in Remark \ref {rem:defbi}, where $\lambda^*$ is an equivalence, 
$\bicoLimconw{A\in \A}{FA}$. By definition we have, for $F:\A\mr{}\B$,

\begin{equation} \label{eq:bydefwehave}
\coLim{A\in \A}{FA} = k_1 \otimes_{\sigma} F, \hspace{2cm} \bicoLim{A\in \A}{FA} = k_1 \;{}_{bi} \! \otimes_{\sigma} F.
\end{equation}
\end{\de}

 Conical op$\sigma$-colimits are special cases of \emph{Cartesian quasi limits} considered by J. W. Gray in \cite[I,7.9.1 iii)]{GRAY}. What we would denote by $\opcoLimconw{A\in \A}{FA}$ is, in Gray's notation, $Cart\ q \hbox{-}\coLimsinc{\A,iso\Sigma}{F}$.
\begin{remark}\label{rem:notgray} 
 Conical $\sigma$-limits $\Limconw{A\in \A}{FA}$ are $\sigma$-limits weighted by $k_1$, in this case there is no ``op'' in the lax naturality involved (because there is no ``op'' in the first isomorphism of \S$\,$\ref{sub:terminology}, item \ref{item:conicalweighted}), and they correspond in Gray's notation to $Cart\ q\hbox{-}\Limsinc{\A,iso\Sigma}{F}$.
 We also refer to $\sigma$-natural transformations $k_E \Mr{\theta} F$ as \emph{$\sigma$-cones} (as in Definition \ref {def:claxlimit} above), 
and denote the so-obtained category of $\sigma$-cones by  
 $Cones_\sigma^{\Sigma}(E,F)$.
 \end{remark}
 The following diagram illustrates the correspondence between $\sigma$-cones in $\B^{op}$ and $\sigma$-cones in $\B$ (recall that $2$-cells in $\B^{op}$ keep their direction and that we denote objects in $\B^{op}$ with an overline, see \S$\,$\ref{sub:terminology}, item \ref{def:op}):
\begin{equation} \label{eq:conescocones}
\hbox{For } A \mr{f} B \hbox{ in } \A, \quad
\vcenter{\xymatrix@R=.5pc{ & \overline{FB} \ar[dd]^{Ff} \\ \overline{E} \ar[rd]_{\theta_A} \ar[ru]^{\theta_B} \ar@{}[r]|>>>>{\Downarrow \theta_{f}} & \\ & \overline{FA} }}
\hbox{\; in } \B^{op} \hbox{ \quad corresponds to \quad }
\vcenter{\xymatrix@R=.5pc{ FA \ar[dd]_{Ff} \ar[rd]^{\theta_A} \\ & E \ar@{}[l]|>>>>{\Uparrow \theta_{f}} \\ FB \ar[ru]_{\theta_B} }}
\hbox{\; in } \B.
\end{equation}
\begin{\de} \label{def:cartsigma}
Recall Definition \ref {def:gammaP}. We denote by $\cart_\Sigma$ the $1$-subcategory of $\cc{E}l_P$ with arrows $(f,\varphi)$ that satisfy $f \in \Sigma$, $\varphi$ invertible (i.e. the intersection of $\cart_P$ and $\Diamond_P^{-1}(\Sigma)$).
\end{\de}

In \cite[Theorem 15]{S} Street shows that for each weight $\A \mr{W} \Cat$, $s$-limits weighted by $W$ are equivalent to a special type of Gray's cartesian quasi-limit (see Remark \ref {rem:notgray}) over $\cc{E}l_{W}$.
A slight modification of this procedure shows that weighted $\sigma$-limits can be expressed as conical $\sigma$-limits. Since we will use this result for colimits, we prefer to prove the colimit version.

\begin{\prop} \label{key}
  Let
 $\A^{op} \mr{W} \Cat$, $\A \mr{P} \B$, then we have
$$
  \cc{H}om_{\sigma}^{\Sigma}(\A^{op},\Cat)(W,\B(P-,B)) \cong \cc{H}om_{\sigma}^{\cart_\Sigma}(\cc{E}l_{W},\Cat)(k_1, \B(P\Diamond_W^{op} -,B))
$$
\end{\prop}

\begin{proof}
 Consider the isomorphism
 $$\cc{H}om_{\ell}(\A^{op},\Cat)(W,H) \cong \cc{H}om_{\ell}(\cc{E}l_{W},\Cat)(k_1, H \Diamond_W)$$
 \noindent of Remark \ref {rem:oplaxdense} and the explicit formulas therein. 
 Then it can be seen at once that the isomorphism restricts to 
 $$\cc{H}om_{\sigma}^{\Sigma}(\A^{op},\Cat)(W,H) \cong \cc{H}om_{\sigma}^{\cart_\Sigma}(\cc{E}l_W,\Cat)(k_1, H \Diamond_W)$$
 In particular, for $H = \B(P-,B)$ we have the desired isomorphism.
\end{proof}

This Proposition has as a corollary the following fundamental result:

\begin{theorem} \label{colimdaconico}
   Let $\A^{op} \mr{W} \Cat$, $\A \mr{P} \B$, then
$$
 W \otimes_{\sigma} P =  k_1 \otimes_{\sigma} P\Diamond_W^{op} = \coLimconcartsigma{(x,A) \in \cc{E}l_W^{op}} {PA} ,  \quad 
 W \;{}_{bi} \! \otimes_{\sigma} P = k_1 \;{}_{bi} \! \otimes_{\sigma} P\Diamond_W^{op} = \bicoLimconcartsigma{(x,A) \in \cc{E}l_W^{op}} {PA},
$$
which means that the universal properties defining each object are equivalent.
In particular, by considering $\Sigma = \A_0$, we have
$$
 W \otimes_{p} P = \coLimconcartw{(x,A) \in \cc{E}l_W^{op}} {PA},  \quad\quad  W \;{}_{bi} \! \otimes_{p} P = \bicoLimconcartw{(x,A) \in \cc{E}l_W^{op}} {PA}.
$$
\end{theorem}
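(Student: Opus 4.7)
The theorem is essentially a direct corollary of Proposition \ref{key}, together with the definitions of weighted and conical $\sigma$-colimits. The strategy is to chase the defining universal properties through a chain of natural isomorphisms, so that both objects represent the same functor in $B$.

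The plan is to show that for every $B \in \B$ we have natural isomorphisms of categories
\[
 \B(W \otimes_{\sigma} P,\, B)
 \;\cong\; \cc{H}om_{\sigma}^{\Sigma}(\A^{op},\Cat)\bigl(W,\B(P-,B)\bigr)
 \;\cong\; \cc{H}om_{\sigma}^{\cart_\Sigma}(\cc{E}l_{W},\Cat)\bigl(k_1, \B(P\Diamond_W^{op} -,B)\bigr)
 \;\cong\; \B\bigl(\coLimconcartsigma{(x,A) \in \cc{E}l_W^{op}}{PA},\, B\bigr),
\]
each natural in $B$, which means that both objects satisfy the same universal property and hence the universal properties defining them are equivalent. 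Concretely, the first isomorphism is the defining property of $W \otimes_\sigma P$ given in Definition \ref{sub:epscolimits}; the second is exactly Proposition \ref{key}; the third is obtained by unwinding the defining universal property of the conical $\sigma$-colimit of the $2$-functor $P\Diamond_W^{op} : \cc{E}l_W^{op} \to \B$ with respect to $\cart_\Sigma$, which by Definition \ref{def:claxlimit} (combined with \eqref{eq:item9parac}) is precisely the category of op$\sigma$-natural transformations $P\Diamond_W^{op} \Rightarrow k_B$ whose structural $2$-cells at the arrows of $\cart_\Sigma$ are invertible. Following the universal $\sigma$-cones along these isomorphisms identifies the structural cones on both sides, so the first equality $W \otimes_\sigma P = k_1 \otimes_\sigma P\Diamond_W^{op}$ is also obtained.

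For the bilimit version one repeats the same chain, but with ``$\cong$'' replaced by ``$\approx$'' in the first and last isomorphism: Proposition \ref{key} gives an actual isomorphism, so it transports equivalences on either side, and the resulting chain of equivalences again identifies the defining universal properties up to equivalence, yielding $W \;{}_{bi}\!\otimes_{\sigma} P = \bicoLimconcartsigma{(x,A)\in\cc{E}l_W^{op}}{PA}$. The case $\Sigma = \A_0$ is immediate: by Remark \ref{rem:cmataplyepsilon}, $\sigma$-naturality with $\Sigma = \A_0$ is pseudonaturality, so $\otimes_\sigma$ becomes $\otimes_p$; on the other side, $\cart_{\A_0}$ consists of those $(f,\varphi) \in \cc{E}l_W$ with $f \in \A_0$ (no constraint) and $\varphi$ invertible, that is, precisely the class $\cart_W$.

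The only nontrivial ingredient is Proposition \ref{key}, which has already been proved, so there is no genuine obstacle here; the work is bookkeeping with universal properties, taking care that under Proposition \ref{key} the \emph{representing} cone/dicone data transports correctly, so that the chain of natural isomorphisms in $B$ is indeed induced by a single morphism $k_1 \otimes_\sigma P\Diamond_W^{op} \to W \otimes_\sigma P$ (and an equivalence in the bi-case) identifying the two objects.
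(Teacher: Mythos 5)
Your proposal is correct and follows essentially the same route as the paper: the paper's proof also reduces the first equality to Proposition \ref{key} (identifying the two categories of $\sigma$-cones, hence the representing universal properties) and the second equality to the definition \eqref{eq:bydefwehave} of the conical $\sigma$-colimit as $k_1\otimes_\sigma(-)$. Your additional remarks on transporting the universal cones, on the bi-case, and on $\Sigma=\A_0$ are accurate elaborations of what the paper leaves implicit.
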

\begin{proof}
 The first equality (in both expressions) is given by Proposition  \ref{key}, the second one is \eqref{eq:bydefwehave}.
\end{proof}

 In particular for $\eps = p$ using Theorem \ref{colimdaconico} we have the following expressions of a pseudocoend of tensors as a conical 
 $\sigma$-colimit (for the first equality consider 
$(\cc{A}^{op})^{op} \mr{P} \cc{C}at$, 
$\cc{A}^{op} \mr{W} \cc{C}at$, and the first coend as indexed by 
$\cc{A}^{op}$).

\begin{proposition} \label{eq:paraintro}
$$ 
  \coLimconcartp{(x,A) \in \cc{E}l_P^{op}} {WA} = \displaystyle p \!\! \int^{A} PA \times WA  = \displaystyle p \!\! \int^{A} WA \times PA = \coLimconcartw{(y,A) \in \cc{E}l_W^{op}} {PA}
$$

\vspace{-4ex}

\qed
\end{proposition}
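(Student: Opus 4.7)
The plan is to derive all four expressions as instances of the same general identity provided by Theorem~\ref{colimdaconico} combined with Corollary~\ref{coro:coendcomoweighted}, applied twice with the roles of $P$ and $W$ exchanged, and sandwiched by the symmetry of the cartesian product in $\cc{C}at$.

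For the leftmost equality I would follow the variance cue in the parenthetical hint: regard $P$ as a weight $(\A^{op})^{op}\mr{P}\cc{C}at$ and $W$ as a diagram $\A^{op}\mr{W}\cc{C}at$ valued in $\B=\cc{C}at$. Applying Theorem~\ref{colimdaconico} with base $2$-category $\A^{op}$ (so that $\cc{E}l_P$ is the usual $2$-category of elements of $P$, sitting over $\A$, and the class of distinguished arrows is $\cart_P=\cart_{\A_0}$ since $\eps=p$ means $\Sigma=\A_0$) gives
$$
P \otimes_p W \;=\; \coLimconcartp{(x,A)\in\cc{E}l_P^{op}}{WA}.
$$
On the other hand Corollary~\ref{coro:coendcomoweighted}, together with the fact that the tensor in $\cc{C}at$ is the cartesian product (Definition~\ref{rem:cotensors}), identifies the left-hand side with $\displaystyle p\!\!\int^{A} PA\times WA$, the coend being indexed by $A\in\A^{op}$ as stated. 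Combining these two identifications yields equality~(1).

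For the middle equality, the symmetry isomorphism $PA\times WA\cong WA\times PA$ in $\cc{C}at$ is $2$-natural in both arguments, and composing with the swap-of-factors isomorphism $\A^{op}\times\A\cong\A\times\A^{op}$ produces an isomorphism of the two integrands viewed as $2$-functors from $\A^{op}\times\A$ to $\cc{C}at$. Since $p$-coends are functorial in their integrand (the $\eps$-version of Remark~\ref{rem:limitsfuntorial}), this induces the required equality in the sense of Remark~\ref{rem:defbi}. The third equality is then simply the previous argument with the roles of $P$ and $W$ exchanged: view $W$ as a weight $(\A^{op})^{op}\mr{W}\cc{C}at$ and $P$ as the diagram $\A^{op}\mr{P}\cc{C}at$; Theorem~\ref{colimdaconico} and Corollary~\ref{coro:coendcomoweighted} together give
$$
W \otimes_p P \;=\; \coLimconcartw{(y,A)\in\cc{E}l_W^{op}}{PA} \;=\; p\!\!\int^{A} WA\times PA.
$$

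The only substantive obstacle is bookkeeping of variances: the two middle coends denote the same object, but the underlying $2$-functors $\A^{op}\times\A\to\cc{C}at$ differ by the swap of arguments, so invoking symmetry of the cartesian product must be matched with the swap-of-variables isomorphism. Once the variance conventions of the hint are fixed, each of the three equalities is a direct citation of the results already established in Sections~\ref{sub:epslimits}--\ref{sub:conicalsigma}.
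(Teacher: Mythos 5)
Your proof is correct and follows exactly the route the paper intends: the paper offers no separate argument beyond the parenthetical hint preceding the statement, which is precisely your double application of Theorem \ref{colimdaconico} and Corollary \ref{coro:coendcomoweighted} (once over the base $\cc{A}^{op}$ with $P$ as the weight and $W$ as the diagram, once over $\cc{A}$ with the roles exchanged), linked by the symmetry of the cartesian product. Your explicit bookkeeping of the swap-of-variables isomorphism for the middle equality is in fact more detail than the paper itself records.
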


\vspace{1ex}

{\bf The expression of a $\Cat$-valued $2$-functor as a conical $\sigma$-bicolimit of representable $2$-functors.} 

It is a classical result that any $\cc{S}et$-valued functor has a canonical expression as a colimit of representable functors. 
We now establish a $2$-categorical version of this result.
 Consider a $2$-functor $\A \mr{P} \Cat$, and the Yoneda embedding \mbox{$\A^{op} \mr{h} \cc{H}om_p(\A,\Cat)$,} \mbox{$hA = \A(A,-)$.} 
  Recall the Pseudo-Yoneda Lemma, \cite[(1.9)]{S2}, see \mbox{\cite[1.1.18]{DD}} for a proof: 
  
\begin{sinnadaitalica}[\bf Pseudo-Yoneda Lemma] $ $ \label{pseudoyoneda}

{\bf a)} For any $2$-functor $\A \mr{Q} \Cat$, evaluation at the identity for each $A \in \cc{A}$ provides the components: 
 $$\cc{H}om_p(\A,\Cat)(\A(A,-),Q) \mr{\approx} QA$$
of a pseudo-equivalence, that is an equivalence in 
$\cc{H}om_p(\A,\Cat)$, between $Q$ and the $2$-functor on the left side. Furthermore, this equivalence is pseudonatural in the variable $Q$.

From this, as usual, it follows: 
  
{\bf b)} The Yoneda embedding is pseudo-fully-faithful. That is, there is an equivalence of categories:
$$
\cc{H}om_p(\A,\Cat)(\A(A,-),\, \A(B,-)) \mr{\approx} \cc{A}(B,\, A).
$$

\vspace{-4ex}

\cqd
\end{sinnadaitalica}
\begin{proposition} \label{Pcomobicoend}
For any $2$-functor $\A \mr{P} \Cat$, we have a pseudo-equivalence, that is an equivalence in $\cc{H}om_p(\A,\Cat)$:
$$
\displaystyle P \;\;\approx \;\; P \otimes_p h  \;\stackrel{(1)}{=}\; p \!\! \int^{A} PA \otimes \cc{A}(A,-).
$$
\end{proposition}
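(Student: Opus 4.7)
The plan is to verify the equality (1) via Corollary \ref{coro:coendcomoweighted}, and then to establish the pseudo-equivalence $P \approx P \otimes_p h$ by a standard Yoneda argument applied to the bi-universal property of the pseudocolimit.

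For the equality (1): the $2$-category $\cc{H}om_p(\A, \Cat)$ is tensored, since tensors are computed pointwise (the evident dual of Proposition \ref{prop:cotensorpointwise}) and $\Cat$ is tensored. Applying Corollary \ref{coro:coendcomoweighted} with the base $2$-category taken to be $\A^{op}$, weight $P : \A \to \Cat$, and $2$-functor $h : \A^{op} \to \cc{H}om_p(\A, \Cat)$ then gives $P \otimes_p h = p \!\! \int^{A} PA \otimes hA = p \!\! \int^{A} PA \otimes \A(A,-)$.

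For the pseudo-equivalence, I start from the bi-universal property of the pseudocolimit (Definition \ref{sub:epscolimits} together with Remark \ref{rem:defbi}), which produces, for every $Q \in \cc{H}om_p(\A, \Cat)$, a pseudonatural equivalence in $Q$
\[
\cc{H}om_p(\A, \Cat)(P \otimes_p h,\, Q) \;\approx\; \cc{H}om_p(\A, \Cat)\bigl(P,\; \cc{H}om_p(\A, \Cat)(h-,\, Q)\bigr).
\]
By part (a) of the Pseudo-Yoneda Lemma \ref{pseudoyoneda}, the inner $2$-functor $\cc{H}om_p(\A, \Cat)(h-, Q)$ is pseudo-equivalent to $Q$ in $\cc{H}om_p(\A, \Cat)$, pseudonaturally in $Q$. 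Post-composing with the $2$-functor $\cc{H}om_p(\A, \Cat)(P, -)$, which sends equivalences to equivalences, yields a pseudonatural equivalence
\[
\cc{H}om_p(\A, \Cat)(P \otimes_p h,\, Q) \;\approx\; \cc{H}om_p(\A, \Cat)(P,\, Q).
\]
To conclude $P \otimes_p h \approx P$ in $\cc{H}om_p(\A, \Cat)$, I would then apply the (bi)Yoneda principle inside the $2$-category $\cc{H}om_p(\A, \Cat)$: evaluating the pseudonatural equivalence above at $Q = P$ and $Q = P \otimes_p h$, and transporting the identity morphisms along the resulting equivalences of categories, produces mutually quasi-inverse pseudonatural transformations between $P$ and $P \otimes_p h$, thereby exhibiting the desired pseudo-equivalence.

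The main obstacle is coherence-theoretic: keeping track of the pseudonaturality in $Q$ of the Yoneda equivalence so that it remains valid inside the outer $\cc{H}om_p(\A, \Cat)(P, -)$, and verifying that the resulting composite is a genuine pseudonatural equivalence of $2$-functors $\cc{H}om_p(\A,\Cat) \to \Cat$ rather than a mere pointwise family of equivalences of categories. Once that bookkeeping is in place, the argument is the clean $2$-categorical transcription of the classical fact that every presheaf is the canonical colimit of its representables.
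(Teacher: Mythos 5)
Your proof is correct and follows essentially the same route as the paper's: both arguments establish an equivalence $\cc{H}om_p(\A,\Cat)(P\otimes_p h,\,Q)\approx\cc{H}om_p(\A,\Cat)(P,\,Q)$ pseudonatural in $Q$ and conclude by Pseudo-Yoneda b) applied inside $\cc{H}om_p(\A,\Cat)$, with Pseudo-Yoneda a) supplying the key identification of $\cc{H}om_p(\A,\Cat)(h-,Q)$ with $Q$. The only cosmetic difference is that the paper unwinds the right-hand coend explicitly through \eqref{eq:isoparacoends}, the tensor adjunction and Proposition \ref{prop:cnatcomocend}, whereas you invoke the defining universal property of the weighted pseudocolimit $P\otimes_p h$ directly.
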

\begin{proof}
Consider $P$ as a weight for the colimit of the Yoneda embedding 
$h$, then Corollary \ref{coro:coendcomoweighted} shows $\stackrel{(1)}{=}$. 
Then, we have the following chain of equivalences, pseudonatural in the variable $Q$:
\begin{align*}
 \cc{H}om_p(\cc{A},\cc{C}at) (p \!\!\int^{A} PA \otimes \cc{A}(A,-),Q)  
 & \cong \; p \!\!\int_{A} \cc{H}om_p(\cc{A},\cc{C}at)(PA \otimes \cc{A}(A,-) ,Q) 
 \\
 & \cong \; p \!\!\int_{A} \Cat(PA, \cc{H}om_p(\cc{A},\cc{C}at)(\cc{A}(A,-),Q)) 
 \\
 & \approx \; p \!\!\int_{A} \cc{C}at(PA, \, QA)
 \\
 & \cong \; \cc{H}om_p(\cc{A},\, \cc{C}at)(P,\,Q). 
\end{align*}
\noindent justified, in turn,  by 
\eqref{eq:isoparacoends}, 
Proposition \ref {prop:cotensorpointwise}, 
Pseudo Yoneda a) and  
Proposition \ref{prop:cnatcomocend}. 
By the pseudonaturality in $Q$, a use of Pseudo Yoneda b), applied this time to the category $\cc{H}om_p(\A,\Cat)$, finishes the proof.
\end{proof}

From this Proposition and Theorem \ref{colimdaconico} we have:
\begin{proposition} \label{Pcomosigmabicolimit}
For any $2$-functor $\A \mr{P} \Cat$, we have a pseudo-equivalence, that is an equivalence in $\cc{H}om_p(\A,\Cat)$:
$$ 
\displaystyle P \approx  \coLimconcartp{(x,A) \in \cc{E}l_P^{op}} \A(A,-).
$$

\vspace{-5ex}

\cqd
\end{proposition}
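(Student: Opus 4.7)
The plan is to obtain this as an immediate corollary of the two preceding results, in the manner suggested by the text right before the statement.

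First I would invoke Proposition \ref{Pcomobicoend}, giving the pseudo-equivalence $P \approx P \otimes_p h$ in $\cc{H}om_p(\A,\Cat)$, where $h : \A^{op} \to \cc{H}om_p(\A,\Cat)$, $A \mapsto \A(A,-)$, is the Yoneda embedding.

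Next I would apply Theorem \ref{colimdaconico} in its pseudo (bi-)colimit form, taking $\sigma = p$ (i.e.\ $\Sigma = \A_0$), the theorem's weight to be $P$, and the theorem's diagram to be $h$; the ambient target $2$-category is $\cc{H}om_p(\A,\Cat)$. Since $\Sigma$ is the entire underlying $1$-category of $\A$, Definition \ref{def:cartsigma} gives $\cart_{\A_0} = \cart_P$, the class of all cocartesian morphisms of $\cc{E}l_P$ (Remark \ref{rem:diamondP}). The theorem then rewrites the pseudocoend as a conical $\sigma$-bicolimit
$$P \otimes_p h \;\approx\; \coLimconcartp{(x,A) \in \cc{E}l_P^{op}} \A(A,-).$$
Composing this with the equivalence from the first step produces the desired pseudo-equivalence in $\cc{H}om_p(\A,\Cat)$.

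The only point requiring attention is notational bookkeeping with the variance conventions: $P$, although a $2$-functor $\A \to \Cat$, plays the weight role in $P \otimes_p h$ (one reads $\A$ as $(\A^{op})^{op}$ to fit Definition \ref{sub:epscolimits}), while $h$ plays the diagram role. With these identifications in place, the $2$-category of elements appearing in Theorem \ref{colimdaconico} is precisely $\cc{E}l_P$ from Definition \ref{def:gammaP}, and the composite $h \circ \Diamond_P^{op}$ is the evident diagram $(x,A) \mapsto \A(A,-)$ indexing the conical bicolimit. Beyond verifying these identifications, no further calculation is required.
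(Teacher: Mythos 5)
Your proposal is correct and is precisely the argument the paper intends: the paper derives Proposition \ref{Pcomosigmabicolimit} in one line from Proposition \ref{Pcomobicoend} and Theorem \ref{colimdaconico}, exactly as you do, with the same identification of $P$ as the weight, $h$ as the diagram, and $\cart_{\A_0}=\cart_P$ in $\cc{E}l_P$. Your remarks on the variance bookkeeping match the paper's own parenthetical note before Proposition \ref{eq:paraintro}.
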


\begin{remark} \label{rem:diagenHoms}
In particular, since $\sigma$-bilimits are defined up to equivalence, it follows that any $\cc{C}at$-valued $2$-functor $P$ is a conical $\sigma$-bilimit in $\cc{H}om_p(\cc{A},\cc{C}at)$ of a $2$-diagram in $\cc{H}om_s(\cc{A},\cc{C}at)$ of representable $2$-functors,  indexed by the pair 
($\cc{E}l_P^{op}$,$\,\cart_P$).
\end{remark} 

\subsection{A construction of conical $\sigma$-colimits of categories.} 
\label{constructions}

In \cite[I,7.11.4 i)]{GRAY} Gray proves that conical op$\sigma$-colimits in $\Cat$ exist and gives an explicit construction of them. In Proposition \ref{prop:calculodeclaxcolim} below, we interpret this result according to our notation. We will use the left adjoint $\pi_0$ of the inclusion $\Cat \mr{d} 2$-$\Cat$ 
(where $2$-$\Cat$ is the $2$-category of small $2$-categories, $2$-functors and $2$-natural transformations)
and the existence of the usual category of fractions \cite{GZ}. For a subcategory $\Sigma$ of a category $C$, we will denote this category by $C[\Sigma^{-1}]$. 

We observe that Gray only makes invertible those morphisms of the form $(f,id) \in \cart_\Sigma$ while we invert every morphism in $\cart_\Sigma$. Since every morphism $(f,\varphi) \in \cart_\Sigma$ can be factorized as $(id,\varphi) (f,id)$ and $(id,\varphi)$ is already invertible because $\varphi$ is an isomorphism, both constructions are isomorphic.    

\begin{proposition} \label{prop:calculodeclaxcolim}
Let $\A \mr{Q} \Cat$ be a $2$-functor. Then 
$$
\opcoLim{A \in \A}{QA} = (\pi_0\cc{E}l_Q)[\cart_\Sigma^{-1}]
$$.

\vspace{-7ex}

\cqd
\end{proposition}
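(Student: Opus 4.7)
The plan is to establish, for each $E \in \cc{C}at$, an isomorphism of categories between the cones defining the op-$\sigma$-colimit, namely the $\sigma$-natural transformations $Q \Rightarrow k_E$ together with their modifications, and the category $\cc{C}at((\pi_0 \cc{E}l_Q)[\cart_\Sigma^{-1}], E)$, natural in $E$. This matches the universal property defining $\opcoLim{A \in \A}{QA}$ and thus yields the claimed equality.

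The main tool is the lax density of the Grothendieck construction (Remark \ref{rem:oplaxdense}): specializing $P := Q$ and the target $2$-functor to the constant $k_E$ produces a natural isomorphism
\[
\cc{H}om_{\ell}(\A, \cc{C}at)(Q, k_E) \;\cong\; \cc{H}om_{\ell}(\cc{E}l_Q, \cc{C}at)(k_1, k_E).
\]
Unwinding axioms \textbf{LN0}, \textbf{LN1}, \textbf{LN2} for a lax natural transformation between the two constant $\cc{C}at$-valued $2$-functors $k_1$ and $k_E$ on $\cc{E}l_Q$, one sees that such a datum reduces to an assignment sending each object of $\cc{E}l_Q$ to an object of $E$ and each morphism to a morphism in $E$, which is functorial and invariant under $2$-cells; together with the analogous identification of modifications as natural transformations this yields
\[
\cc{H}om_{\ell}(\cc{E}l_Q, \cc{C}at)(k_1, k_E) \;\cong\; \cc{C}at(\pi_0 \cc{E}l_Q, E),
\]
which is also the adjunction $\pi_0 \dashv d$ applied to $E$ regarded as a discrete $2$-category.

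The last step is to restrict the bijection to $\sigma$-natural transformations on the left. Using the explicit formula $(\eta_f)_x = \theta_{(f,\, \mathrm{id}_{Qf(x)})}$ from Remark \ref{rem:oplaxdense}, the invertibility of $\eta_f$ for every $f \in \Sigma$ is equivalent to the associated functor $H \colon \pi_0 \cc{E}l_Q \to E$ inverting every morphism of the form $(f, \mathrm{id})$ with $f \in \Sigma$. Invoking the factorization $(f, \varphi) = (\mathrm{id}, \varphi) \circ (f, \mathrm{id})$ in $\cc{E}l_Q$ together with the fact that $(\mathrm{id}, \varphi)$ is already invertible whenever $\varphi$ is, as noted in the paragraph preceding the proposition, this is in turn equivalent to $H$ sending all of $\cart_\Sigma$ to isomorphisms. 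The universal property of the category of fractions then identifies such $H$ with functors out of $(\pi_0 \cc{E}l_Q)[\cart_\Sigma^{-1}]$. The only technical step is the unfolding of \textbf{LN0}, \textbf{LN1}, \textbf{LN2} for constant source and target in the middle paragraph; this is routine and presents no substantial obstacle, the remaining content being formal from the universal properties of $\pi_0$ and of the category of fractions.
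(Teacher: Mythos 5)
Your argument is correct, and it is worth noting that it supplies a proof where the paper gives none: the paper's ``proof'' of this proposition is a citation of \cite[I,7.11.4 i)]{GRAY} together with the notational reconciliation (the factorization $(f,\varphi)=(id,\varphi)(f,id)$) carried out in the paragraph preceding the statement. You instead verify the universal property directly from ingredients internal to the paper: Remark \ref{rem:oplaxdense} with target $k_E$ turns $\sigma$-natural transformations $Q\Rightarrow k_E$ (which are indeed the cones for the \emph{op}$\sigma$-colimit, the variance matching that of the lax density isomorphism) into lax natural transformations $k_1\Rightarrow k_E$ over $\cc{E}l_Q$; unwinding {\bf LN0}--{\bf LN2} for constant source and target identifies these with functors $\pi_0\cc{E}l_Q\to E$ and modifications with natural transformations (here one should note that {\bf LN2} forces $\theta_f=\theta_g$ whenever $f,g$ are joined by a zig-zag of $2$-cells, which is exactly what a functor out of $\pi_0$ requires); the formula $(\eta_f)_x=\theta_{(f,id)}$ translates $\sigma$-naturality into inversion of the arrows $(f,id)$, $f\in\Sigma$, hence of all of $\cart_\Sigma$ by the factorization the paper itself records; and Gabriel--Zisman finishes. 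Since each step is natural in $E$, the composite isomorphism $\cc{H}om_\sigma(\A,\Cat)(Q,k_E)\cong\Cat\big((\pi_0\cc{E}l_Q)[\cart_\Sigma^{-1}],E\big)$ is precisely the defining property of $\opcoLim{A\in\A}{QA}$. What your route buys is a self-contained argument expressed entirely in the paper's own machinery; what the paper's route buys is brevity and an explicit link to Gray's cartesian quasi-colimits. I see no gap beyond the routine unwinding you already flag as such.
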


\begin{remark} \label{rem:calculodecoplaxcolim}
 Let $\A \mr{Q} \Cat$ be a $2$-functor. 
 There is a construction \emph{dual} to the \mbox{$2$-category} of elements $\cc{E}l_Q$, that we will denote by $\Gamma_{Q}$, where the direction of $\varphi$ in Definition \ref {def:gammaP} is reversed. 
 Note that Definition \ref {def:cartsigma} can be easily adapted to define the $1$-subcategory $\cart_\Sigma$ of $\Gamma_{Q}$.
 Then a proof \emph{dual} to the one of Proposition \ref {prop:calculodeclaxcolim} 
 shows that, for a $2$-functor $\A \mr{Q} \Cat$, 
$$
 \coLim{A \in \A}{QA} = (\pi_0\Gamma_{Q}^{op})[\cart_\Sigma^{-1}]              
$$
 
 \vspace{-1ex}
 
 The interested reader can also see \cite[Proposition 1.17]{Bird} for the $\ell$-case, and \mbox{\cite[Theorem 5.2]{Data}} for the $p$-case, where the following formula is established:  
$$
\coLim{A \in \A}{QA} = \pi_0(\Gamma_{Q}^{op}[\cart_\Sigma^{-1}]),
$$

\vspace{-1ex}

\noindent In the previous formula, $\Gamma_{Q}^{op} [\cart_\Sigma^{-1}]$                                                                                                                                              is the $2$-category of fractions in the sense of \cite{P}.
Note that from the adjunction $\pi_0 \dashv d$ for any $1$-subcategory $\Sigma$ of a $2$-category $\A$ it follows $(\pi_0\A)[\Sigma^{-1}] = \pi_0(\A[\Sigma^{-1}])$, thus the two constructions are the same. 
\end{remark}

\begin{remark} \label{rem:lambdadelclaxcolim}
 Note that, since computing $\pi_0$ and the category of fractions doesn't change the objects, the objects of $\coLim{A \in \A}{QA}$ can be taken to be the objects of $\Gamma_Q$, which are 
 pairs $(x,A)$ with $A \in \A$, $x \in QA$. 
 By looking at the proof of \cite[I,7.11.1]{GRAY} (which has \mbox{\cite[I,7.11.4 i)]{GRAY}}, i.e. Proposition \ref {prop:calculodeclaxcolim} as corollary), we have a formula for the universal \mbox{$\sigma$-cone} $\lambda$, in particular on objects 
 $\lambda_A(x) = (x,A)$. Note that for each object 
 $c \in \coLim{A \in \A}{QA}$, there are $A \in \A$, $x \in QA$ such that $\lambda_A(x) = c$. 
\end{remark}

\begin{lemma} \label{lema:elemenplimconpeso}
   Let  $\A^{op} \mr{W} \Cat$, $\A \mr{P} \Cat$, and 
   consider the universal $w$-$\sigma$-cone \mbox{$W \Mr{\nu} \Cat(P-,C)$,} where $C = W \otimes_{\sigma} P$ (see \eqref{eq:defcpseudocolimconnu}), note that $\nu$ is a $\sigma$-natural transformation. 
   Then for each object $c \in C$, there exist $A \in \A$, $x \in WA$, $a \in PA$ such that $\nu_A(x)(a) = c$.
\end{lemma}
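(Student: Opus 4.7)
The plan is to reduce the statement to the known description of objects of conical $\sigma$-colimits given in Remark \ref{rem:lambdadelclaxcolim}, via the identification of weighted with conical $\sigma$-colimits provided by Theorem \ref{colimdaconico}. First, by Theorem \ref{colimdaconico},
$$C = W \otimes_{\sigma} P = \coLimconcartsigma{(x,A) \in \cc{E}l_W^{op}}{PA},$$
so $C$ may be identified with the conical $\sigma$-colimit (with respect to $\cart_\Sigma$) of the $2$-functor $\cc{E}l_W^{op} \xr{\Diamond_W^{op}} \A \xr{P} \Cat$.

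Next I would apply Remark \ref{rem:lambdadelclaxcolim} to this conical $\sigma$-colimit. Remark \ref{rem:lambdadelclaxcolim} says that the objects of a conical $\sigma$-colimit $\coLim{B \in \B}{QB}$ can be taken to be pairs $(q,B)$ with $B$ an object of the indexing $2$-category and $q \in QB$, and that the universal $\sigma$-cone $\lambda$ covers all objects via $\lambda_B(q) = (q,B)$. Specialising to $\B = \cc{E}l_W^{op}$ and $Q = P\Diamond_W^{op}$, this gives that for every $c \in C$ there exist $(x,A) \in \cc{E}l_W^{op}$ and $a \in PA$ with $\lambda_{(x,A)}(a) = c$.

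It remains to identify the universal conical $\sigma$-cone $\lambda$ with the given weighted $\sigma$-cone $\nu$. This identification is precisely the correspondence of Proposition \ref{key}, which is obtained by restricting the isomorphism of Remark \ref{rem:oplaxdense} to $H = \Cat(P-,C)$. Unwinding the explicit formula $\theta_{(x,A)} = \eta_A(x)$ from that remark, the $\sigma$-natural transformation $\nu : W \Rightarrow \Cat(P-,C)$ corresponds to the $\sigma$-natural transformation $k_1 \Rightarrow \Cat(P\Diamond_W^{op}-,C)$ whose component at $(x,A)$ is the functor $\nu_A(x) \in \Cat(PA,C)$. Since $\nu$ is universal by hypothesis, the correspondence sends it to the universal conical $\sigma$-cone, so this component equals $\lambda_{(x,A)}$. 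Therefore $c = \lambda_{(x,A)}(a) = \nu_A(x)(a)$, as required.

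The only real obstacle is the bookkeeping: carefully tracking the direction of naturality and the ``op'' involved in passing between $\nu$ (a $\sigma$-natural transformation out of $W$) and the conical $\sigma$-cone on $P\Diamond_W^{op}$, so as to read off the formula $\lambda_{(x,A)} = \nu_A(x)$ from Remark \ref{rem:oplaxdense}. Once this is done the conclusion is immediate.
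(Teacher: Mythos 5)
Your proposal is correct and follows essentially the same route as the paper's own proof: reduce to a conical $\sigma$-colimit via Theorem \ref{colimdaconico}, invoke Remark \ref{rem:lambdadelclaxcolim} (resting on the explicit construction of Remark \ref{rem:calculodecoplaxcolim}) to surject onto the objects of $C$ via the conical cone $\lambda$, and then identify $\lambda_{(x,A)}$ with $\nu_A(x)$ through the correspondence of Proposition \ref{key} and the formulas of Remark \ref{rem:oplaxdense}. The only cosmetic difference is that the paper additionally cites \eqref{eq:item9parac} and \S\,\ref{sub:terminology}, item \ref{item:conicalweighted} when unwinding that correspondence, which is exactly the ``bookkeeping'' you flag at the end.
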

\begin{proof}
By Theorem \ref{colimdaconico} we have 
$\,C  = \coLimconcartsigma{(x,A) \in \cc{E}l_W^{op}} {PA}$. In other words, $W \otimes_{\sigma} P$ is the $\sigma$-colimit of the \mbox{$2$-functor} $\cc{E}l_W^{op} \mr{\Diamond_W^{op}} \A \mr{P} \Cat$. We may compute this colimit using Remark \ref {rem:calculodecoplaxcolim}, then we have the colimit $\sigma$-cone $P \Diamond_W^{op} \Mr{\lambda} k_C$. By Remark \ref {rem:lambdadelclaxcolim}, there are $(x,A) \in \cc{E}l_W$, $a \in PA$ such that $\lambda_{(x,A)}(a) = c$.
 
 Now, as in the proof of Theorem \ref{colimdaconico}, the correspondence between the \s-colimit $\sigma$-cone $P \Diamond_W^{op} \Mr{\lambda} k_C$ and 
 the \s-colimit $w$-$\sigma$-cone  $W \Mr{\nu} \Cat(P-,C)$ is given by Proposition \ref{key} and \eqref{eq:item9parac}, and therefore by the formulas in Remark \ref{rem:oplaxdense} and in \S$\,$\ref{sub:terminology}, item \ref{item:conicalweighted}. Then we have $\nu_A(x)(a) = \lambda_{(x,A)}(a) = c$.
\end{proof}

\subsection{Pointwise limits} \label{sub:pointwise}

We analyze now the computation of weighted $\varepsilon$-limits in functor categories. The 
pointwise computation of arbitrary weighted $\varepsilon$-limits is a much more delicate matter than that of cotensors (Proposition \ref{prop:cotensorpointwise}), we give below a general result regarding the pointwise computation of $\alpha$-limits in $op \beta$-functor categories (with pseudo or strict diagrams). Note in particular the appearance of the ``op" prefix, this is reminiscent of the lifting of op-lax limits to the \mbox{$2$-category} of strict algebras and lax morphisms for a $2$-monad (\cite{Lack}), which has as a particular case the case $\gamma = \ell$ of Proposition \ref{prop:claxcolimptoapto}. 

\begin{remark} \label{rem:evfuntor}
 Let $\B,\C$ be $2$-categories, and 
 $\gamma \in \{s,\, p,\, \ell\}$. Then we have a $2$-functor 
 \mbox{$\B \xr{ev_{(-)}} \cc{H}om_\gamma(\cc{H}om_{op \gamma }(\B,\C),\C)$} given by the formulas, for $B \cellrd{f}{\mu}{g} B'$ in $\B$, $F \cellrd{\theta}{\rho}{\eta} G$ in $\cc{H}om_{op \gamma }(\B,\C)$,
 
 \begin{enumerate}
  \item  \label{one} $ev_B (F) = FB, \quad ev_B(\theta) = \theta_B, \quad ev_B(\rho) = \rho_B$
  \item $(ev_f)_F = Ff, \quad (ev_f)_{\theta} = \theta_f$
  \item $(ev_\mu)_F = F\mu$
 \end{enumerate}
 For each $B \in \cc{B}$ and any $\eps$ the definition in \ref{one}. determines $2$-functors 
$$
\cc{H}om_{\eps}(\B,\C) \xr{ev_B} \cc{C}, \hspace{3ex}
   \cc{H}om_{op\eps}(\B,\C) \xr{ev_B} \cc{C},
$$
that is, functors
$$
\cc{H}om_{\eps}(\B,\C)(F,\,G) \xr{ev_B} \cc{C}(FB,\, GB), \hspace{3ex}
\cc{H}om_{op\eps}(\B,\C)(F,\,G) \xr{ev_B} \cc{C}(FB,\, GB).
$$
All the verifications are straightforward. \cqd
\end{remark}

 \vspace{1ex}
 
Recall Notation \ref{epsilonnotation}. Given $2$ categories $\A,\; \B$, in the next proposition we let \mbox{$\alpha \in \Lbl_\A$,} 
$\beta \in \Lbl_\B$ be the label ``$s$", or labels corresponding to arbitrary 
$1$-subcategories of $\A$, $\B$ respectively. 
Among all the possible labels, the three labels 
$\ell,p,s$  always make sense for any $\A$ and $\B$. We will use the letter $\gamma$ to refer to these labels. With this in mind, we have:
\begin{proposition}\label{prop:claxcolimptoapto}
Let $\gamma \in \{\ell, p, s\}$, $\alpha \in  \Lbl_\A$, $\beta \in \Lbl_\B$, such that $\alpha \geq \gamma$, $\beta \geq \gamma$.
Then weighted $\alpha$-limits of $op\gamma$-diagrams are computed pointwise in the \mbox{$2$-functor} \mbox{$2$-categories} $\cc{H}om_{op\beta}(\B,\C)$ (in particular in $\cc{H}om_{op\gamma}(\B,\C)$), and are preserved by the inclusion $2$-functor $\cc{H}om_{op\gamma}(\B,\C) \mr{i} \cc{H}om_{op\beta}(\B,\C)$. This means precisely that given \mbox{$2$-functors} $\A \mr{W} \Cat$, \mbox{$\A \mr{F} \cc{H}om_{op\gamma}(\B,\C)$}, if the $\alpha$-limits $\{W,ev_B F\}_{\alpha}$ exist in $\C$ for each $B \in \B$, the definition $LB = \{W,ev_B F\}_{\alpha}$ determines a \mbox{$2$-functor} $\B \mr{L} \C$ which is the $\alpha$-limit $L = \{W,iF\}_{\alpha}$  of $iF$ weighted by $W$ in $\cc{H}om_{op\beta}(\B,\C)$. 
Denoting the composition $ev_B F$ by $(F-)B: \A \mr{} \C$,
we can write \mbox{$\{W,\, F\}_\alpha (B) = \{W, \, (F-)(B)\}_\alpha$.} Note that when $\gamma = \ell$, this forces that also $\alpha = \ell$ and $\beta = \ell$.  
\end{proposition}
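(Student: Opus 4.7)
The plan is to run the standard pointwise-limit argument: define $L$ objectwise, then check its universal property by rewriting both sides as iterated ends and swapping them via a Fubini interchange. Throughout I will use the notation $(F-)(B) := ev_B \circ F : \A \mr{} \C$.

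\emph{Step 1 (construction of $L$).} Set $LB := \{W,(F-)(B)\}_\alpha$, which exists in $\C$ by assumption. For $B \mr{f} B'$ in $\B$, the family $\{FA(f)\}_{A \in \A}$ together with the structural $2$-cells $F(r)_f$ of the $op\gamma$-natural transformations $F(r)$ (coming from $A \mr{r} A' \in \A$) assembles into a $\gamma$-natural transformation $(F-)(B) \Rightarrow (F-)(B')$: the $F(r)_f$ are invertible whenever $f$ lies in the $1$-subcategory defining $\gamma$ in $\B$, which is all of $\B$ when $\gamma \in \{s,p\}$ and reduces to identities when $\gamma = \ell$. Since $\alpha \geq \gamma$, Remark \ref{rem:limitsfuntorial} yields a canonical morphism $LB \mr{Lf} LB'$, with an analogous treatment of $2$-cells of $\B$; the functoriality of $L$ is then forced by uniqueness in the universal property.

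\emph{Step 2 (universal property).} For each $G \in \cc{H}om_{op\beta}(\B,\C)$, Proposition \ref{prop:cnatcomocend}, Corollary \ref{comeout}, Proposition \ref{coro:endcomoweighted} and Remark \ref{rem:limitsenCat} give
$$\cc{H}om_{op\beta}(\B,\C)(G, L) \;\cong\; op\beta\!\!\int_B \alpha\!\!\int_A \Cat\bigl(WA,\; \C(GB, FA(B))\bigr).$$
Swapping the two ends by Fubini and applying the same four ingredients in reverse order, this is naturally isomorphic to
$$\cc{H}om_\alpha(\A,\Cat)\bigl(W,\; \cc{H}om_{op\beta}(\B,\C)(G, iF-)\bigr),$$
naturally in $G$, which is exactly the defining property (Definition \ref{de:NE}) of $L = \{W, iF\}_\alpha$ in $\cc{H}om_{op\beta}(\B,\C)$. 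Preservation by $i : \cc{H}om_{op\gamma} \hookrightarrow \cc{H}om_{op\beta}$ is then immediate, since the pointwise formula for $LB$ does not depend on which ambient $2$-functor category we work in.

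\emph{Main obstacle.} The crux of the argument is the Fubini interchange of an $op\beta$-end over $\B$ with an $\alpha$-end over $\A$: these two ends carry genuinely different invertibility requirements attached to their respective $1$-subcategories, so the classical Fubini theorem for ordinary ends does not apply directly. The verification proceeds by unwinding both iterated ends as universal objects classifying mixed dicone data on $\A \times \B$ and observing that the $\alpha$-axioms constrain only arrows of $\A$ (with invertibility imposed on its distinguished subcategory) while the $op\beta$-axioms constrain only arrows of $\B$; the two families of axioms decouple, so both orderings represent the same category of dicones, whence the desired interchange.
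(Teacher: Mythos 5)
Your proposal is correct in outline but takes a genuinely different route from the paper. The paper never invokes a Fubini theorem: it defines $L$ as the composite $\B \xr{ev_{(-)}} \cc{H}om_{\gamma}(\cc{H}om_{op\gamma}(\B,\C),\C) \xr{F^*} \cc{H}om_{\gamma}(\A,\C)_+ \xr{\{W,-\}_{\alpha}} \C$ (your Step 1 is essentially this, though your invertibility bookkeeping is slightly off — what matters is not which arrows of $\B$ make $F(r)_f$ invertible but that the resulting transformation $(F-)(B)\Rightarrow (F-)(B')$ is $\gamma$-natural \emph{over $\A$}, so that $\{W,-\}_\alpha$ applies by Remark \ref{rem:limitsfuntorial}), and then constructs the limiting $w$-$\alpha$-cone by hand and verifies its one- and two-dimensional universal properties directly. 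Your Step 2 instead packages everything into a representability computation whose only substantive ingredient is the interchange $op\beta\!\!\int_B \alpha\!\!\int_A \cong \alpha\!\!\int_A op\beta\!\!\int_B$ for mixed iterated ends in $\Cat$. Two cautions here. First, this lemma appears nowhere in the paper, and the paper's own interchange result (Proposition \ref{sin:ginterchange}) is \emph{deduced from} the proposition you are proving, so you cannot cite anything of that kind without circularity; you must, as you propose, prove Fubini directly at the level of dicones. Your decoupling argument is correct in substance — both iterated ends classify the same data $(\{x_{AB}\},\{\theta^B_f\},\{\psi^A_g\})$ with the {\bf LD} axioms in each variable separately, the symmetric {\bf LDM} compatibility square, and invertibility conditions that refer only to the respective distinguished subcategories — but as written it is a sketch, and it is the entire content of the proof, so it should be written out. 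Second, you should make explicit that each isomorphism in your chain is an isomorphism of categories, $2$-natural in $G$: this is what delivers the two-dimensional aspect of the universal property, a point the paper treats with care precisely because it declines to assume $\C$ has tensors with $\mathbf{2}$. Granting these, your argument buys a cleaner, more uniform proof (the $2$-dimensional universal property comes for free from naturality in $G$), at the cost of having to establish a mixed-end Fubini theorem that the paper's more pedestrian verification avoids entirely.
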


\begin{proof}
 The definition of $L$ is given by the composition (see Remarks \ref{rem:limitsfuntorial} and \ref{rem:evfuntor}):
 $$
 L: \B \xr{ev_{(-)}} \cc{H}om_{\gamma}(\cc{H}om_{op\gamma}(\B,\C),\C) \xr{F^*} \cc{H}om_{\gamma}(\A,\C)_+ \xr{\{W,-\}_{\alpha}} \C
 $$ 
Note that by hypothesis the limits $\{W,ev_B F\}_{\alpha}$ exist for each $B$. It follows then that the composite $F^* ev_(-)$ actually lands in $\cc{H}om_{\gamma}(\A,\C)_+$, so $L$ is defined. 

 Clearly $LB = \{W,ev_B F\}_\alpha$.  For each $B \in \cc{B}$, let $W \Mr{\xi_B} \cc{C}(LB,\, (F-)B)$ be a $\alpha$-limit  $w$-$\alpha$-cone in $\cc{C}$, $LB = \{W,\, (F-)B\}_\alpha$, let us denote the components of $\xi_B$ by \mbox{$WA \xr{\xi_{B, A}} \cc{C}(LB,\, (FA)B)$.} Then:

\vspace{1ex}
 
 a) \emph{For each $A \in \cc{A}$, $WA \xr{\xi_{B, A}} \cc{C}(LB,\, (FA)B)$ are the components of an $s$-dinatural cone in the variable $B$.}
 
 \emph{proof:} Let $B \mr{h} B'$ in $\cc{B}$, and consider the following diagram as in the proof of Remark \ref{rem:limitsfuntorial}:
$$
\xymatrix@C=10ex
     {
        W \ar@{=>}[d]_{\xi_{B}} \ar@{=>}[r]^>>>>>>>>>{\xi_{B'}} 
      & \C(LB',\, (F-)B') \ar@{=>}[d]^{(Lh)^*} 
      & LB' 
      \\
        \C(LB,\, (F-)B) \ar@{=>}[r]^{((F-)h)_*} 
      & \C(LB,\, (F-)B') 
      & LB \ar[u]^{\exists ! \, Lh}
     }
$$ 
Then, evaluating at $A$ finishes the proof. 
\hfill \emph{end proof of a)} $\Box$
 
  \vspace{1ex}
  
  b) \emph{The arrows in a) determine a $w$-$\alpha$-cone:}
   
 $WA \mr{\xi_A} \cc{H}om_{s}(\B,\C)(L,\,FA) \mr{i} 
                                       \cc{H}om_{op\beta}(\B,\C)(L,\,FA)$, \;
 $W \Mr{\xi} \cc{H}om_{op\beta}(\B,\C)(L,\,F)$.
 
 \emph{proof:} Let $A \mr{g} A'$ in $\cc{A}$, and consider the following diagram:
 $$
 \xymatrix@C=5ex
    {
      WA \ar[r]^>>>>>>{\xi_A} 
         \ar[d]_{Wg} 
         \ar@{}[dr]|{\Downarrow \xi_g} 
    & \cc{H}om_{op\gamma}(\B,\C)(L,\,FA) 
         \ar[d]^{(Fg)_*} 
         \ar[r]^<<<<<{i}
         \ar@{}[dr]|{\equiv}
    & \cc{H}om_{op\beta}(\B,\C)(L,\,FA) 
         \ar[d]^{(Fg)_*} 
         \ar[r]^<<<<<{ev_B}
         \ar@{}[dr]|{\equiv}
    & \cc{C}(LB,\, (FA)B) 
         \ar[d]^{((Fg)_B)_*} 
    \\ 
      WA' \ar[r]^>>>>>>{\xi_{A'}} 
    & \cc{H}om_{op\gamma}(\B,\C)(L,\,FA') 
         \ar[r]^<<<<<{i}
    & \cc{H}om_{op\beta}(\B,\C)(L,\,FA') 
         \ar[r]^<<<<<{ev_B}
    & \cc{C}(LB,\, (FA')B)
    } 
 $$
 By Proposition \ref{prop:cnatcomocend} it follows there are arrows $\xi_A$, 
 $\xi_A'$ such that $ev_B \, \xi_A = \xi_{B,A}$, 
 \mbox{$ev_B \, \xi_{A'} = \xi_{B,A'}$}. By hypothesis, for each 
 $B$, $\xi_{B}$ is $\alpha$-natural (in the variable $A$). Thus there is a $2$-cell
  $((Fg)_B)_* \, \xi_{B,A} \Mr{(\xi_B)_{g}} \xi_{B,A'} \, Wg$.
  It is straightforward to check that $((Fg)_B)_* \, \xi_{B,A}$ and   $\xi_{B,A'} \, Wg$ are $op \gamma$-dicones for the $2$-functor 
$\cc{C}(L-,\, (FA')-)$, and the $(\xi_B)_g$ determine a morphism of dicones. Then, the existence of $\xi_g$ as indicated in the diagram,  
  \mbox{$(\xi_B)_g = ev_B \, \xi_g$,} follows from the isomorphism of categories 
$$
\cc{C}at(WA,\, \cc{H}om_{op\gamma}(\B,\C)(L-,\,(FA')-)) \mr{\cong} 
                                \cc{D}icones_{op\gamma}(WA, \, \cc{C}(L, FA')).
$$ 
This shows we have a $w$-$\alpha$-cone:
$W \Mr{\xi} \cc{H}om_{op\gamma}(\B,\C)(L,\,F-)$, thus also one into 
$\cc{H}om_{op\beta}(\B,\C)(L,\,F-)$.

The axioms of \mbox{$\alpha$-naturality} for $\xi_g$ can be checked using the corresponding axioms for $(\xi_B)_g$ and the isomorphism of categories above.    
\hfill \emph{end proof of b)} $\Box$

 \vspace{1ex}                                                           
 
 c)  \emph{The $w$-$\alpha$-cone in b) is a $\alpha$-limit cone in 
 $\cc{H}om_{op\beta}(\B,\C)$, $L = \{W,\, F\}_\alpha$.} 
 
 \emph{proof:} It only remains to show the universal property. Let 
 $\cc{B} \mr{H} \cc{C}$ be a $2$-functor and  \mbox{$W \Mr{\rho} \cc{H}om_{op\beta}(\B,\C)(H,\,F-)$} be a $w$-$\alpha$-cone. We have:
$$
\xymatrix@C=10ex
     {
        W \ar@{=>}[rd]+<-9ex, +1ex>^>>>>>>>>{\rho_{B}} 
          \ar@{=>}[r]^>>>>>>>>>{\xi_{B}} 
      & \C(LB,\, (F-)B) \ar@{=>}[d]^{(\eta_B)^*} 
      & LB 
      \\
      &  \C(HB,\, (F-)B)   
      & HB \ar[u]^{\exists ! \, \eta_B}
     }
$$  
We now prove that $\eta_B$ is $op\beta$-natural in the variable $B$. Let 
$B \mr{h} B'$ in $\cc{B}$. Consider the isomorphism in the definition of $\xi_{B'}$, 
$$
(1) \hspace{10ex} \cc{C}(HB,\, LB') \xr{(\xi_{B'})^*} \cc{H}om_\alpha(\cc{A},\, \cc{C}at)(W,\, \cc{C}(HB,\, (F-)B'))      
$$
We have the $\alpha$-natural structural $2$-cell $\eta_h$ defined as follows:

\vspace{1ex}

$(Hh)^*(\eta_{B'})^*\xi_{B'} = (Hh)^*\rho_{B'} \;\Mr{\rho_h}\; ((F-)h)_*\rho_B = 
((F-)h)_*(\eta_B)^*\xi_B = $ 

\hfill $= (\eta_B)^*((F-)h)_*\xi_B = (\eta_B)^*(Lh)^*\xi_{B'}$.

\vspace{1ex}

We suggest the reader to use the diagram below to check the equations in this definition.
$$
\xymatrix@C=10ex@R=8ex
  {
   & \cc{C}(HB,\, (F-)B) \ar@/^4ex/@<1.5ex>[rrdd]^{((F-)h)_*}
   \\
   & \cc{C}(LB,\, (F-)B) \ar@/^2ex/@<1.5ex>[rd]^{((F-)h)_*}
                         \ar[u]_{(\eta_B)^*}
   \\
   W \ar@/^4ex/@<1.5ex>[ruu]+<-7.5ex, -0.5ex>^{\rho_B}
     \ar@/^1ex/@<1.5ex>[ru]+<-7.5ex, -0.8ex>^{\xi_B}
     \ar@/_1ex/@<-1.5ex>[rd]+<-8ex, +1ex>^{\xi_{B'}}
     \ar@/_4ex/@<-1.5ex>[rdd]+<-8ex, +1ex>^{\rho_{B'}}
     \ar@/_6ex/@<-1.5ex>[rddd]+<-8ex, +1ex>^{\rho_B}
   && \cc{C}(LB,\, (F-)B')  \ar[r]^{(\eta_{B})^*}
   & \cc{C}(HB,\, (F-)B')
   \\
   & \cc{C}(LB',\, (F-)B') \ar[d]^{(\eta_{B'})^*}
                           \ar@/_2ex/@<-1.5ex>[ru]^{(Lh)^*}
   & {\Uparrow (\eta_h)^*}
   \\
   & \cc{C}(HB',\, (F-)B') \ar@/_5ex/@<-1.5ex>[rruu]+<-4ex,               
                                                -0.5ex>^{(Hh)^*}
                           \ar@{}[d]|{\Downarrow \rho_h}
   \\
   & \cc{C}(HB,\, (F-)B)  \ar@/_7ex/@<-1.5ex>[rruuu]+<0ex,  
                                             -0.5ex>^{((F-)h)_*}
  }
$$
Thus, we have a $2$-cell $(\eta_{B'}\,Hh)^*\,\xi_{B'} \;\Mr{\rho_h}\; (Lh\,\eta_B)^*\,\xi_{B'}$.
By the isomorphism (1) above, it follows that there exist a unique
$
\xymatrix
     {
       HB \ar[r]^{\eta_B} \ar[d]_{Hh} 
     & LB \ar[d]^{Lh}
     \\
       HB' \ar[r]^{\eta_{B'}}
           \ar@{}[ur]|{\Uparrow \eta_h}
     & LB' 
     }
$
such that $\rho_h = (\eta_h)^* \xi_{B'}$.
The $\beta$-naturality axioms for $\eta$ follow from the $\beta$-dicone axioms for $\rho$ and the $op\beta$-naturality of $\rho_A(x)$, $A \in \cc{A}$, $x \in WA$. We leave to the reader the verification of the $2$-dimensional aspect of the universal property. \hfill \emph{end proof of c)} $\Box$

\vspace{1ex}

 This finishes the proof of the proposition. Note that if $\C$ has tensor products with \mbox{{\bf 2}$\; = \{0 \to 1\}$,} by Proposition \ref{prop:cotensorpointwise} so does $Hom_{op\beta}(\B,\C)$ and thus (as in \cite[p.306]{K2}) the $2$-dimensional aspect of the universal property follows from the $1$-dimensional one. However we think it is pertinent not to assume that $\C$ has tensors, for example, in practice, $\C$ may only have all finite conical $p$-limits. 
\end{proof}
\begin{remark} \label{limitinco}
Note that to compute pointwise $\alpha$-limits in the 
$\cc{H}om_{op\beta}(\cc{B},\, \cc{C})$ categories we use $\alpha$-limits in 
$\cc{C}$. Since $\cc{H}om_{op \beta }(\B,\C) \cong \cc{H}om_{\beta }(\B^{co},\C^{co})$ \mbox{(\S$\,$\ref{sub:terminology}, item \ref{item:coco}),} to compute $\alpha$-limits in $\cc{H}om_{\beta}(\B,\C)$ we use $\alpha$-limits in $\C^{co}$, that is 
$op\alpha$-limits in $\C$ (Remark \ref {rem:4en1}).
\end{remark}
Since for $\beta = $ $p$ or $s$  we have isomorphisms $\cc{H}om_{op \beta }(\B,\C) \cong \cc{H}om_{\beta }(\B,\C)$, it follows: 
\begin{corollary}\label{coro:claxcolimptoapto}
Weighted $\sigma$-limits are computed pointwise in the $2$-functor $2$-categories $\cc{H}om_s(\B,\C)$ and $\cc{H}om_p(\B,\C)$. 
The inclusion $\cc{H}om_s(\B,\C) \mr{i} \cc{H}om_p(\B,\C)$ preserves these limits, we have $i \{W,F\}_\sigma = \{W,iF\}_\sigma$. \qed
\end{corollary}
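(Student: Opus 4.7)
The strategy is to reduce the corollary to Proposition \ref{prop:claxcolimptoapto}. The proposition is stated for limits in $\cc{H}om_{op\beta}(\B,\C)$, whereas the corollary concerns $\cc{H}om_s(\B,\C)$ and $\cc{H}om_p(\B,\C)$. The crucial observation is that for $\beta \in \{s, p\}$ there is a canonical isomorphism of $2$-categories $\cc{H}om_{op\beta}(\B,\C) \cong \cc{H}om_{\beta}(\B,\C)$: when $\beta = s$ every structural $2$-cell $\theta_f$ is an identity so the prefix ``$op$'' is vacuous, and when $\beta = p$ every $\theta_f$ is invertible so the rule $\theta_f \mapsto \theta_f^{-1}$ defines such an isomorphism. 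Moreover these isomorphisms are the identity on objects and on underlying $1$-cells, and therefore they commute with the evaluation $2$-functors $ev_B$ of Remark \ref{rem:evfuntor}.

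To establish the pointwise formula in $\cc{H}om_p(\B,\C)$, I would transport a diagram $\A \mr{F} \cc{H}om_p(\B,\C)$ along this isomorphism to a diagram $\A \mr{\widetilde{F}} \cc{H}om_{op\, p}(\B,\C)$, and then invoke Proposition \ref{prop:claxcolimptoapto} with $\alpha = \sigma$, $\gamma = p$, $\beta = p$ (the hypotheses $\alpha \geq \gamma$ and $\beta \geq \gamma$ both hold). This produces the $2$-functor $L$ with $LB = \{W, ev_B \widetilde{F}\}_{\sigma}$ as the $\sigma$-limit of $\widetilde{F}$ in $\cc{H}om_{op\, p}(\B,\C)$; transporting back via the isomorphism identifies $L$ with the $\sigma$-limit of $F$ in $\cc{H}om_p(\B,\C)$ and exhibits it as $\{W,F\}_{\sigma}(B) = \{W, (F-)B\}_{\sigma}$. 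The case of $\cc{H}om_s(\B,\C)$ is entirely analogous, taking $\gamma = \beta = s$.

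Finally, for the preservation statement, I would apply Proposition \ref{prop:claxcolimptoapto} once more with the asymmetric choice $\gamma = s$, $\beta = p$ (again $\alpha = \sigma \geq s$ and $p \geq s$). The proposition directly states that the inclusion $\cc{H}om_{op\, s}(\B,\C) \mr{i} \cc{H}om_{op\, p}(\B,\C)$ preserves the limit, i.e.\ the $\sigma$-limit of $iF$ is computed by the same pointwise formula as that of $F$. Composing with the two isomorphisms above turns this into the desired equality $i\{W,F\}_{\sigma} = \{W, iF\}_{\sigma}$ in $\cc{H}om_p(\B,\C)$. The only point requiring care is that the isomorphisms $\cc{H}om_{op\, \beta}(\B,\C) \cong \cc{H}om_{\beta}(\B,\C)$ genuinely intertwine the evaluation $2$-functors, which is a routine check since inversion of $2$-cells is compatible with horizontal and vertical composition; I do not anticipate any substantial obstacle beyond this bookkeeping.
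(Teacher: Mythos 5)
Your proposal is correct and follows essentially the same route as the paper, which deduces the corollary from Proposition \ref{prop:claxcolimptoapto} precisely via the isomorphisms $\cc{H}om_{op\beta}(\B,\C) \cong \cc{H}om_{\beta}(\B,\C)$ for $\beta \in \{s,p\}$ (the paper leaves the choices of $\gamma$, $\beta$ and the transport along these isomorphisms implicit, whereas you spell them out). The extra care you take in checking that the isomorphisms intertwine the evaluation $2$-functors is exactly the routine bookkeeping the paper suppresses.
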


\begin{remark} \label{rem:unosiunono}
 In general, $s$-limits are not computed pointwise in $\cc{H}om_p(\A,\B)$, see \mbox{\cite[Example 6.2]{BKP}} for a counterexample. The obstruction in the proof of Proposition \ref{prop:claxcolimptoapto} if one tries to prove this statement is that the definition $LB$ in the beginning of the proof would not be functorial in the variable $B$, as we do not have a $2$-functor $\cc{H}om_p(\A,\C) \xr{\{W,-\}_s} \C$. 
 \end{remark} 

\subsection{Interchange formulas}
\label{sub:interchange}

As usual, the commutativity of limits with limits follows from the pointwise computation. Recall the notation considered before Proposition \ref{prop:claxcolimptoapto}.

\begin{proposition} \label{sin:ginterchange}  
Let $\gamma \in \{\ell, p, s\}$, $\alpha \in  \Lbl_\A$, $\beta \in \Lbl_\B$, such that $\alpha \geq \gamma$, $\beta \geq \gamma$. 
Let $\cc{A} \mr{F_l} Hom_{op \gamma}(\B,\C)$,  
$\cc{B} \mr{F_r} Hom_{\gamma}(\A,\C)$ be $2$ functors in correspondence as in  \mbox{\S$\,$\ref{sub:terminology}, item \ref{cartesianHomp}.}
Consider weights $\A \mr{W_l} \Cat$, $\B \mr{W_r} \Cat$. 
Then, the following holds: 
$$
\{W_l,\, \{W_r,\, F_r\}_{\beta} \}_\alpha \; \cong \; 
\{W_r,\, \{W_l,\, F_l\}_\alpha \}_{\beta} $$
\end{proposition}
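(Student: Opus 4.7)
The plan is to use the pointwise-computation theorem for weighted limits in $2$-functor $2$-categories (Proposition \ref{prop:claxcolimptoapto}) once on each side of the desired isomorphism, and then recognise that both resulting iterated limits in $\C$ encode the same universal property.

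Under the correspondence of \S\ref{sub:terminology}, item \ref{cartesianHomp}, $F_l$ and $F_r$ are two incarnations of a single ``quasifunctor of two variables'' $H$ with $H(A,B) = F_l A (B) = F_r B (A)$. First I apply Proposition \ref{prop:claxcolimptoapto} to $F_l : \A \to \cc{H}om_{op\gamma}(\B,\C)$: the $\alpha$-weighted limit $\{W_l, F_l\}_\alpha$ is computed pointwise and satisfies $\{W_l, F_l\}_\alpha(B) = \{W_l, F_r B\}_\alpha$ in $\C$, using that $(F_l{-})(B) = F_r B$. Dually, after transporting through the isomorphisms of \S\ref{sub:terminology}, items \ref{item:coco} and \ref{item:pirulo} together with Remark \ref{limitinco}, the same proposition applied to $F_r$ yields $\{W_r, F_r\}_\beta(A) = \{W_r, F_l A\}_\beta$ in $\C$. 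Hence both sides of the desired formula become iterated weighted limits in $\C$ of the two-variable diagram $H$:
\begin{align*}
\{W_l, \{W_r, F_r\}_\beta\}_\alpha &= \{W_l,\, A \mapsto \{W_r, F_l A\}_\beta\}_\alpha, \\
\{W_r, \{W_l, F_l\}_\alpha\}_\beta &= \{W_r,\, B \mapsto \{W_l, F_r B\}_\alpha\}_\beta.
\end{align*}

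To conclude, I apply the representable $2$-functor $\C(E, -)$ to each side and use Corollary \ref{comeout} twice to pass it past both layers of limits; this reduces the statement to an isomorphism in $\Cat$. By Propositions \ref{cathas} and \ref{prop:cnatcomocend}, both sides then take the form of iterated ends involving $\Cat(W_l A, \Cat(W_r B, \C(E, H(A,B))))$, whose coincidence is a Fubini-type verification: unpacking the $\alpha$- and $\beta$-dinaturality axioms shows that both describe the same compatible families of arrows, simultaneously $\alpha$-natural in $A$ and $\beta$-natural in $B$. The conclusion in $\C$ then follows from the Yoneda lemma.

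The step I expect to be the main obstacle is justifying the pointwise computation of $\{W_r, F_r\}_\beta$ in the ``non-op'' functor $2$-category $\cc{H}om_\gamma(\A,\C)$: Proposition \ref{prop:claxcolimptoapto} is phrased for $\cc{H}om_{op\beta}$ categories, and applying it to $F_r$ requires passing through the $(-)^{co}$ duality and checking that the weights and indexing categories transport correctly. Once this is in hand, the Fubini interchange of iterated ends is conceptually routine but notationally delicate, as it amounts to comparing two equivalent presentations of the modifications associated to $H$.
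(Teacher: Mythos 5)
Your overall strategy --- compute both inner limits pointwise, reduce to $\Cat$, and finish with a Fubini interchange of iterated ends --- differs from the paper's, and as written it has a gap at exactly the step you flag as the ``main obstacle.'' First, the pointwise formula $\{W_r,F_r\}_\beta(A) = \{W_r, F_lA\}_\beta$ is not what Proposition \ref{prop:claxcolimptoapto} gives you: $F_r$ lands in the \emph{non-op} functor $2$-category $\cc{H}om_{\gamma}(\A,\C)$, and by Remark \ref{limitinco} the pointwise values of a $\beta$-limit there are $op\beta$-limits in $\C$ (of the $(-)^{co}$-transported data), not plain $\beta$-limits. You would need to show that, for the particular diagrams $ev_A F_r$, this op-twisted limit coincides with $\{W_r,F_lA\}_\beta$; that is not automatic and is not checked. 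Second, and more seriously, the ``Fubini-type verification'' for the iterated ends is essentially the entire content of the proposition: the two-variable gadget $H$ is only a quasifunctor (\S$\,$\ref{sub:terminology}, item \ref{cartesianHomp}), the two end-variables carry different labels $\alpha$ and $\beta$ with differently oriented structural $2$-cells, and no Fubini theorem for $\eps$-ends is established anywhere in the paper. Asserting that ``unpacking the dinaturality axioms'' identifies the two sides is not a proof; it is precisely the computation the paper's argument is designed to avoid.

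The paper's proof is asymmetric and avoids both issues. After reducing to $\C = \Cat$, it writes the left-hand side as $\cc{H}om_{\alpha}(\A,\Cat)(W_l, \{W_r,F_r\}_\beta)$ by Proposition \ref{cathas}, moves this representable past the $\beta$-limit using Corollary \ref{comeout} (representables preserve weighted $\eps$-limits, which is tautological from the definition of limit), obtaining $\{W_r,\, \cc{H}om_{\alpha}(\A,\Cat)(W_l, F_r-)\}_\beta$, and then identifies $\cc{H}om_{\alpha}(\A,\Cat)(W_l,F_r-)$ with $\{W_l,F_l\}_\alpha$ by a single application of the pointwise Proposition \ref{prop:claxcolimptoapto} --- applied to $F_l$ only, where it holds directly because $F_l$ lands in $\cc{H}om_{op\gamma}(\B,\C)$. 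If you want to salvage your route, the cleanest repair is to replace your Fubini step and the second pointwise computation by exactly this use of Corollary \ref{comeout}.
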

\begin{proof}
By the usual reasoning it suffices to show it for the case $\C = \Cat$.
We have the following isomorphisms given by Proposition \ref{cathas} and Corollary \ref{comeout}:
$$
\{W_l,\, \{W_r,\, F_r\}_{\beta} \}_\alpha  \cong 
Hom_{\alpha}(\A,\Cat)(W_l,\, \{W_r,\, F_r\}_{\beta}) 
\cong \{W_r,\, Hom_{\alpha}(\A,\Cat)(W_l,\, F_r-) \}_{\beta}
$$
We conclude the proof by showing that 
$\{W_l,\, F_l\}_\alpha =  Hom_{\alpha}(\A,\Cat)(W_l,\, F_r-)$. By Proposition  \ref{prop:claxcolimptoapto} we can compute pointwise:

\hfill
$
\{W_l,\, F_l\}_\alpha(B) \;=\;
\{W_l,\, (F_l-)(B)\}_\alpha \;=\;
Hom_{\alpha}(\A,\Cat)(W_l,\, (F_l-)(B)) \;=\;
$
$
Hom_{\alpha}(\A,\Cat)(W_l,\, F_rB(-)) =
Hom_{\alpha}(\A,\Cat)(W_l,\, F_r-)(B).
$

\vspace{1ex}

The second equality is justified  by Proposition \ref{cathas}, the third one follows from the formulas in 
\S$\,$\ref{sub:terminology}, item \ref{cartesianHomp}, and the last is clear.
\end{proof}

It is convenient to state with a slightly different notation a particular case which we will need in this paper;
 
\begin{proposition} \label{sin:interchange} 
Let $\sigma \in \Lbl_\cc{I}$. Consider a 
$2$-functor 
$\cc{I} \xr{F_{(-)}} Hom_p(\A,\B)$
and a weight 
$\A \mr{W} \Cat$. Then the following holds: 
$$ 
\{W,\Lim{i \in \cc{I}}{F_i}\}_p = \Lim{i \in \cc{I}}{\{W,F_i\}_p}.
$$

\vspace{-5ex}

\cqd 
\end{proposition}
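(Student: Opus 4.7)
The plan is to deduce this proposition as a direct specialization of Proposition \ref{sin:ginterchange}, identifying the right parameters and then simplifying using the pointwise computation of $p$-limits in $\cc{H}om_p$ categories (Corollary \ref{coro:claxcolimptoapto}).

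First, I would set up the parameters: take $\gamma = p$, $\alpha = p$, and $\beta = \sigma$ (our given label in $\Lbl_\cc{I}$). The hypotheses of Proposition \ref{sin:ginterchange} require $\alpha \geq \gamma$ and $\beta \geq \gamma$, i.e.\ $p \geq p$ (trivial) and $\sigma \geq p$. The latter holds for every $\sigma \in \Lbl_\cc{I}$ because, in the order of Notation \ref{epsilonnotation}, $p = \sigma^{\cc{I}_0}$ is the minimum among all $\sigma^\Sigma$ labels (any $\Sigma \subseteq \cc{I}_0$ gives $\sigma^\Sigma \geq \sigma^{\cc{I}_0} = p$). Then I let $F_r = F_{(-)} \colon \cc{I} \to \cc{H}om_p(\cc{A},\cc{B})$ and, via the correspondence of \S\ref{sub:terminology}, item \ref{cartesianHomp} (noting that $op\, p = p$), the associated $F_l \colon \cc{A} \to \cc{H}om_p(\cc{I},\cc{B})$, with $F_l(A)(i) = F_i(A)$. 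The weights are $W_l = W$ and $W_r = k_1$, so that $\{k_1, F_r\}_\sigma = \Lim{i \in \cc{I}}{F_i}$ by the definition of conical $\sigma$-limit (Definition \ref{def:claxlimit}).

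Plugging these data into Proposition \ref{sin:ginterchange} yields
\[
\{W,\, \Lim{i \in \cc{I}}{F_i}\}_p \;=\; \{W_l,\, \{W_r, F_r\}_\beta\}_\alpha \;\cong\; \{W_r,\, \{W_l, F_l\}_\alpha\}_\beta \;=\; \{k_1,\, \{W, F_l\}_p\}_\sigma.
\]
To finish, I would recognize the right-hand side as $\Lim{i \in \cc{I}}{\{W, F_l\}_p(i)}$ and compute $\{W, F_l\}_p$ pointwise in $\cc{H}om_p(\cc{I},\cc{B})$ via Corollary \ref{coro:claxcolimptoapto}: for each $i \in \cc{I}$,
\[
\{W, F_l\}_p(i) \;=\; \{W,\, (F_l-)(i)\}_p \;=\; \{W,\, F_i\}_p,
\]
where the last equality uses $F_l(A)(i) = F_i(A)$ from the \ref{cartesianHomp}-correspondence. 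Substituting back gives the desired identity.

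The only delicate point is checking the order condition $\beta \geq \gamma$ — but once one observes that the \emph{diagram} $F_{(-)}$ takes values in the pseudo-functor category (so the ambient is $\gamma = p$) while the \emph{outer} $\sigma$-limit is taken over $\cc{I}$ with the coarser label $\sigma \geq p$, the application of the general interchange formula is automatic, and no genuine computation is needed beyond invoking the pointwise theorem. No real obstacle is expected; the task is essentially bookkeeping of labels.
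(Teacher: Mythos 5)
Your proof is correct and matches the paper's intent exactly: the paper states Proposition \ref{sin:interchange} as ``a particular case'' of Proposition \ref{sin:ginterchange} with no further argument, and your choice of parameters ($\gamma=\alpha=p$, $\beta=\sigma$, $W_r=k_1$, $W_l=W$) together with the pointwise computation of Corollary \ref{coro:claxcolimptoapto} is precisely the intended specialization. The label bookkeeping is also right ($p=\sigma^{\cc{I}_0}$ is minimal among the $\sigma^\Sigma$, so $\sigma\geq p$ always holds), so there is nothing to add.
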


\vspace{1ex}

The commutativity of weighted pseudolimits with conical $\sigma$-colimits, which is (as usual) a much deeper subject, is treated in \cite{Otropaper} for $\Cat$-valued $2$-functors. We recall now this result noting that, while in \cite[Theorem 3.2]{Otropaper} it is stated for a $2$-functor \mbox{$\cc{I} \mr{} \cc{H}om_s(\A,\Cat)$,} since pseudolimits are also computed pointwise in $\cc{H}om_p(\A,\Cat)$  a careful inspection of the proof yields:

\begin{theorem} \label{teo:conmutan}
Let $\cc{I}$ be a \s-filtered $2$-category, $\A$ a $2$-category. Consider a 
$2$-functor 
$\cc{I} \xr{F_{(-)}} Hom_p(\A,\Cat)$
and a finite weight (see Definition  \ref{def:leftexact})  
$\A \mr{W} \Cat$. Then 
the canonical comparison functor 
$$
\coLim{i \in \cc{I}}{ \;{}_{bi} \!\{ W,F_i \} }_p \mr{\approx} \;{}_{bi} \!\{ W,\coLim{i \in \cc{I}}{F_i} \}_p 
$$
\noindent is an equivalence of categories. \qed
\end{theorem}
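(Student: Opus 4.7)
The plan is to reduce this theorem directly to \cite[Theorem 3.2]{Otropaper}, which establishes the same statement under the stricter hypothesis that $F_{(-)}$ lands in $\cc{H}om_s(\A,\Cat)$, by examining where that proof uses strictness and verifying that every step transfers to the pseudonatural setting via pointwise computation.

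First I would set up the relevant pointwise descriptions. Let $L = \coLim{i \in \cc{I}}{F_i}_p$ in $\cc{H}om_p(\A,\Cat)$; by the dual of Corollary \ref{coro:claxcolimptoapto}, $L$ is computed pointwise, so $L(A) = \coLim{i}{F_i(A)}_p$ in $\Cat$ for each $A \in \A$. On the other hand, by Proposition \ref{coro:endcomoweighted} together with Proposition \ref{cathas}, for any 2-functor $G\colon \A \to \Cat$ the category $\;{}_{bi} \!\{W,G\}_p$ admits the pseudoend presentation $p\!\oint_A \Cat(WA, GA)$, and likewise this description applies to each $\;{}_{bi} \!\{W,F_i\}_p$ uniformly in $i$. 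This reduces both sides of the comparison to expressions built out of the pointwise evaluations $F_i(A)$ together with the pseudoend and pseudocolimit constructions in $\Cat$ itself.

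Second, I would walk through the proof in \cite{Otropaper} and locate the precise moments where $F_{(-)}\colon \cc{I}\to \cc{H}om_s(\A,\Cat)$ is used: these are exactly the steps that invoke pointwise computation of weighted pseudolimits and of $\sigma$-filtered pseudocolimits inside the functor 2-category. Since both pseudolimits (Corollary \ref{coro:claxcolimptoapto}) and pseudocolimits (its dual) are also computed pointwise in $\cc{H}om_p(\A,\Cat)$, every such invocation can be replaced by the corresponding statement for $\cc{H}om_p$ without modifying the combinatorics of the argument. The constructive parts of the proof in \cite{Otropaper}---namely, the use of $\sigma$-filteredness to move finite weighted pseudolimits past $\sigma$-colimits in $\Cat$, and the explicit description of the quasi-inverse equivalence using Proposition \ref{prop:calculodeclaxcolim}---take place entirely inside $\Cat$ and are insensitive to whether the $i$-variable transition maps $F_\alpha$ are strict or merely pseudonatural.

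Third, I would check that the canonical comparison functor, constructed via the universal property of pseudocolimits applied componentwise to the cocone $\;{}_{bi} \!\{W,F_i\}_p \to \;{}_{bi} \!\{W,L\}_p$ induced by the colimit injections $F_i \Rightarrow L$, has the same explicit formula on objects and morphisms in both the strict and pseudo cases. Once this is observed, the equivalence follows. The main obstacle is bookkeeping: one must verify that the structural pseudonatural isomorphisms arising from $F_\alpha$ do not obstruct the construction of the quasi-inverse in \cite{Otropaper}, which in the strict case worked with genuine equalities; this is a routine but careful diagram chase, justified by the fact that the quasi-inverse there is built from representatives of equivalence classes in a category of fractions, where invertible 2-cells can be absorbed just as easily as identities.
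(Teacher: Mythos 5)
Your proposal is correct and follows essentially the same route as the paper: the paper also derives Theorem \ref{teo:conmutan} directly from \cite[Theorem 3.2]{Otropaper}, observing that the only place the strictness of the diagram $\cc{I} \mr{} \cc{H}om_s(\A,\Cat)$ enters is through pointwise computation, which by Corollary \ref{coro:claxcolimptoapto} (and its dual) holds equally in $\cc{H}om_p(\A,\Cat)$. Your write-up simply makes explicit the ``careful inspection of the proof'' that the paper alludes to.
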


\section{$\sigma$-filtered $2$-categories}\label{sigma2f}

We fix throughout this section a $2$-category $\cc{C}$ and a 
$1$-subcategory $\Sigma$ of $\cc{C}$ which contains all the objects. Note that this amounts to a family, that we will also denote by $\Sigma$, of arrows of $\cc{C}$
such that all the identities belong to $\Sigma$ and $\Sigma$ is closed by composition. We don't require $\Sigma$ to contain the isomorphisms or the equivalences of $\cc{C}$.

\subsection{The notion of \s-filtered} \label{sub:sigmafiltered}

 Recall that a non empty $1$-category is filtered if and only if every finite diagram has a cone (see \cite[\S$\,$VII.6]{MMcL}). This happens if and only if two particular diagrams, corresponding to binary products and equalizers, have a cone (the two usual axioms of filtered category). 
 
 In the $2$-dimensional case the notion of filteredness has been considered under the name of \emph{bifiltered} in \cite{K}, and \emph{$2$-filtered} in \cite{DS}, and it holds if and only if every finite diagram has a  \emph{pseudocone} (see \cite{Data}).  
 
 We introduce now the concept of $2$-filteredness with respect to a family $\Sigma$. It generalizes the definition of bifiltered in \cite{K}, which corresponds to the case where $\Sigma$ consists of all the arrows of $\A$.

\begin{notation}
We add a circle to an arrow $\xymatrix{\cdot \ar[r]|{o} & \cdot}$ to indicate that it belongs to $\Sigma$. 
\end{notation}

\begin{\de}
\label{def:filtered}
 We say that a pair $(\C, \; \Sigma)$ is $\sigma$-filtered, or for brevity, that $\cc{C}$ is \s-filtered (with respect to $\Sigma$), if it is non empty and the following hold:
 
 \bigskip 
  
 \noindent
 \begin{tabular}{rl}
  ${\sigma {\bf F0}}.$ & Given $A, B \in \C$, there exist $E \in \C$ and morphisms $\vcenter{\xymatrix@R=0pc{A \ar[rd]|{o}^f \\
                                                                                                   & E. \\
                                                                                                   B \ar[ru]|{o}_g}}$ \\
  ${\sigma {\bf F1}}.$ & Given $\xymatrix{A \ar@<1ex>[r]^f \ar@<-1ex>[r]|{o}_g & B} \in \C$, there exist a morphism $\xymatrix{B \ar[r]|{o}^h & E}$ and a 2-cell $hf \Mr{\alpha} hg$. \\
  & If $f \in \Sigma$, we may choose $\alpha$ invertible. \\
  ${\sigma {\bf F2}}.$ & Given $A \cellpairrdc{f}{\alpha}{\beta}{g} B \in \C$, there exists a morphism $\xymatrix{B \ar[r]|{o}^h & E}$ such that $h \alpha = h\beta $.
 \end{tabular}
  
  \bigskip 
  
  We say that $\C$ is $\sigma$-cofiltered if $\C^{op}$ is $\sigma$-filtered. We keep the same labels for the axioms. 
\end{\de}

\begin{\prop} \label{prop:notableclimits}
 Consider the following finite diagrams:
  
  \begin{center}
  \begin{tabular}{ll}
   1. $\{a , b\} \mr{F_1} \C$, & $\{C , D\} $ \\
  
  2. $\{a \mrpair{u}{v}b\} \mr{F_2} \C$, & $\{C \mrpairc{f}{g} D\} $ \\
  
  3. $\{a \cellpairrd{u}{\theta}{\eta}{v} b \}  \mr{F_3} \C$, & $\{C \cellpairrdc{f}{\alpha}{\beta}{g} D\}$    
  \end{tabular}
  \end{center}
  
    Let $\Sigma_1$, $\Sigma_2$, $\Sigma_3$ (respectively) be the family of arrows that are mapped to arrows of $\Sigma$, i.e. $\Sigma_i = F_i^{-1}(\Sigma)$.
  Then, for each $i$, the category $Cones_\sigma^{\Sigma_i}(F_i,E)$ (recall Definition \ref{def:claxlimit}) 
  is equivalent (naturally in $E$) to the category $\A_i$ whose objects and arrows are:
  
  \begin{enumerate}
   \item[$\A_1$]  \begin{enumerate}
    \item[]Objects: Pairs of morphisms $\vcenter{\xymatrix@R=0pc{C \ar[rd]^h \\
                                                                                                   & E \\
                                                                                                   D \ar[ru]_\ell}}$
    \item[]Arrows: Pairs of $2$-cells $h \Mr{} h'$, $\ell \Mr{} \ell'$.
   \end{enumerate}

  \item[$\A_2$] \begin{enumerate}
    \item[]Objects: An object consists of a morphism $\xymatrix{D \ar[r]^h & E}$ together with a 2-cell \mbox{$hf \Mr{\gamma} hg$,} invertible if 
  $f \in \Sigma$.
    \item[]Arrows: $2$-cells $h \Mr{\eta} h'$ such that $\gamma'(\eta f) = (\eta g) \gamma$.
   \end{enumerate}
  
  \item[$\A_3$] \begin{enumerate}
    \item[]Objects: Morphisms $\xymatrix{D \ar[r]^h & E}$ such that $h \alpha = h\beta $.
    \item[]Arrows: $2$-cells $h \Mr{\eta} h'$.
   \end{enumerate}
  \end{enumerate} 
\end{\prop}

\begin{proof}
Certainly item 1 requires no proof. 

For item 2, we define the equivalence $Cones_\sigma^{\Sigma_2}(F_2,E) \mr{\phi} \A_2$, and leave the verification of the details to the reader. Given a $\sigma$-cone $\theta$, define $h = \theta_b$, $\gamma = \theta_v^{-1} \theta_u$. Given a morphism of $\sigma$-cones $\theta \mr{\varphi} \theta'$, define $\eta = \varphi_b$. Then it is easy to check that $\phi$ is actually surjective on objects, and given $\phi(\theta) \mr{\eta} \phi(\theta')$, the unique $\theta \mr{\varphi} \theta'$ such that $\phi(\varphi) = \eta$ is defined by $\varphi_b = \eta$, $\varphi_a = \theta_v' (\eta g) \theta_v^{-1}$.

For item 3, define $h, \gamma$ and $\eta$ as in item 2, but now since $\theta$ is a $\sigma$-cone we have 
$h\alpha = \gamma = h\beta$. Then in this case the condition $\gamma'(\eta f) = (\eta g) \gamma$ for a $2$-cell $h \Mr{\eta} h'$ is $(h' \alpha)(\eta f) = (\eta g)(h \alpha)$, which holds by the interchange law.
\end{proof}

The following proposition expresses a basic property of $\sigma$-filteredness and it is a \mbox{generalization} of \cite[\S$\,$VII.6, Lemma 1]{MMcL} to the $2$-dimensional case. See also \cite{Data} where the case of $\Sigma$ consisting of all the arrows of $\C$ is analyzed. 

\begin{notation}
For a $2$-functor $\Delta \mr{F} \C$, we say that a \s-cone 
$\theta$ with vertex $E$ has \emph{arrows in $\Sigma$} if the structural arrows $F(i) \mr{\theta_i} E$ are in $\Sigma$ for all $i \in \Delta$.
\end{notation}

\begin{proposition} \label{prop:filtsiicone}
The following are equivalent
\begin{enumerate}
 \item[i)] $\C$ is $\sigma$-filtered.
 \item[ii)] Each of the diagrams $F_1,F_2,F_3$ in Proposition \ref{prop:notableclimits} has a $\sigma$-cone (with respect to $F^{-1}(\Sigma)$) with arrows in $\Sigma$.
 \item[iii)] Every finite $2$-diagram $\Delta \mr{F} \C$ (i.e. every $2$-functor $\Delta \mr{F} \C$ with $\Delta$ a finite $2$-category) has a $\sigma$-cone (with respect to $F^{-1}(\Sigma)$) with arrows in $\Sigma$. 
\end{enumerate}

\end{proposition}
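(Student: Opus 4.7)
The plan is to prove $(i) \Leftrightarrow (ii)$ directly from Proposition \ref{prop:notableclimits}, then $(iii) \Rightarrow (ii)$ trivially, and finally the substantive direction $(ii) \Rightarrow (iii)$ by a step-by-step construction. For $(i) \Leftrightarrow (ii)$, unpack Proposition \ref{prop:notableclimits}: a $\sigma$-cone with arrows in $\Sigma$ for $F_1$, $F_2$, $F_3$ corresponds, under the equivalences stated there, to an object of $\A_1$, $\A_2$, $\A_3$ in which the involved arrows lie in $\Sigma$. These are, respectively, the conclusions of $\sigma\nn{F0}$ (two morphisms from $C$ and $D$ into a common $E$, both in $\Sigma$), $\sigma\nn{F1}$ (an arrow $h \in \Sigma$ together with a $2$-cell $hf \Mr{} hg$, invertible when $f \in \Sigma$), and $\sigma\nn{F2}$ (an arrow $h \in \Sigma$ with $h\alpha = h\beta$). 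Nonemptyness is recovered from $\sigma\nn{F0}$ with $A=B$. The direction $(iii) \Rightarrow (ii)$ is immediate since $F_1$, $F_2$, $F_3$ are finite diagrams.

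For $(ii) \Rightarrow (iii)$, fix a finite $2$-diagram $\Delta \mr{F} \C$ and build a $\sigma$-cone with arrows in $\Sigma$ in three stages, modifying the tip at each step only by post-composition with an arrow of $\Sigma$. \emph{Stage 1:} Using nonemptyness and $\sigma\nn{F0}$ repeatedly, obtain a family $\{F(i) \mr{\theta_i} E\}_{i \in \Delta_0}$ of morphisms in $\Sigma$ to a common vertex $E$. \emph{Stage 2:} For each $1$-morphism $i \mr{f} j$ of $\Delta$, the composites $\theta_j F(f)$ and $\theta_i$ form a parallel pair whose second arrow lies in $\Sigma$; apply $\sigma\nn{F1}$ with $g := \theta_i$ to produce $h \in \Sigma$ together with a $2$-cell $h\theta_j F(f) \Mr{\theta_f} h\theta_i$, then replace every $\theta_k$ by $h\theta_k$. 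When $F(f) \in \Sigma$ we have $\theta_j F(f) \in \Sigma$, so $\theta_f$ may be chosen invertible. Iterate over the (finitely many) $1$-morphisms of $\Delta$. \emph{Stage 3:} Impose axioms LN0, LN1, LN2 of a $\sigma$-cone; each is an equation between two parallel $2$-cells with codomain $E$, and a single application of $\sigma\nn{F2}$ followed by post-composition with the resulting arrow of $\Sigma$ makes it hold.

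The main obstacle is the bookkeeping in stage 3: successive applications of $\sigma\nn{F2}$ must preserve the invertibility of those $\theta_f$ with $F(f) \in \Sigma$ as well as the axiom equations already enforced. Both are handled uniformly by the observation that horizontal post-composition of a $2$-cell by a $1$-cell preserves both invertibility and equality of parallel $2$-cells; similarly, post-composition with arrows of $\Sigma$ keeps the $\theta_i$ inside $\Sigma$ since $\Sigma$ is closed under composition. Since $\Delta$ is finite there are only finitely many lax-naturality relations to enforce, so the procedure terminates, yielding the desired $\sigma$-cone with arrows in $\Sigma$.
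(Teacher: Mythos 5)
Your proof is correct and follows essentially the same route as the paper: the paper closes the cycle $(iii)\Rightarrow(ii)\Rightarrow(i)\Rightarrow(iii)$, with $(ii)\Rightarrow(i)$ read off from Proposition \ref{prop:notableclimits} and $(i)\Rightarrow(iii)$ proved by exactly your three-stage construction (first $\sigma\mathbf{F0}$ for the legs, then $\sigma\mathbf{F1}$ one arrow at a time for the structural $2$-cells, then $\sigma\mathbf{F2}$ one equation at a time for the lax-naturality axioms). Your explicit remark that post-composition with an arrow of $\Sigma$ preserves invertibility, equality of parallel $2$-cells, and membership in $\Sigma$ is a welcome bit of bookkeeping that the paper leaves implicit.
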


\begin{proof}
$iii) \Rightarrow ii)$ is trivial. $ii) \Rightarrow i)$ follows from the description of the $\sigma$-cones in Proposition \ref{prop:notableclimits}. To show $i) \Rightarrow iii)$, 
suppose that $\C$ is $\sigma$-filtered and let $\Delta \mr{F} \C$ be a finite $2$-diagram. Since $\Delta$ is finite, by  axiom $\sigma {\bf F0}$, we have morphisms $\big\{\xymatrix{Fi \ar[r]|{o}^{\theta_i} & E }\big\}_{i\in \Delta}$. 

\vspace{1ex}

We will modify $E$ and the arrows $\theta_i$ by going further, in order to have a $\sigma$-cone with arrows in $\Sigma$. We will do this one arrow $u$ of $\Delta$ at a time. Using axiom $\sigma {\bf F1}$, there is a morphism $\xymatrix{E \ar[r]|{o}^{h} & E'}$ and a $2$-cell $\theta_j F(u) \Mr{\theta_u} \theta_i$, invertible if $F(u) \in \Sigma$. We denote $E'$ by $E$ again, the compositions
$h \theta_i$ by $\theta_i$, and $h \theta_u$ by $\theta_u$ for all the pre-existing $\theta_u$. We repeat the procedure to have $\left\{ \theta_j F(u) \Mr{\theta_u} \theta_i \right\}_{i\mr{u} j\in \Delta}$, with $\theta_u$ invertible for all $u$ such that $F(u) \in \Sigma$.

Now we consider the equations {\bf LN0}, {\bf LN1}, {\bf LN2} of \S$\,$\ref{sub:terminology}, item \ref{laxoplaxnatural} expressing the lax naturality of $F \Mr{\theta} k_E$ (see Definition \ref{def:claxlimit}). A similar procedure, considering one equation at a time and using axiom $\sigma {\bf F2}$ instead of $\sigma {\bf F1}$, allows one to go further and make $\theta$ a $\sigma$-cone with arrows in $\Sigma$.
\end{proof}

\begin{remark}
The reason why we consider $\sigma$-cones with arrows in $\Sigma$ 
will be clear in Proposition \ref{prop:bilimitsselevantan} below. 
We note nevertheless that a \emph{weaker} notion of $\sigma$-filteredness where we 
ask that every finite 2-diagram has a $\sigma$-cone could also be worth considering in another context.
\end{remark}

\subsection{Exact $2$-functors} \label{sub:exact}

\begin{\de} \label{de:conmutarconlim} 
 Consider $2$-functors  $\A \mr{W} \Cat$, $\A \mr{F} \B \mr{H} \C$. 
  We say that $H$ preserves a $\sigma$-bilimit ${}_{bi} \! \{W,F\}_{\sigma}$ if $\,{}_{bi} \! \{W,HF\}_{\sigma}$ exists, and the canonical map \mbox{$H {}_{bi} \! \{W,F\}_{\sigma} \mr{} {}_{bi} \! \{W,HF\}_{\sigma}$} is an equivalence.
\end{\de}
We will define the notion of finite weight and finite bilimit below. We note that there is a more general notion of finite (or finitary) weight (\cite[\S$\,$4]{K3},\cite{S}) which we don't consider here as it is not necessary for our purposes.
\begin{\de} \label{def:leftexact} $ $

$1.$ We say that a $2$-functor $\A \mr{W} \Cat$ is a \emph{finite weight} if $\A$ is a finite $2$-category and for each $A \in \A$, $WA$ is a finite category. A \emph{finite bilimit} is a weighted bilimit with finite weight. 

$2.$ Assume that $\B$ is a $2$-category with finite weighted bilimits. We say that a $2$-functor $\B \mr{H} \C$ is \emph{left exact} if it preserves all finite bilimits.

\end{\de}
Note that all finite weighted bilimits are required to exist in the domain category of exact $2$-functors, but not necessarily in the codomain category.

The objective of this subsection is to prove the following result: for any left exact $\Cat$-valued $2$-functor $P$, its 2-category of elements $\cc{E}l_P$ is $\sigma$-cofiltered (with respect to the cocartesian arrows). The first result which we will use is a $2$-dimensional version of a result that is known for $\cc{S}et$-valued functors, see for example \cite[Proposition 4.87]{K1}:
\begin{\prop} \label{prop:bilimitsselevantan}
 Let $\A \mr{P} \Cat$ be a $2$-functor. Let $\Delta \mr{F} \cc{E}l_P$ be a $2$-functor, and set $\Sigma = F^{-1}(\cart_P)$. Assume that $\Diamond_P F$ has a $\sigma$-bilimit $L$ in $\A$ that is preserved by $P$. Then there exist $c \in PL$ and a $\sigma$-cone for $F$ with arrows in $\cart_P$ with vertex $(c,L)$, which is the $\sigma$-bilimit of $F$.
\end{\prop}
\begin{proof}
We are going to denote the action of $F$ by $Fi=(a_i,F_i)$ for each $i\in \Delta$, $Fu=(F_u,\sigma_u)$ for each $i \mr{u} j \in \Delta$, $F\theta=F_\theta$ for each $i \cellrd{u}{\theta}{v} j \in \Delta$.

Consider the $\sigma$-bilimit $L$ of the $2$-functor $\Delta \xr{\Diamond_P F} \A$, then $L$ is furnished with a  
$\sigma$-cone $\{L \xr{h_i} F_i\}_{i\in \Delta}$, $\{F_u h_i \Mr{h_u} h_j\}_{i\mr{u}j\in \Delta}$.
This $\sigma$-bilimit is preserved by $P$, this means that if we denote by $E$ the $\sigma$-bilimit of the $2$-functor $\Delta \xr{P \Diamond_P F} \Cat$, which is furnished with a $\sigma$-cone $\{E \mr{\pi_i} PF_i\}_{i\in \Delta}$, $\{PF_u \pi_i \Mr{\pi_u} \pi_j\}_{i\mr{u} j \in \Delta}$, then the comparison functor $PL \mr{s} E$ such that $\pi_i s= Ph_i$, $\pi_u s=Ph_u$ is an equivalence of categories.

Recall that, by the construction of $\sigma$-limits in $\Cat$ given in Remark \ref{rem:limitsenCat}, 
we have that $E=\cc{H}om_{\sigma}(\Delta,\Cat)(k_1,P\Diamond_P F)$ and so:

\begin{enumerate}
 \item Objects of $E$ are $\sigma$-natural transformations between the constant $2$-functor $k_1$ and $P \Diamond_P F$. Observe that those transformations correspond to pairs of tuples $(\{x_i\}_{i\in \Delta}, \{\varphi_u\}_{i\mr{u}j \in \Delta})$ with $x_i \in PF_i$, $PF_u(x_i) \mr{\varphi_u} x_j$ satisfying the following properties corresponding to axioms {\bf LN0}, {\bf LN1} and {\bf LN2} from \S$\,$\ref{sub:terminology}, item \ref{laxoplaxnatural}:
 
\begin{tabular}{ll}
 {\bf LN0}. For all $i\in \Delta$, & $\varphi_{id_i} = id_{x_i}$ \\
 {\bf LN1}. For all $i \mr{u} j \mr{v} k \in \Delta$, & $\varphi_{vu} = \varphi_v  PF_v(\varphi_u)$\\
 {\bf LN2}. For all $i \cellrd{u}{\theta}{v} j \in \Delta$, & $ {\varphi}_{v} (PF_{\theta})_{x_i} = \varphi_u$ 
\end{tabular}

 \item Arrows of $E$ are modifications. Observe that a modification between ($\{x_i\}_{i\in \Delta}, \{\varphi_u\}_{i\mr{u}j \in \Delta})$ and $(\{y_i\}_{i\in \Delta}, \{\psi_u\}_{i\mr{u}j \in \Delta})$ corresponds to a tuple $\{x_i \mr{\xi_i} y_i\}_{i\in \Delta}$ satisfying the property corresponding to axiom {\bf LNM} from \S$\,$\ref{sub:terminology}, item \ref{laxoplaxnatural}:
 
\begin{tabular}{ll}
{\bf LNM}. For all $i \mr{u} j \in \Delta$, & $\psi_{u} PF_u(\xi_i)= \xi_j \varphi_u$
\end{tabular}

 \item $\pi_i(\{x_i\}_{i\in \Delta}, \{\varphi_u\}_{i\mr{u}j \in \Delta})=x_i$, $\pi_i(\{x_i \mr{\xi_i} y_i\}_{i\in \Delta})=\xi_i$.
 
 \item $(\pi_u)_{(\{x_i\}_{i\in \Delta}, \{\varphi_u\}_{i\mr{u}j \in \Delta})}=\varphi_u$.
\end{enumerate}

Thus $s(c)=(\{Ph_i(c)\}_{i\in \Delta}, \{(Ph_u)_c\}_{i\mr{u}j \in \Delta})$.

\vspace{1ex}

Now, $F$ determines an object of $E$ $(\{a_i\}_{i\in \Delta}, \{\sigma_u\}_{i\mr{u}j \in \Delta})$. Since $s$ is an equivalence of categories, there exists an object $c\in PL$ such that we have an invertible modification from $s(c)$ to $(\{a_i\}_{i\in \Delta}, \{\sigma_u\}_{i\mr{u}j \in \Delta})$, say $\{Ph_i(c) \mr{\xi_i} a_i \}_{i\in \Delta}$ satisfying that the following diagram commutes in $PF_j$:

$$
\xymatrix@C=2cm{P(F_uh_i)(c)\ar[d]_{(Ph_u)_c} \ar[r]^{PF_u(\xi_i)} & PF_u(a_i)\ar[d]^{\sigma_u} \\
          Ph_j(c)\ar[r]_{\xi_j} & a_j } 
$$
            
We have the following $\sigma$-cone for $F$:

$$\{(c,L) \xr{(h_i,\xi_i)} (a_i,F_i)\}_{i\in \Delta}, \quad \{(F_u h_i,\sigma_u PF_u(\xi_i)) \Mr{h_u} (h_j,\xi_j)\}_{i\mr{u}j\in \Delta}.$$

It is straightforward from the diagram above that the  $h_u$ are $2$-cells in $\cc{E}l_P$. 

\vspace{1ex}

We leave the verification of the fact that $(c,L)$ is actually a \mbox{$\sigma$-bilimit} to the interested reader. In any case, note that in this paper we only need the existence of a $\sigma$-cone (with arrows in $\cart_P$).
\end{proof}

We write here, for convenience of the reader, the {\em dual} version of Proposition \ref{prop:notableclimits}:

\begin{proposition} \label{prop:dualdenotableclimits}
For the 2-functors $F_i$, $i=1,2,3$ considered in Proposition \ref{prop:notableclimits}, and $\Sigma_i = F_i^{-1}(\Sigma)$, 
  the category $Cones_\sigma^{\Sigma_i}(E,F_i)$ (recall Remark \ref{rem:notgray}) 
  is equivalent (naturally in $E$) to the category $\cc{B}_i$ whose objects and arrows are:
  
  \begin{enumerate}
   \item[$\cc{B}_1$]  \begin{enumerate}
    \item[]Objects: Pairs of morphisms $\vcenter{\xymatrix@R=0pc{& C \\
                                                                                                    E \ar[ru]^h \ar[rd]_\ell \\
                                                                                                   & D }}$
    \item[]Arrows: Pairs of $2$-cells $h \Mr{} h'$, $\ell \Mr{} \ell'$.
   \end{enumerate}

  \item[$\cc{B}_2$] \begin{enumerate}
    \item[]Objects: An object consists of a morphism $\xymatrix{E \ar[r]^h & C}$ together with a 2-cell \mbox{$fh \Mr{\gamma} gh$,} invertible if 
  $f \in \Sigma$.
    \item[]Arrows: $2$-cells $h \Mr{\eta} h'$ such that $\gamma'(f \eta) = (g \eta) \gamma$.
   \end{enumerate}
  
  \item[$\cc{B}_3$] \begin{enumerate}
    \item[]Objects: Morphisms $\xymatrix{E \ar[r]^h & C}$ such that $\alpha h = \beta h$.
    \item[]Arrows: $2$-cells $h \Mr{\eta} h'$. \qed
   \end{enumerate}
  \end{enumerate}

\end{proposition}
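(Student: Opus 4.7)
The plan is to mirror the case-by-case analysis of Proposition~\ref{prop:notableclimits}, with the direction of the structural 2-cells appropriately reversed to reflect that a $\sigma$-cone $k_E \Rightarrow F_i$ (in the sense of Remark~\ref{rem:notgray}) has structural 2-cells of the form $\theta_u \colon F_i(u)\,\theta_a \Rightarrow \theta_b$, as dictated by \S\ref{sub:terminology}, item~\ref{laxoplaxnatural} applied with a constant source $k_E$. Case~1 requires no argument since the source 2-category has no non-identity morphisms.

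For Case~2, I will define the functor $\phi \colon Cones_\sigma^{\Sigma_2}(E,F_2) \to \cc{B}_2$ by $\phi(\theta) = (h,\gamma)$ with $h := \theta_a$ and $\gamma := \theta_v^{-1}\theta_u \colon fh \Rightarrow gh$; this is well-defined because $v \in \Sigma_2$ forces $\theta_v$ invertible, and $\gamma$ inherits invertibility from $\theta_u$, that is, exactly when $f \in \Sigma$. To establish essential surjectivity, given $(h,\gamma)$ I take $\theta_a := h$, $\theta_b := gh$, $\theta_v := \mathrm{id}$, $\theta_u := \gamma$. On morphisms, I send a modification $\varphi$ to $\eta := \varphi_a$; expanding the modification axiom {\bf LNM} at $u$ and $v$ yields $\theta'_u \circ (f\varphi_a) = \varphi_b \circ \theta_u$ and $g\varphi_a = (\theta'_v)^{-1}\varphi_b\theta_v$, and combining these produces precisely the relation $\gamma'(f\eta) = (g\eta)\gamma$ required in $\cc{B}_2$. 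Conversely, any such $\eta$ extends uniquely to a modification by $\varphi_b := \theta'_v(g\eta)\theta_v^{-1}$, which is forced by the $v$-axiom and is compatible with the $u$-axiom by the $\cc{B}_2$ relation.

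For Case~3, the same definitions of $h$ and $\gamma$ apply, but the $\sigma$-cone axiom {\bf LN2} applied to the source 2-cells $\alpha,\beta$ (using that $k_E$ sends each to an identity) gives $\theta_u = \theta_v \circ (\alpha\,\theta_a)$ and $\theta_u = \theta_v \circ (\beta\,\theta_a)$; since $\theta_v$ is invertible, this forces $\alpha h = \beta h$, and $\gamma$ becomes redundant data determined by $h$. The inverse construction takes $h$ with $\alpha h = \beta h$ to the cone with $\theta_v := \mathrm{id}$ and $\theta_u := \alpha h$. The morphism analysis proceeds as in Case~2, but now the $u$-axiom reduces to $(\alpha h')(f\eta) = (g\eta)(\alpha h)$, which holds automatically by the interchange law (paralleling the observation in the proof of Case~3 of Proposition~\ref{prop:notableclimits}), so every 2-cell $h \Rightarrow h'$ lifts without further constraint.

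The only real obstacle is careful bookkeeping of the reversed direction of the structural 2-cells, which now point $F_i(u)\,\theta_a \Rightarrow \theta_b$ rather than $\theta_b\,F_i(u) \Rightarrow \theta_a$; once this substitution is uniformly made, every verification is a formal mirror of the argument for Proposition~\ref{prop:notableclimits}, and naturality in $E$ is immediate since all constructions commute with precomposition by morphisms into $E$.
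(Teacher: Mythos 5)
Your proof is correct and takes essentially the same approach as the paper: the paper presents this result as the formal dual of Proposition \ref{prop:notableclimits} with no separate argument (the \qed is attached directly to the statement), and your case-by-case construction is exactly the dualized version of that proof, with the structural $2$-cells correctly reoriented as $F_i(u)\,\theta_a \Rightarrow \theta_b$. The key computations --- the assignment $\gamma = \theta_v^{-1}\theta_u$, the forced formula $\varphi_b = \theta'_v(g\eta)\theta_v^{-1}$ for the second component of a modification, and the reduction of the case-3 morphism condition to the interchange law --- all check out.
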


\begin{remark} \label{rem:notableweightedbilimits}
Concerning our objective of showing that $\cc{E}l_P$ is $\sigma$-cofiltered for a left exact 2-functor 
$\cc{A} \mr{P} \Cat$, recall Proposition  \ref{prop:filtsiicone}. For a $2$-functor $\Delta \mr{F} \cc{E}l_P$, in view of Proposition \ref{prop:bilimitsselevantan}, we can deduce that $F$ has a $\sigma$-cone with arrows in $\cart_P$ by showing that the $\sigma$-bilimit of the composite 2-functor $\Diamond_P F$ is preserved by $P$. We will show that when $P$ is exact, this is the case when $F$ is each of the functors $F_1,F_2,F_3$ considered in Proposition \ref{prop:notableclimits}, by relating the $\sigma$-bilimits of these functors to biproducts, biinserters, biequalizers and biequifiers in $\cc{A}$ (which we describe below). This is done by performing a careful comparison of the categories $\cc{B}_1, \cc{B}_2, \cc{B}_3$ above with the cones of these four bilimits. Consider thus the finite diagrams $\Delta \mr{F_i} \C$, $i=1,2,3$:

\smallskip 

\noindent 1. $biproduct(C,D)$: this is the bilimit of the diagram $a \stackrel{F_1}{\longmapsto} C,  b \stackrel{F_1}{\longmapsto} D$ weighted by the 2-functor $W_1$ constant at the terminal category $1$.

\noindent 2. $biinserter(f,g)$: this is the bilimit of the diagram $a \mrpair{u}{v} b \stackrel{F_2}{\longmapsto} C \mrpair{f}{g} D$ weighted by $a \mrpair{u}{v}b \stackrel{W_2}{\longmapsto} 1 \mrpair{0}{1} 2$, see \cite[(4.1)]{K2} for details. Note that if we consider the category $I$ (consisting of two objects and an isomorphism) instead of $2$, the weighted bilimit is the $biequalizer(f,g)$ (note that the $biequalizer(f,g)$ is also the $biisoinserter(f,g)$ see \cite[p.308]{K2} and \cite[Observation 5.23]{Canevali})

\noindent 3. $biequifier(\alpha,\beta)$: this is the bilimit of the diagram $a \cellpairrd{u}{\theta}{\eta}{v} b  \stackrel{F_3}{\longmapsto} C \cellpairrd{f}{\alpha}{\beta}{g} D$ weighted by $a \cellpairrd{u}{\theta}{\eta}{v} b  \stackrel{W_3}{\longmapsto} 1 \cellpairrd{0}{}{}{1} 2$, see \mbox{\cite[(4.5)]{K2}} for details. 

We note, though we won't use this fact, that finite biproducts, biequalizers and bicotensor products with $2= \{0 \rightarrow 1\}$ suffice to construct all finite weighted bilimits (The general proof in \cite{SCorr} can be restricted to the finite case, see \cite[\S$\,$6.2]{Canevali} for details). Since the bicotensor product with $2$ can be constructed from the biinserter and the biequifier, the four bilimits above are also sufficient to construct all finite weighted bilimits. In particular we have that a $2$-functor $H$ is left exact if and only if these four bilimits exist and are preserved by $H$, and thus we are actually using in our proof of Proposition \ref{prop:exactothenfiltering} the full strength of the hypothesis. 
\end{remark}

\begin{proposition} \label{prop:descripcionWconosfinitos}
With the definitions above, for $i=1,2,3$, the category $Cones_p^{W_i}(E,F_i)$ (recall Definition \ref{de:NE}) 
  is equivalent (naturally in $E$) to the category $\cc{B}_i$ described in Proposition \ref{prop:dualdenotableclimits} (for $i=2$, the case in which the 2-cell $\gamma$ is required to be invertible corresponds to the case of the biequalizer). 
\end{proposition}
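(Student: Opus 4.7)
The plan is to unfold the definition $Cones_p^{W_i}(E,F_i) = \cc{H}om_p(\Delta_i,\cc{C}at)(W_i,\cc{C}(E,F_i-))$ in each case and exhibit a functor to $\cc{B}_i$ which is an equivalence. Naturality in $E$ will be automatic, since the constructions use only the components $\theta_A,\theta_f$ of a pseudonatural transformation, and these are strictly contravariantly functorial in $E$ under pre-composition.

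Case $i=1$ is essentially tautological: with $W_1 = k_1$, a pseudonatural transformation $k_1 \Mr{\theta} \cc{C}(E,F_1-)$ amounts to a pair $(h\colon E\to C,\; \ell\colon E\to D)$ (no coherence, since the index category has no non-identity arrows), and a modification is a pair of $2$-cells, so one obtains an isomorphism with $\cc{B}_1$ directly.

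For $i=2$, I would unpack $\theta$ as: $\theta_a\colon 1 \to \cc{C}(E,C)$ picks $h\colon E\to C$; $\theta_b\colon 2 \to \cc{C}(E,D)$ picks a $2$-cell $\mu_0\colon k_0 \Mr{} k_1$ for some $k_0,k_1\colon E\to D$; and pseudonaturality supplies \emph{invertible} structural $2$-cells $\alpha_u\colon fh \Mr{\cong} k_0$ and $\alpha_v\colon gh \Mr{\cong} k_1$. Define $\Phi(\theta) = (h,\gamma)$ with $\gamma = \alpha_v^{-1}\circ \mu_0\circ \alpha_u$. For a modification $\theta \Mr{\rho} \theta'$ with $\rho_a(*) = \eta\colon h\Mr{} h'$, the LNM axiom at $u$ and $v$ \emph{forces} $\rho_b(0) = \alpha'_u (f\eta)\alpha_u^{-1}$ and $\rho_b(1) = \alpha'_v (g\eta)\alpha_v^{-1}$, and the naturality square of $\rho_b$ for the unique arrow $0\to 1$ reduces to $\gamma'(f\eta) = (g\eta)\gamma$. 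To see $\Phi$ is an equivalence, exhibit a pseudo-inverse $\Psi$ sending $(h,\gamma)$ to the cone with $\theta_a(*)=h$, $k_0=fh$, $k_1=gh$, $\mu_0=\gamma$, and $\alpha_u=\alpha_v=id$; then $\Phi\Psi = id$ and $\Psi\Phi \cong id$ via the invertible modification with $a$-component the identity and $b$-components given by $\alpha_u,\alpha_v$ at $0,1$. For the biequalizer variant one replaces $2$ by the interval groupoid $I$, which forces $\mu_0$ (hence $\gamma$) to be invertible.

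For $i=3$ the underlying data are the same as in the biinserter case, supplemented by two instances of the LN2 axiom from the $2$-cells $\theta,\eta\colon u \Mr{} v$ of $\Delta$. These reduce to $\mu_0\alpha_u = \alpha_v\cdot(\alpha h)$ and $\mu_0\alpha_u = \alpha_v\cdot(\beta h)$, which together give $\alpha h = \beta h$ and determine $\mu_0$. The same $\Phi,\Psi$ (restricted to such cones) yield the equivalence with $\cc{B}_3$; the biinserter arrow condition $\gamma'(f\eta)=(g\eta)\gamma$ becomes automatic here by the interchange law $\alpha * \eta = (\alpha h')(f\eta) = (g\eta)(\alpha h)$. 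The main technical obstacle I expect is case $i=2$: carefully tracking the invertible structural $2$-cells $\alpha_u,\alpha_v$ in both directions of the equivalence and verifying that the LNM axiom for modifications matches exactly the arrow condition of $\cc{B}_2$.
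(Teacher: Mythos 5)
Your proposal is correct and follows essentially the same route as the paper's proof: unfolding a $w$-pseudocone into $h=\theta_a$, the $2$-cell $\theta_b(\ell)$ and the invertible structural cells $\theta_u,\theta_v$, defining $\gamma=\theta_v^{-1}\,\theta_b(\ell)\,\theta_u$, using the modification axioms to show the induced functor is full and faithful with the arrow condition coming from the naturality square of $\rho_b$, and handling $i=3$ via the extra LN2 instances giving $\alpha h=\gamma=\beta h$. The only cosmetic difference is that you exhibit an explicit pseudo-inverse (the strict cone with identity structural cells) where the paper simply observes surjectivity on objects.
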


\begin{proof}
The case $i=1$ requires no proof. For the case $i = 2$, we denote the category $2$ by \mbox{$\{0 \mr{\ell} 1\}$. }
 Note that a cone $W_2 \Mr{\theta} \C(E,F_2(-))$ amounts to $E \mr{\theta_a} C$, $E \cellrd{\theta_b(0)}{\theta_b(\ell)}{\theta_b(1)} D$, and invertible $2$-cells $\theta_a f \Mr{\theta_u} \theta_b(0)$, $\theta_a g \Mr{\theta_v} \theta_b(1)$. The definition of the equivalence $Cones_p^{W_2}(F_2,E) \mr{\phi} \cc{B}_2$ on objects is by the formulas $h = \theta_a$, $\gamma = \theta_v^{-1} \theta_b(\ell) \theta_u$, this is easily seen to be surjective.
 
 A morphism of cones $W_2 \cellrd{\theta}{\varphi}{\theta'} \C(E,F_2(-))$ is a modification given by natural transformations $\theta_a \Mr{\varphi_a} \theta'_a$, $\theta_b \Mr{\varphi_b} \theta'_b$, therefore by $2$-cells $\varphi_a$, $(\varphi_b)_0$, $(\varphi_b)_1$ such that \mbox{$\theta_b'(\ell) (\varphi_b)_0 = (\varphi_b)_1 \theta_b(\ell)$,}   $\theta_u' (f \varphi_a) \theta_u^{-1} = (\varphi_b)_0$, $\theta_v' (g \varphi_a) \theta_v^{-1} = (\varphi_b)_1$. The definition of $\phi$ is by the formula $\eta = \varphi_a$, then from the equations above we note that $(\varphi_b)_0$ and $(\varphi_b)_1$ are determined by $\varphi_a$, and the condition $\gamma' (f \eta) = (g \eta) \gamma$ is equivalent to the equation $\theta_b'(\ell) (\varphi_b)_0 = (\varphi_b)_1 \theta_b(\ell)$. Then $\varphi$ is full and faithful.
 
 In the case where we replace $2$ by $I$, the formulas are the same, simply note that $\gamma$ is invertible. If $i = 3$, we consider $h, \gamma, \eta$ as in the case $i=2$, then from the $2$-naturality of $\theta$ it follows $\alpha h = \gamma = \beta h$ and the proof finishes like the proof of Proposition \ref{prop:notableclimits}.
\end{proof}

\begin{remark} \label{rem:PexactosiipreservalosFi}
From Proposition \ref{prop:descripcionWconosfinitos} it follows that the $\sigma$-bilimit of each of the functors $F_1,F_2,F_3$ of Proposition \ref{prop:notableclimits}, for any of the possibilities for the family $\Sigma_i$, is a finite weighted bilimit, more precisely:
\begin{enumerate}
 \item For $\{a , b\} \stackrel{F_1}{\longmapsto} \{C,D\}$, $\quad \cbiLim {}{F_1} = biproduct(C,D)$.
 \item  
 \begin{enumerate}
  \item[(i)] For $\{a \mrpaircc{u}{v}b\} \stackrel{F_2}{\longmapsto} \{C \mrpair{f}{g}D\}$, $\quad \cbiLim {}{F_2} = biequalizer(f,g)$.
  \item[(ii)] For $\{a \mrpairc{u}{v}b\} \stackrel{F_2}{\longmapsto} \{C \mrpair{f}{g}D\}$, $\quad \cbiLim {}{F_2} = biinserter(f,g)$. 
 \end{enumerate}
 \item For $\{a \cellpairrdcc{u}{\theta}{\eta}{v}b\} \stackrel{F_3}{\longmapsto} \{C \cellpairrd{f}{\alpha}{\beta}{g}D\} $, and for \\
 \textcolor{white}{For} $\{a \cellpairrdc{u}{\theta}{\eta}{v}b\} \stackrel{F_3}{\longmapsto} \{C \cellpairrd{f}{\alpha}{\beta}{g}D\} $,
 $\quad \cbiLim {}{F_3} = biequifier(\alpha,\beta)$. 
\end{enumerate}

\vspace{1ex}

It follows that if a $2$-functor $P$ is left exact then $P$ preserves the $\cbiLim{}{F_i}$, for $i=1,2,3$ with the $1$-subcategories $\Sigma_i$ considered above.
\end{remark}

\begin{\prop} \label{prop:exactothenfiltering}
Let $\A$ be a $2$-category with finite weighted bilimits and let $\A \mr{P} \Cat$ be a $2$-functor. If $P$ is left exact then $\cc{E}l_P$ is $\sigma$-cofiltered with respect to $\cart_P$.
\end{\prop}
\begin{proof}
By Proposition \ref{prop:filtsiicone} it suffices to show that 
 each of the diagrams 
 $F_1,F_2,F_3: \Delta \mr{} \cc{E}l_P$ 
 considered in Proposition \ref{prop:notableclimits} has a $\sigma$-cone with arrows in $\cart_P$. 
 Let $i=1,2,3$, 
 by Remark \ref{rem:PexactosiipreservalosFi} $\cbiLim{}{\Diamond_P F_i}$ exists in $\A$ and is preserved by $P$ (note that all the possible $\Sigma_i = F_i^{-1} (\cart_P)$ were considered in the remark). 
 Then by Proposition \ref{prop:bilimitsselevantan} $F_i$ has a $\sigma$-cone with arrows in $\cart_P$ which concludes the proof.
 \end{proof}

\subsection{$\sigma$-cofinal $2$-functors} \label{sub:cofinal}

In this section we define \emph{\s-cofinal} $2$-functors and establish some properties that will be used in the proof of Theorem \ref{th:main}. Our definition is a $2$-dimensional $\sigma$-version of the definition in SGA4 for the case when $\C$ is filtered (see \cite[8.1.1]{G2}). If $\Sigma = \C_0$, we recover \mbox{\cite[Definition 1.3.1]{tesisEmi}.} We do not deal with a more general concept of \s-cofinality since this particular case is relevant enough and it is the only one that we need in this paper. We leave the development of the full theory of $\sigma$-cofinal $2$-functors for future work.

\begin{\de} \label{def:initial} 
 Let $\C$, $\C'$ be $2$-categories and $\Sigma$, $\Sigma'$ $1$-subcategories of $\C$, $\C'$ respectively. Suppose that $\C$ is $\sigma$-filtered. We say that a $2$-functor $\C \mr{T} \C'$ is $\sigma$-cofinal (with respect to $\Sigma$ and $\Sigma'$) if it satisfies:

 \bigskip
 
 \begin{tabular}{r p{13cm}}
  $\sigma {\bf C0}.$ & Given $C'\in \C'$, there exist $C \in \C$ and a morphism $\xymatrix{C' \ar[r]|{o} & TC}$ in $\C'$. \\
  $\sigma {\bf C1}.$ & Given $\xymatrix{ C' \ar@<1ex>[r]^f \ar@<-1ex>[r]|{o}_g & TC} \in \C'$, there exist a morphism $\xymatrix{C \ar[r]|{o}^u & D}$ and a 2-cell $T(u)f \Mr{\alpha} T(u)g$. If $f \in \Sigma'$, we may choose $\alpha$ invertible. \\
  $\sigma {\bf C2}.$ & Given $C \in \C$, $C' \in \C'$ and 2-cells $C' \cellpairrdc{f}{\alpha}{\beta}{g} TC \in \C'$, there exists a morphism $\xymatrix{C \ar[r]|{o}^{u} & D} \in \C$ such that $T(u) \alpha = T(u) \beta$.
 \end{tabular}

 \bigskip
 
 If $\C$ is $\sigma$-cofiltered, we say that $\C \mr{T} \C'$ is $\sigma$-initial if $\C^{op} \mr{T^{op}} \C'^{op}$ is $\sigma$-cofinal. We keep the same labels for the axioms.
 \end{\de}
 
The following proposition is the only result concerning $\sigma$-cofinal functors that we need in this paper, and it 
is the analogous to item c) of \cite[8.1.3]{G2}.

\begin{\prop} \label{prop:initialencofiltered}
 Let $\C$, $\C'$ be $2$-categories, $\C \mr{T} \C'$ a $2$-functor, $\Sigma'$ a subcategory of $\C'$, and $\Sigma = T^{-1}(\Sigma')$.
  If the following hold:

\vspace{1ex}
$\hspace{2ex} 1.\;$ $\C'$ is $\sigma$-filtered,

$\hspace{2ex} 2.\;$ $T$ is pseudo-fully-faithful,

$\hspace{2ex} 3.\;$  Condition $\sigma {\bf C0}$ from Definition \ref{def:initial}.

\vspace{1ex}

\noindent Then $\C$ is $\sigma$-filtered and $T$ is $\sigma$-cofinal. 
\end{\prop}

\begin{proof}
 We observe that since $T$ is pseudo-fully-faithful and $\Sigma = T^{-1}(\Sigma')$ we have:

 \bigskip
 
 \noindent
 {\bf (1)} For every arrow $\xymatrix{TC \ar[r]|{o}^{h} & TD} \hbox{ in } \C'$, there exists $\xymatrix{C \ar[r]|{o}^{u} & D}$ such that $T(u)\cong h$.
 
 \bigskip

We are going to check first that axioms $\sigma {\bf C1}$ and $\sigma {\bf C2}$ from Definition \ref{def:initial} are satisfied: 

\vspace{1ex}

\noindent
$\sigma {\bf C1}.$ Given $C \in \C$, $C' \in \C'$, $C' \mrpairc{f}{g} TC  \in \C'$, since $\C'$ is $\sigma$-filtered, there exist a morphism $\xymatrix{TC \ar[r]|{o}^{h} & D'}$ and a 2-cell $hf\Mr{\alpha} hg$, that we may take invertible if $f \in \Sigma'$. Then, by the fact that condition $\sigma {\bf C0}$ is satisfied, there exist an object $D \in \C$ and a morphism \mbox{$\xymatrix{D' \ar[r]|{o}^{l} & TD} \in \C'$.} Now, by {\bf (1)} above, there exists a morphism $\xymatrix{C \ar[r]|{o}^{u} & D}$ such that $T(u)\cong lh$ and so we have a 2-cell $T(u)f \cong lhf \Mr{l \alpha} lhg \cong T(u)g$, which is invertible if $f \in \Sigma'$.

\noindent
$\sigma {\bf C2}.$ Given $C \in \C$, $C' \in \C'$ and 2-cells $C' \cellpairrdc{f}{\alpha}{\beta}{g} TC \in \C'$, since $\C'$ is $\sigma$-filtered, there exists a morphism $\xymatrix{TC \ar[r]|{o}^{h} & D'}$ such that $h \alpha = h\beta$. Then, by the fact that condition $\sigma {\bf C0}$ is satisfied, there exist an object $D\in \C$ and a morphism $\xymatrix{D' \ar[r]|{o}^{l} & TD}$. Now, by {\bf (1)} above, there exists a morphism $\xymatrix{C \ar[r]|{o}^{u} & D}$ such that $T(u)\cong lh$. This, together with the fact that $h \alpha = h \beta$ can be used to prove that 
  $T(u) \alpha = T(u) \beta $.

\vspace{1ex}

It only remains to check that $\sigma {\bf F0}$, $\sigma {\bf F1}$ and $\sigma {\bf F2}$ from Definition \ref{def:filtered} are satisfied for the $2$-category $\C$:

\noindent
$\sigma {\bf F0}.$ Given $C, D\in \C$, since $\C'$ is $\sigma$-filtered, there exist morphisms $\vcenter{\xymatrix@R=0pc{ TC \ar[rd]|{o}^f \\
                                                                                                                                  & E' \\
                                                                                                                                   TD \ar[ru]|{o}_g }}$. 
Then, since $T$ is $\sigma$-cofinal, there exist an object $E\in \C$ and a morphism $\xymatrix{E' \ar[r]|{o}^{h} & TE}$. Now, by {\bf (1)} above, this yields morphisms $\vcenter{\xymatrix@R=0pc{ C \ar[rd]|{o}^u   \\
                                              & E \\
                                                D \ar[ru]|{o}_v}}$.   

\noindent
$\sigma {\bf F1}.$ Given $C \mrpairc{u}{v} D \in \C$, consider $TC \mrpairc{Tu}{Tv} TD \in \C'$. Since $T$ is $\sigma$-cofinal, there exist a morphism  $\xymatrix{D \ar[r]|{o}^{w} & E}$ and a 2-cell $T(wu)=Tw Tu \Mr{\alpha} Tw Tv=T(wv)$,  that we may take invertible if $u \in \Sigma$. Then, since $T$ is pseudo-fully-faithful, this gives a 2-cell $uw\Mr{} vw$, which is invertible if $u \in \Sigma$. 

\noindent
$\sigma {\bf F2}.$ Given $C \cellpairrdc{u}{\theta}{\eta}{v} D \in \C$, consider $TC \cellpairrdc{Tu}{T\theta}{T\eta}{Tv} TD \in \C'$. Since $T$ is $\sigma$-cofinal, there exists a morphism  $\xymatrix{D \ar[r]|{o}^{w} & E}$ such that $T(w \theta)=Tw T\theta =Tw T\eta =T(w\eta)$. Then, since $T$ is pseudo-fully-faithful, $w \theta= w \eta$. 

\end{proof}

\begin{\prop} \label{prop:Tsubeta}
 Let $P,Q: \A \mr{} \Cat$ be $2$-functors, and $P \Mr{\eta} Q$ a pseudonatural transformation. If $\eta_A$ is full and faithful for each $A \in \A$, then
 the induced $2$-functor \mbox{$\cc{E}l_P \mr{T_{\eta}} \cc{E}l_Q$} (recall \ref{sin:Tsubeta}) is $2$-fully-faithful and the $1$-subcategories given by the cocartesian arrows satisfy $\cart_P = T_\eta^{-1}(\cart_Q)$.
\end{\prop}
\begin{proof}
 Recall the formulas in \ref{sin:Tsubeta}. 
Let $(\eta_A(x),A) \mr{(f,\psi)} (\eta_B(y),B)$, consider then $\eta_B (Pf (x)) \mr{(\eta_f^{-1})_x} Qf \eta_A(x) \mr{\psi} \eta_B(y)$, since
 $\eta_B$ is full and faithful there is a unique $Pf (x) \mr{\varphi} y$ such that $T_{\eta}(f,\varphi) = (f,\psi)$.
 This shows that $T_\eta$ is $2$-fully-faithful (the fact that we have an isomorphism between $2$-cells is trivial). 
 
 To show that $\cart_P = T_\eta^{-1}(\cart_Q)$, note that $\eta_B(\varphi) \circ (\eta_f)_x$ is an isomorphism if and only if $\eta_B(\varphi)$ is so, which since $\eta_B$ is full and faithful happens if and only if $\varphi$ is an isomorphism.
\end{proof}

\begin{\prop} \label{prop:TsubHblah}
 Consider $2$-functors $\A \mr{H} \B \mr{P} \Cat$, and the induced $2$-functor $\cc{E}l_{PH} \mr{T_H} \cc{E}l_P$ as in \ref{sin:TsubH}. 
 If $H$ is $2$-fully-faithful, then so is $T_H$ and the $1$-subcategories given by the cocartesian arrows satisfy $\cart_{PH} = T_H^{-1}(\cart_P)$.
\end{\prop}
\begin{proof}
 It is immediate from the formulas $T_{H}(f,\varphi) = (Hf,\varphi)$, $T_H(\theta)=H\theta$ in \ref{sin:TsubH}.
\end{proof}

\section{Flat pseudofunctors and the main theorems} \label{flat}

\subsection{Flat pseudofunctors} \label{sub:flatpseudo} 

  In this subsection we will consider pseudofunctors between $2$-categories. Though our objective when starting the research that led to this paper was to 
  have results for \mbox{$2$-functors,} it turned out that the correct generality in which to define flat $2$-functors is to consider flat pseudofunctors. 
     For $2$-categories $\A,\B$, we denote by $p\cc{H}om_p(\A,\B)$ the $2$-category of pseudofunctors, pseudonatural transformations and modifications. 
     We refer the reader to the Appendix \ref{appendix} for the complete definitions of these concepts, noting that we will not need the explicit formulas of Definition \ref{def:pseudofunctor} in this section.
 
  Weighted bilimits for pseudofunctors are considered for example in \cite{S2}, \cite{Fiore}, \cite[\S$\,$2]{PKan1}. We recall this notion, with an approach more similar to the one of \S$\,$\ref{sub:epslimits}, and show some basic results that will be needed later. For the sake of simplicity in the exposition, we consider only the case $\sigma=p$, but we note that we could also define $\sigma$-bilimits of pseudofunctors.

  \begin{\de} \label{de:NEparapseudo}
Given pseudofunctors $\A \mr{W} \Cat$, $\A \mr{F} \B$, and $E$ an object of $\B$, we denote $pCones_{}^W(F,E) = p\cc{H}om_{p}(\A,\Cat)(W,\B(E,F-))$. This is the category of $w$-pseudocones (with respect to the weight $W$) for $F$ with vertex $E$. 
 
The bilimit of $F$ weighted by $W$, denoted ${}_{bi} \! \{W,F\}_p$ or more precisely $({}_{bi} \! \{W,F\}_p, \xi)$, is a $w$-pseudocone $\xi$ 
with vertex ${}_{bi} \! \{W,F\}_p$ universal in the sense that 

\begin{equation} \label{eq:ssssparapseudo}
\B(B,{}_{bi} \!  \{W,F\}_p ) \mr{\xi^*} p\cc{H}om_{p}(\A,\Cat)(W,\B(B,F-))
 \end{equation}
 
 $$B \cellrd{f}{\alpha}{g} E \quad \longmapsto \quad W \Mr{\xi} \B(E,F-) \cellrd{f^*}{\alpha^*}{g^*} \B(B,F-) $$
 
\noindent is an equivalence of categories (pseudonatural in the variable $B$).
\end{\de}

Bilimits behave pseudofunctorially respect to pseudonatural transformations: 

\begin{remark} \label{rem:limitsfuntorialparapseudo}
 Let $\A \cellrd{V}{\alpha}{W} \Cat, \quad \A \cellrd{F}{\beta}{G} \B$ be pseudonatural transformations between pseudofunctors. With a similar argument as in Remark \ref{rem:limitsfuntorial} it follows that 
 there are pseudofunctors
 $$(p\cc{H}om_p(\A,\Cat)^{op})_+ \xr{{}_{bi} \! \{-,F\}_p} \B,  
 \hspace{4ex} (p\cc{H}om_p(\A,\B))_+ \xr{{}_{bi} \!\{W,-\}_p} \B,$$
 $$(p\cc{H}om_p(\A,\Cat)^{op} \times p\cc{H}om_p(\A,\B))_+ \xr{{}_{bi} \!\{-,-\}_p} \B.$$
\end{remark}

\noindent where the subscript ``$+$'' indicates the full-subcategories with objects such that the corresponding bilimits exist.

As for $2$-functors, we refer to equivalences in $p\cc{H}om_p(\A,\B)$ as pseudo-equivalences. 
Since pseudofunctors send equivalences to equivalences, we have:

1.\; If $\alpha$ is a pseudo-equivalence, then $\;{}_{bi} \! \{W,F\}_p \xr{{}_{bi} \! \{\alpha,F\}_p}  \;{}_{bi} \! \{V,F\}_p$ is an equivalence.

2.\; If $\beta$ is a pseudo-equivalence, then $\;{}_{bi} \! \{W,F\}_p \xr{{}_{bi} \!\{W,\beta\}_p} \;{}_{bi} \! \{W,G\}_p$ is an equivalence. \qed

\vspace{1ex}

    Note that the definitions of preservation of bilimits (Definition \ref{de:conmutarconlim}), and left exactness (Definition \ref{def:leftexact}) make perfect sense for pseudofunctors.  
From Remark \ref{rem:limitsfuntorialparapseudo}, item 2, it follows:

\begin{corollary} \label{coro:exactoestableporequiv}
  Let $\A \cellrd{F}{\beta}{G} \B$ be a pseudo-equivalence between pseudofunctors. 
  Then any weighted bilimit preserved by $F$ is also preserved by $G$. In particular, $F$ is left exact if and only if $G$ is.
\qed
  \end{corollary}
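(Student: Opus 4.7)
The plan is to prove the preservation claim one diagram at a time. Fix a pseudofunctor $K:\cc{D}\to\cc{A}$ and a weight $W:\cc{D}\to\Cat$ such that $L={}_{bi}\!\{W,K\}_p$ exists and is preserved by $F$; I want to show that the canonical comparison $G(L)\to{}_{bi}\!\{W,GK\}_p$ is an equivalence. Whiskering $\beta$ with $K$ produces a pseudonatural transformation $\beta K:FK \rimply GK$ whose components $(\beta K)_D=\beta_{KD}$ are equivalences, so $\beta K$ is itself a pseudo-equivalence. Because ${}_{bi}\!\{W,FK\}_p$ exists (it is equivalent to $F(L)$ by hypothesis), Remark \ref{rem:limitsfuntorialparapseudo}, item 2, applied to $\beta K$ produces an equivalence ${}_{bi}\!\{W,FK\}_p \mr{\approx} {}_{bi}\!\{W,GK\}_p$; in particular, the target bilimit exists.

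Next I would assemble the square
$$
\xymatrix{
F(L)\ar[r]\ar[d]_{\beta_L} & {}_{bi}\!\{W,FK\}_p \ar[d]^{{}_{bi}\!\{W,\beta K\}_p}\\
G(L)\ar[r] & {}_{bi}\!\{W,GK\}_p
}
$$
whose horizontal arrows are the canonical comparison morphisms, and verify that it commutes up to invertible $2$-cell. Both composites, viewed as morphisms $F(L)\to{}_{bi}\!\{W,GK\}_p$, correspond by the universal property of the target bilimit to a $w$-pseudocone for $GK$ with vertex $F(L)$: the right-then-down path produces components $\beta_{KD}\circ F(\xi_D(x))$, while the down-then-right path produces $G(\xi_D(x))\circ\beta_L$, where $\xi$ denotes the universal pseudocone of $L$. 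These agree up to invertible $2$-cell precisely by the pseudonaturality of $\beta$ at each morphism $\xi_D(x):L\to KD$.

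Since $\beta_L$ is an equivalence (because $\beta$ is a pseudo-equivalence), the top arrow is an equivalence by hypothesis, and the right arrow is an equivalence by the first step, two-out-of-three for equivalences forces the bottom arrow to be an equivalence, which is exactly the preservation of the bilimit by $G$. The left-exactness statement then follows by applying the first part (together with its symmetric counterpart via a quasi-inverse of $\beta$) to all finite weighted bilimits. The main technical point is the commutativity of the naturality square, but this reduces cleanly to the pseudonaturality of $\beta$, so no ingredient beyond what is already in the excerpt is required.
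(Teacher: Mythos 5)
Your proof is correct and takes essentially the same route as the paper, which proves this corollary simply by citing Remark \ref{rem:limitsfuntorialparapseudo}, item 2 (the equivalence ${}_{bi}\!\{W,FK\}_p \approx {}_{bi}\!\{W,GK\}_p$ induced by the whiskered pseudo-equivalence $\beta K$) and leaves the comparison square and the two-out-of-three step implicit. Your write-up just makes those implicit steps explicit, including the up-to-isomorphism commutativity of the square via the pseudonaturality of $\beta$ at the components of the universal pseudocone.
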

 
Recall that a $Set$-valued functor is flat when its left Kan extension along the Yoneda embedding is left exact (see for example \cite[\S$\,$VII.5]{MMcL}).
This notion is considered in \cite[\S$\,$6]{K3} for 
$\cc{V}$-enriched categories in general, and in particular for $\cc{V} = \Cat$. However, as it is usually the case (for example with limits), the $\Cat$-enriched version is too strict, and a \emph{relaxed} version is the important notion. 

In Definition \ref{def:2flat} below, we will introduce the notion of \emph{flat} pseudofunctor into $\Cat$. The reader should be aware that if $\A \mr{P} \Cat$ is a $2$-functor 
(as we will consider in \S$\,$\ref{sub:maintheorems}), both Kelly's notion of flatness and ours make sense, but are not at all equivalent. We will always be referring to our notion.

A relaxed notion of Kan extension was already considered in \cite{PKan1}, where it was denoted pseudo Kan extension. We review the main results while, as it is defined by a bicolimit, changing the notation into the one adopted in this paper. We will use (and therefore choose to define) the left bi-Kan extension. 

Let $\C$ be a $2$-category with weighted bicolimits. We will only use the case $\C = \Cat$ in this paper. Given two pseudofunctors 
$\A \mr{P} \C$, $\A \mr{H} \E$, consider the composite \mbox{$\E \mr{h} p\cc{H}om_p(\E^{op},\Cat) \mr{H^*} p\cc{H}om_p(\A^{op},\Cat)$} of the Yoneda embedding \ref{pseudoyoneda}, with the pseudofunctor determined by precomposition with $H$, which we denote $\E(H,-) = H^* \circ h$. We have:

\begin{\de} [{\cite[9.3]{PKan1}}] \label{def:kanforpseudofunctors} 
The left (pointwise) bi-Kan extension of $P$ along $H$ is the pseudofunctor $L = Lan_HP: \E \mr{} \C$ given by the formula $LE = \E(H,E) \;{}_{bi} \! \otimes_p P$ for $E \in \E$, that is $L = (H^* \circ h)  \;{}_{bi} \! \otimes_p P$ (see Remark \ref{rem:limitsfuntorialparapseudo}).
\end{\de}

The pointwise bi-Kan extension has the following important universal property, which can also be considered as the definition of a (not necessarily pointwise) bi-Kan extension:

\begin{\prop} [{\cite[9.6]{PKan1}}] \label{prop:bikanpointwise}
Given pseudofunctors $\A \mr{P} \C$, $\A \mr{H} \E$, for each pseudofunctor $\E \mr{Q} \C$ we have an equivalence 
\begin{equation} \label{eq:kanadjunction}
p\cc{H}om_p(\E,\C)(Lan_HP,Q) \stackrel[\approx]{r}{\longrightarrow} p\cc{H}om_p(\A,\C)(P,QH)
\end{equation}
pseudonatural in $Q$. \qed
\end{\prop}

\begin{remark}\label{rem:unitgeneral} 
 Equation \eqref{eq:kanadjunction} expresses a biadjunction between precomposition with $H$ and $Lan_H$. 
 The unit of this biadjunction consists of a pseudonatural transformation \mbox{$P \Mr{\eta} Lan_HP \circ H$,} which is given by $\eta = r(id_{Lan_HP})$
 in \eqref{eq:kanadjunction}: 
 
 $$\xymatrix{\A \ar[rr]^H \ar[dr]_P && \E \ar[dl]^{Lan_HP }  \\ & \C \ar@{}[u]|{\stackrel{\eta}{\Rightarrow}} } $$
 
 It can be seen (following the $id_{Lan_HP}$ in the chain of equivalences in the proof of \cite[9.6]{PKan1}) that $\eta_A = (\nu_{HA})_A (id_{HA})$, where for each $E \in \E$ we denote by $\nu_E$ the unit of the bicolimit 
 $(Lan_HP)E = \E(H,E) \;{}_{bi} \! \otimes  P$, $\E(H,E) \Mr{\nu_E} \C(P-,(Lan_HP)E)$.
\end{remark}

From Remark \ref{rem:limitsfuntorialparapseudo}, item 2, we have:

\begin{proposition} \label{coro:PyPprima}
 Consider pseudofunctors $\A \xr{P,\,Q} \C$, $\A \mr{H} \E$. If $P$ and $Q$ are pseudo-equivalent, then so are $Lan_H P$  and $Lan_H Q$.
\qed
\end{proposition}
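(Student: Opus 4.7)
The plan is to invoke the pseudofunctoriality of bicolimits in the argument (rather than the weight), which is exactly what Remark \ref{rem:limitsfuntorialparapseudo} item 2 provides in its dual form. By Definition \ref{def:kanforpseudofunctors}, the left bi-Kan extension is built pointwise as $(Lan_H P)(E) = \E(H,E) \;{}_{bi}\!\otimes_p P$, so $E \in \E$ plays the role of a parameter while $P$ is the ``variable'' in which we want to exhibit functoriality. First I would note that the dual of Remark \ref{rem:limitsfuntorialparapseudo} item 2 says: for any fixed weight $W$, a pseudo-equivalence $\beta \colon P \Rightarrow Q$ in $p\cc{H}om_p(\A,\C)$ induces an equivalence $W \;{}_{bi}\!\otimes_p P \xr{\approx} W \;{}_{bi}\!\otimes_p Q$ in $\C$.

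Applying this with $W = \E(H,E)$ for each $E \in \E$ immediately produces componentwise equivalences $(Lan_H P)(E) \xr{\approx} (Lan_H Q)(E)$. The next task is to promote this family into an actual morphism in $p\cc{H}om_p(\E,\C)$. For this I would appeal to the full statement of Remark \ref{rem:limitsfuntorialparapseudo}, namely that bicolimits are jointly pseudofunctorial in the weight and the diagram; letting $E$ vary, the componentwise maps are automatically coherent with the pseudofunctorial structure of both $Lan_H P$ and $Lan_H Q$ (equivalently, one can derive the same coherence from the universal property in Proposition \ref{prop:bikanpointwise}, which exhibits $Lan_H$ as a biadjoint to precomposition with $H$ and hence as a pseudofunctor on the appropriate full subcategories). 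This yields a pseudonatural transformation $Lan_H\beta \colon Lan_H P \Rightarrow Lan_H Q$ whose components at every object are equivalences in $\C$.

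Finally, a pseudonatural transformation all of whose components are equivalences is by definition a pseudo-equivalence (see \S\,\ref{sub:terminology}, item \ref{laxoplaxnatural}), so $Lan_H P \approx Lan_H Q$ in $p\cc{H}om_p(\E,\C)$, as required.

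The only nontrivial step is the middle one, the assembly of the pointwise equivalences into a genuine pseudonatural transformation with coherent structural $2$-cells. This is precisely the content that is already packaged into Remark \ref{rem:limitsfuntorialparapseudo} (joint pseudofunctoriality of bicolimits); without that remark, spelling it out would amount to unwinding the universal property of each bicolimit $(Lan_H P)(E)$ against the morphism $\E(H,E') \to \E(H,E)$ induced by an arrow $E \to E'$, which is routine but tedious. With the remark in hand, the proposition reduces to a one-line invocation plus the observation that pseudofunctors preserve equivalences.
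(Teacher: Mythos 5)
Your proof is correct and follows essentially the same route as the paper, whose entire argument is the one-line invocation of Remark \ref{rem:limitsfuntorialparapseudo}, item 2 (in its dual, bicolimit form) applied to the defining formula $(Lan_H P)(E) = \E(H,E)\;{}_{bi}\!\otimes_p P$. Your additional care in assembling the pointwise equivalences into a pseudonatural transformation is exactly the content the paper packages into the joint pseudofunctoriality statement of that remark, so there is nothing to add.
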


If $H$ is pseudo-fully-faithful (see \S \ref{sub:terminology}, item 7), then the bi-Kan extension is really a (pseudo) extension:

\begin{\prop}[{\cite[9.5]{PKan1}}] \label{prop:410PKan2}
With the notation of Definition \ref{def:kanforpseudofunctors}, if $H$ is pseudo-fully-faithful, then the unit $\eta$ of Remark \ref{rem:unitgeneral} is a pseudo-equivalence (recall that this amounts to each $\eta_A$ being an equivalence of categories). \qed
\end{\prop}

\begin{\de} \label{def:2flat}
Let $\A \mr{P} \Cat$ be a pseudofunctor, consider the Yoneda embedding \ref{pseudoyoneda} \mbox{$\A \mr{h} \cc{H}om_p(\A^{op},\Cat)$,} we denote $P^* = Lan_h P$. 
We say that $P$ is \emph{flat} if $P^*$ is left exact (note that this is well defined by Corollary \ref{coro:exactoestableporequiv}).
Note that, by Proposition \ref{prop:410PKan2}, the following diagram commutes up to pseudo-equivalence:

\begin{equation} \label{eq:PequivPh}
\xymatrix{\A \ar[rr]^<<<<<<<<<<h \ar[dr]_P && \cc{H}om_p(\A^{op},\Cat) \ar[dl]^{P^*}  \\ & \Cat \ar@{}[u]|{\stackrel[\approx]{\eta}{\Rightarrow}} } 
\end{equation}
\end{\de}

\begin{\prop} \label{prop:primermitadkelly62}
 A flat pseudofunctor $\A \mr{P} \Cat$ preserves any finite (weighted) bilimit that exists in $\A$. 
 \end{\prop}

\begin{proof}
It follows immediately from diagram \eqref{eq:PequivPh} and Corollary \ref{coro:exactoestableporequiv} (note that $h$ preserves weighted bilimits by Corollary \ref{comeout}).
\end{proof}

Consider $P,\;h,\;P^*$ as in Definition \ref{def:2flat}. It follows from Remark \ref{rem:limitsfuntorialparapseudo}, item 1, that for a $2$-functor $\A^{op} \mr{F} \Cat$ 
the formula \mbox{$P^*F = \cc{H}om_p(\A^{op},\Cat)(h,F) {}_{bi} \! \otimes_p P$} in Definition \ref{def:kanforpseudofunctors} is pseudo-equivalent by Yoneda to the usual coend formula $P^*F = F  {}_{bi} \! \otimes_p P$ (recall Corollary \ref{coro:coendcomoweighted}).

For a $2$-functor $\A^{op} \mr{F} \Cat$, from the dual case of Proposition \ref{Pcomobicoend} we have $F \approx F \otimes_p h$. Since this pseudo-colimit is computed pointwise by Proposition \ref{prop:claxcolimptoapto}, for $A \in \A$ we have \mbox{$FA \approx F \otimes_p \A(A,-)$.} It follows: 

\begin{\prop} \label{prop:representableareflat}
The bi-Kan extension of a representable $2$-functor $\A(A,-)$ along $h$ can be chosen to be the evaluation $2$-functor $\cc{H}om_p(\A^{op},\Cat) \xr{ev_A} \Cat$. 
Since by Proposition \ref{prop:claxcolimptoapto} the evaluations preserve any weighted pseudolimit, we have in particular that the representable $2$-functors are flat. \qed
\end{\prop}

\subsection{The main theorem} \label{sub:maintheorems}

Let $\C$ be a $2$-category with weighted pseudo-colimits. We will only need the case $\C = \Cat$ in this paper.

\begin{remark} \label{sin:kanpara2functors}
Consider a $2$-functor $\A \mr{P} \C$, and a $2$-functor $\A \mr{H} \E$. Note that we can compute the bi-Kan extension $L = Lan_HP$ of Definition \ref{def:kanforpseudofunctors} as a $2$-functor $\E \mr{L} \C$. The definition of $L$ is given by the formula $LE = \E(H,E) \;{}_{bi} \! \otimes_p P$, but we can compute it by the equivalent pseudo-colimit $LE = \E(H,E) \otimes_p P$. \cqd
\end{remark}
 
\begin{remark} \label{rem:kanconmutaconcolim}
 Consider $2$-functors $\cc{I} \mr{F} \cc{H}om_p(\A,\C)$,  and $\A \mr{H} \E$. From the dual of Proposition \ref{sin:interchange} we have the equation
$\E(H,E) \otimes_p \coLim{i \in \cc{I}}{F_i} = \coLim {i \in \cc{I}}{\E(H,E) \otimes_p F_i}
$, which together with the fact that $\sigma$-colimits are computed pointwise, implies immediately  
the equation $Lan_H(\coLim{i \in \cc{I}}{F_i})E = (\coLim{i \in \cc{I}}{Lan_H F_i})E$. That is, the left bi-Kan extension commutes with $\sigma$-colimits. \cqd
\end{remark}

With this, and using again that  $\sigma$-colimits in $\cc{H}om_p(\A,\Cat)$ are computed pointwise,
we have the following immediate corollary of Theorem \ref{teo:conmutan}.

\begin{proposition} \label{coro:colimitdeexactosesexacto}
 A $\sigma$-filtered $\sigma$-colimit in $\cc{H}om_p(\A,\Cat)$ of left exact $2$-functors is left exact. \qed  
\end{proposition}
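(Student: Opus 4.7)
The plan is to verify left exactness of $G := \coLim{i \in \cc{I}}{F_i}$ directly from the definition, by combining the pointwise computation of $\sigma$-colimits in $\cc{H}om_p(\A, \Cat)$ (Corollary \ref{coro:claxcolimptoapto}) with the interchange result of Theorem \ref{teo:conmutan}. Concretely, I will chase a finite weighted bilimit through $G$ by evaluating pointwise, exchanging the resulting $\sigma$-filtered $\sigma$-colimit with the finite bilimit using the interchange theorem, and then re-assembling via the pointwise formula.

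First, I fix a finite weight $\Delta \mr{W} \Cat$ and a $2$-functor $\Delta \mr{D} \A$, and write $B = {}_{bi}\!\{W, D\}_p$ for the corresponding finite bilimit computed in $\A$. Since $\sigma$-colimits in $\cc{H}om_p(\A, \Cat)$ are computed pointwise, evaluation of $G$ at $B$ gives
$$G(B) \;=\; \coLim{i \in \cc{I}}{F_i(B)}.$$
The left exactness of each $F_i$ provides equivalences $F_i(B) \approx {}_{bi}\!\{W, F_i D\}_p$, whence
$$G(B) \;\approx\; \coLim{i \in \cc{I}}{{}_{bi}\!\{W, F_i D\}_p}.$$
Next I apply Theorem \ref{teo:conmutan} to the $2$-functor $\cc{I} \mr{} \cc{H}om_p(\Delta, \Cat)$ sending $i$ to $F_i D$ and to the finite weight $W$, obtaining
$$\coLim{i \in \cc{I}}{{}_{bi}\!\{W, F_i D\}_p} \;\approx\; {}_{bi}\!\{W, \coLim{i \in \cc{I}}{F_i D}\}_p.$$
A final use of pointwise computation, now in $\cc{H}om_p(\Delta, \Cat)$, identifies $\coLim{i \in \cc{I}}{F_i D}$ with $G D$, producing the desired equivalence $G({}_{bi}\!\{W, D\}_p) \approx {}_{bi}\!\{W, G D\}_p$.

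The only delicate point is that Definition \ref{de:conmutarconlim} demands the invertibility of a specific canonical comparison map, rather than merely the existence of some equivalence. However, each of the equivalences produced above is extracted from the associated universal property (pointwise colimits, left exactness of each $F_i$, and the interchange theorem), and naturality of these universal properties guarantees that their composite is exactly the canonical comparison. The genuine obstacle has therefore been absorbed into the proof of Theorem \ref{teo:conmutan}, which is why the statement appears as an immediate corollary of that theorem.
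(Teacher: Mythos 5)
Your proposal is correct and follows essentially the same route as the paper, which derives this proposition directly from the pointwise computation of $\sigma$-colimits in $\cc{H}om_p(\A,\Cat)$ together with Theorem \ref{teo:conmutan}; you have simply written out the chain of equivalences that the paper leaves implicit in calling it an ``immediate corollary.'' Your closing remark that the comparison maps assemble into the canonical one, with the real work absorbed into Theorem \ref{teo:conmutan}, matches the paper's intent exactly.
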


From Remark \ref{rem:kanconmutaconcolim} and Proposition~\ref{coro:colimitdeexactosesexacto} it follows (all $\sigma$-colimits below are considered in $\cc{H}om_p(\A,\Cat)$): 

\begin{corollary} \label{coro:colimitdeplayosesplayo}
 A $\sigma$-filtered $\sigma$-colimit of flat $2$-functors is flat. In particular, by Proposition \ref{prop:representableareflat}, a $\sigma$-filtered $\sigma$-colimit of representable $2$-functors is flat.
 \qed
\end{corollary}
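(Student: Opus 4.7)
The plan is to chain together the two results immediately preceding the corollary. Suppose we have a \s-filtered 2-category $\cc{I}$ and a $2$-functor $\cc{I} \xr{P_{(-)}} \cc{H}om_p(\A,\Cat)$ with each $P_i$ flat, and set $P = \coLim{i \in \cc{I}}{P_i}$ (taken in $\cc{H}om_p(\A,\Cat)$). To show $P$ is flat, by Definition \ref{def:2flat} we must verify that $P^* = Lan_h P$ is left exact, where $\A \mr{h} \cc{H}om_p(\A^{op},\Cat)$ is the Yoneda embedding.

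First, I would invoke Remark \ref{rem:kanconmutaconcolim} with $H = h$ to obtain the pseudo-equivalence
\[
P^* \;=\; Lan_h\bigl(\coLim{i \in \cc{I}}{P_i}\bigr) \;\approx\; \coLim{i \in \cc{I}}{Lan_h P_i} \;=\; \coLim{i \in \cc{I}}{P_i^*},
\]
where the $\sigma$-colimit on the right is taken in $\cc{H}om_p(\cc{H}om_p(\A^{op},\Cat),\Cat)$. Since each $P_i$ is flat, each $P_i^*$ is left exact by definition. Then, applying Proposition \ref{coro:colimitdeexactosesexacto} to the diagram $\cc{I} \mr{} \cc{H}om_p(\cc{H}om_p(\A^{op},\Cat),\Cat)$, $i \mapsto P_i^*$, we conclude that $\coLim{i \in \cc{I}}{P_i^*}$ is left exact. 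Because left exactness is preserved by pseudo-equivalences (Corollary \ref{coro:exactoestableporequiv}), $P^*$ is left exact, and hence $P$ is flat.

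For the ``in particular'' clause, since representable $2$-functors are flat by Proposition \ref{prop:representableareflat}, applying the first assertion to a diagram of representables yields the second. There is essentially no obstacle here: the real work was done in Remark \ref{rem:kanconmutaconcolim} (interchange of bi-Kan extensions with $\sigma$-colimits, resting on the pointwise computation of $\sigma$-colimits and Proposition \ref{sin:interchange}) and in Proposition \ref{coro:colimitdeexactosesexacto} (which rests on Theorem \ref{teo:conmutan} on commutativity of \s-filtered $\sigma$-colimits with finite bilimits in $\Cat$). The only subtle point to be careful about is that the two $\sigma$-colimits appearing in the argument live in different $2$-functor categories; however both are computed pointwise in $\Cat$, so the pseudo-equivalence provided by Remark \ref{rem:kanconmutaconcolim} is the genuine bridge between them.
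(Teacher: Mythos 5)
Your argument is correct and is precisely the proof the paper intends: the corollary is stated as an immediate consequence of Remark \ref{rem:kanconmutaconcolim} (commutation of $Lan_h$ with $\sigma$-colimits) and Proposition \ref{coro:colimitdeexactosesexacto} (a $\sigma$-filtered $\sigma$-colimit of left exact $2$-functors is left exact), which is exactly the chain you spell out. Your added care about where the two $\sigma$-colimits live and about transporting left exactness along the pseudo-equivalence via Corollary \ref{coro:exactoestableporequiv} is consistent with the paper's setup and fills in the details the authors leave implicit.
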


\begin{lemma} \label{lema:gammapengammapestrella}
 Let $\A \mr{P} \Cat$, $\A \mr{H} \E$, $\E \mr{L} \Cat$ as in \ref{sin:kanpara2functors}. 
 Consider the $1$-subcategories $\cart_P$ of $\cc{E}l_P$, and $\cart_{L}$ of $\cc{E}l_{L}$ as in Definition \ref{def:gammaP}. Then there exists a canonical $2$-functor 
 $$T: \cc{E}l_P \mr{} \cc{E}l_{L}$$ 
  satisfying (the dual of) axiom $\sigma {\bf C0}$ in Definition \ref{def:initial}. If $H$ is $2$-fully-faithful, then so is $T$ and $\cart_P = T^{-1} (\cart_L)$.
\end{lemma}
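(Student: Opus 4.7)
The plan is to define $T$ as the composite
$$\cc{E}l_P \xr{T_\eta} \cc{E}l_{LH} \xr{T_H} \cc{E}l_L,$$
where $\eta: P \Mr{} LH$ is the unit of the bi-Kan extension (Remark \ref{rem:unitgeneral}), and $T_\eta$ and $T_H$ are the induced $2$-functors from \ref{sin:Tsubeta} and \ref{sin:TsubH} respectively. On objects this yields $T(x,A) = (\eta_A(x), HA)$; on morphisms, $T(f,\varphi) = (Hf,\, \eta_B(\varphi)\circ(\eta_f)_x)$.

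To verify the dual of axiom $\sigma\mathbf{C0}$: given $(y,E) \in \cc{E}l_L$, since $LE = \E(H-,E) \otimes_p P$ by Remark \ref{sin:kanpara2functors}, Lemma \ref{lema:elemenplimconpeso} produces $A \in \A$, $f \in \E(HA,E)$ and $x \in PA$ with $\nu^E_A(f)(x) = y$, where $\nu^E$ denotes the universal pseudocone for $LE$. The pseudofunctoriality of $L$ at $f: HA \to E$ is induced (via Remark \ref{rem:limitsfuntorialparapseudo}) from the morphism of weights $\E(H-,HA) \Mr{} \E(H-,E)$ given by postcomposition with $f$, and unwinding the resulting universal property yields a canonical natural isomorphism $L(f) \circ \nu^{HA}_A(id_{HA}) \cong \nu^E_A(f)$. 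Since $\eta_A = \nu^{HA}_A(id_{HA})$ by Remark \ref{rem:unitgeneral}, evaluating at $x$ gives an isomorphism $\varphi: L(f)(\eta_A(x)) \xr{\sim} y$, so $(f,\varphi): T(x,A) \to (y,E)$ is the required arrow in $\cart_L$.

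For the second assertion, assume $H$ is $2$-fully-faithful, hence in particular pseudo-fully-faithful. By Proposition \ref{prop:410PKan2} the unit $\eta$ is then a pseudo-equivalence, so each $\eta_A$ is an equivalence of categories and in particular fully faithful. Proposition \ref{prop:Tsubeta} therefore applies to give that $T_\eta$ is $2$-fully-faithful with $\cart_P = T_\eta^{-1}(\cart_{LH})$, while Proposition \ref{prop:TsubHblah} gives the analogous statements for $T_H$. Composing, $T = T_H \circ T_\eta$ is $2$-fully-faithful and satisfies $\cart_P = T^{-1}(\cart_L)$.

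The main obstacle is pinning down the isomorphism $L(f) \circ \eta_A \cong \nu^E_A(f)$. Although this should be essentially tautological from the way $L$ is built as a pseudofunctor through the universal property of pseudocolimits (Remark \ref{rem:limitsfuntorialparapseudo}), making it precise requires carefully unpacking how the action of $L$ on a morphism of $\E$ is induced from the corresponding morphism of weights. Once this is in hand, the remaining verifications reduce to direct applications of Propositions \ref{prop:Tsubeta} and \ref{prop:TsubHblah} together with Lemma \ref{lema:elemenplimconpeso}.
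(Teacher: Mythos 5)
Your proposal is correct and follows essentially the same route as the paper: $T = T_H \circ T_\eta$, the dual of $\sigma\mathbf{C0}$ via Lemma \ref{lema:elemenplimconpeso}, and the second assertion via Propositions \ref{prop:410PKan2}, \ref{prop:Tsubeta} and \ref{prop:TsubHblah}. The only difference is that the step you flag as the main obstacle is in fact immediate here: since $L$ is computed as a genuine $2$-functor via the strict pseudocolimit $LE = \E(H-,E)\otimes_p P$ (Remark \ref{sin:kanpara2functors}), the uniqueness in Remark \ref{rem:limitsfuntorial} makes the square relating $(\nu_{HA})_A$, $(\nu_E)_A$, $\theta_*$ and $(L\theta)_*$ commute on the nose, so one gets the equality $c = L\theta(\eta_A(x))$ and may take $\varphi = id$ rather than merely an isomorphism (either choice lands in $\cart_L$).
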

\begin{proof}
 $T$ is defined as the composition of the $2$-functors $\cc{E}l_P \mr{T_\eta} \cc{E}l_{LH} \mr{T_H} \cc{E}l_{L}$ considered in propositions \ref{prop:Tsubeta} and \ref{prop:TsubHblah}, where $\eta$ is the pseudonatural transformation of Remark \ref{rem:unitgeneral}. Then we have the formula $T(x,A) = (\eta_A(x),HA)$. Let $(c,E) \in \cc{E}l_{L}$, we will show that there is an arrow in $\cart_L$ of the form $(\eta_A(x),HA) \xr{(\theta,id)} (c,E)$.
 
 We have $c \in LE = \E(H-,E) \otimes_p P$, then by Lemma \ref{lema:elemenplimconpeso} there exist $A \in \A$, $HA \mr{\theta} E$ and $x \in PA$ such that $(\nu_E)_A(\theta)(x) = c$.
 
 We consider the following diagram, which commutes by definition of $L$ on the arrow $\theta$ (see Remark  \ref{rem:limitsfuntorial})
 $$
 \xymatrix{ \E(HA,E) \ar[rr]^{(\nu_E)_A} && \Cat(PA,LE) \\
 \E(HA,HA) \ar[u]^{\theta_*} \ar[rr]^{(\nu_{HA})_A} && \Cat(PA,LHA) \ar[u]_{(L\theta)_*}  }
 $$
 Element chasing $id_{HA}$ and then evaluating at $x$, we have the equality
$$
c = (\nu_E)_A(\theta)(x) = L\theta \circ (\nu_{HA})_A (id_{HA}) (x) \stackrel{\ref{rem:unitgeneral}}{=} L\theta (\eta_A(x)),
$$
\noindent which expresses the fact that $(\theta,id)$ is an arrow of $\cc{E}l_{L}$ as desired.

If $H$ is $2$-fully-faithful, by Proposition \ref{prop:410PKan2} each $\eta_A$ is full and faithful and then by Propositions \ref{prop:Tsubeta} and \ref{prop:TsubHblah} both $T_\eta$ and $T_H$ are $2$-fully-faithful and \mbox{$\cart_P = T_{\eta}^{-1} (\cart_{LH}) =  T^{-1} (\cart_L)$}.
\end{proof}

Using Proposition \ref{prop:initialencofiltered} for the $2$-functor $T^{op}: \cc{E}l_P^{op} \mr{} \cc{E}l_{L}^{op}$ it follows

\begin{corollary} \label{coro:gammapengammapestrella}
 Under all the hypothesis of Lemma \ref{lema:gammapengammapestrella} (including $H$ 2-fully-faithful), if $\cc{E}l_{L}$ is $\sigma$-cofiltered (with respect to $\cart_L$), then $\cc{E}l_P$ is $\sigma$-cofiltered (with respect to $\cart_P$). 
 \qed
\end{corollary}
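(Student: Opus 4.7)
The plan is to invoke Proposition \ref{prop:initialencofiltered} applied to the $2$-functor $T^{op}: \cc{E}l_P^{op} \mr{} \cc{E}l_L^{op}$, with distinguished $1$-subcategory $\cart_L^{op}$ on the codomain. Since by Definition \ref{def:filtered} the statement ``$\cc{E}l_P$ is \s-cofiltered with respect to $\cart_P$'' is by definition ``$\cc{E}l_P^{op}$ is \s-filtered with respect to $\cart_P^{op}$'', such an application (once its hypotheses are verified) immediately yields the desired conclusion, provided we also confirm the compatibility $\cart_P^{op} = (T^{op})^{-1}(\cart_L^{op})$, which is equivalent to $\cart_P = T^{-1}(\cart_L)$.

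All three hypotheses of Proposition \ref{prop:initialencofiltered} can be checked directly against Lemma \ref{lema:gammapengammapestrella}. Hypothesis (1), that $\cc{E}l_L^{op}$ is \s-filtered with respect to $\cart_L^{op}$, is exactly the standing assumption that $\cc{E}l_L$ is \s-cofiltered. Hypothesis (2), that $T^{op}$ is pseudo-fully-faithful, follows from the $2$-fully-faithfulness of $T$ (established in Lemma \ref{lema:gammapengammapestrella} under the hypothesis that $H$ is $2$-fully-faithful), together with the fact that $(-)^{op}$ preserves $2$-fully-faithfulness and that $2$-fully-faithful implies pseudo-fully-faithful. Hypothesis (3), axiom $\sigma {\bf C0}$ for $T^{op}$, dualizes to the requirement that for every $(c,E) \in \cc{E}l_L$ there exist $(x,A) \in \cc{E}l_P$ and a morphism in $\cart_L$ of the form $T(x,A) \mr{} (c,E)$; this is precisely what is constructed in Lemma \ref{lema:gammapengammapestrella} via Lemma \ref{lema:elemenplimconpeso}. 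The required identity $\cart_P = T^{-1}(\cart_L)$ is also supplied by the same lemma.

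There is no real obstacle: the entire content of the corollary is already packaged into Lemma \ref{lema:gammapengammapestrella} and Proposition \ref{prop:initialencofiltered}, and the only care needed is in tracking how \s-cofilteredness becomes \s-filteredness in the opposite $2$-category and how the cocartesian classes transport correctly under the dual.
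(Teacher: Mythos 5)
Your proposal is correct and coincides with the paper's own argument, which likewise obtains the corollary by applying Proposition \ref{prop:initialencofiltered} to $T^{op}\colon \cc{E}l_P^{op} \mr{} \cc{E}l_L^{op}$, with the three hypotheses (pseudo-fully-faithfulness, axiom $\sigma\mathbf{C0}$, and $\cart_P = T^{-1}(\cart_L)$) supplied by Lemma \ref{lema:gammapengammapestrella}. The only difference is that you spell out the dualization bookkeeping that the paper leaves implicit.
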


It is a classical result (see for example \cite[\S$\,$VII.6]{MMcL}) that every flat $\cc{S}et$-valued functor is a filtered colimit of representable functors, that, as far as we know (see \cite[(6.4)]{K3}) has no known generalization to other base categories. Here we extend this result to $2$-dimensional category theory. Note that from Theorem \ref{th:main} it follows that if a $2$-functor $\A \mr{P} \Cat$ is pseudo-equivalent to any $\sigma$-filtered $\sigma$-colimit of representable $2$-functors, then the $\sigma$-colimit in its canonical expression \ref{Pcomosigmabicolimit} (whose diagram is actually in $Hom_s(\A,\Cat)$, see remark \ref{rem:diagenHoms}) is also $\sigma$-filtered.

\begin{theorem}\label{th:main}
 Let $\A \mr{P} \Cat$ be a $2$-functor. Then the following are equivalent.
 
 \begin{itemize}
  \item[(i)] $\cc{E}l_P$ is $\sigma$-cofiltered with respect to the family $\cart_P$ of cocartesian arrows.
  \item[(ii)] $P$ is equivalent to a $\sigma$-filtered $\sigma$-colimit of representable \mbox{$2$-functors in $\cc{H}om_p(\A,\Cat)$.}
  \item[(iii)] $P$ is flat.
 \end{itemize}
\end{theorem}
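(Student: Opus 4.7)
The plan is to establish the three implications (i)$\Rightarrow$(ii)$\Rightarrow$(iii)$\Rightarrow$(i), invoking in each case results already developed in the paper.

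First I would prove (i)$\Rightarrow$(ii) essentially for free, by invoking Proposition \ref{Pcomosigmabicolimit}: every $2$-functor $P$ admits the canonical pseudo-equivalence
$P \approx \coLimconcartp{(x,A) \in \cc{E}l_P^{op}} \A(A,-)$
in $\cc{H}om_p(\A,\Cat)$, with indexing pair $(\cc{E}l_P^{op}, \cart_P)$. Under hypothesis (i), the pair $(\cc{E}l_P^{op},\cart_P)$ is $\sigma$-filtered (this is exactly what cofilteredness of $\cc{E}l_P$ means), so this canonical expression exhibits $P$ as a $\sigma$-filtered $\sigma$-colimit of representables.

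Next, (ii)$\Rightarrow$(iii) is an immediate application of Corollary \ref{coro:colimitdeplayosesplayo}, which says that any $\sigma$-filtered $\sigma$-colimit of representable $2$-functors is flat, combined with the fact (established just after Definition \ref{def:2flat}) that flatness is preserved under pseudo-equivalence via Proposition \ref{coro:PyPprima} and Corollary \ref{coro:exactoestableporequiv}.

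The heart of the proof is the implication (iii)$\Rightarrow$(i), which I would obtain by a two-step argument. Assume $P$ is flat, so that $P^* = \mathrm{Lan}_h P$ is left exact, where $h : \A \to \cc{H}om_p(\A^{op},\Cat)$ is the Yoneda embedding. The target $2$-category $\cc{H}om_p(\A^{op},\Cat)$ has all finite weighted bilimits since $\Cat$ does and bilimits are computed pointwise there (Corollary \ref{coro:claxcolimptoapto}). Therefore Proposition \ref{prop:exactothenfiltering} applies to $P^*$ and gives that $\cc{E}l_{P^*}$ is $\sigma$-cofiltered with respect to $\cart_{P^*}$. Now I would transport this cofilteredness back along the Yoneda embedding using Corollary \ref{coro:gammapengammapestrella} (whose proof relies on Lemma \ref{lema:gammapengammapestrella} and Proposition \ref{prop:initialencofiltered}): since $h$ is pseudo-fully-faithful by Pseudo-Yoneda \ref{pseudoyoneda}(b), the induced comparison $T : \cc{E}l_P \to \cc{E}l_{P^*}$ satisfies the hypotheses needed to conclude that $\cc{E}l_P$ is itself $\sigma$-cofiltered with respect to $\cart_P$.

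The main obstacle I anticipate lies in the last implication, specifically in verifying cleanly that Corollary \ref{coro:gammapengammapestrella} applies to the Yoneda embedding, since that corollary is stated under a $2$-fully-faithful hypothesis while Yoneda is only pseudo-fully-faithful. I expect this to be essentially a matter of checking that the arguments of Propositions \ref{prop:Tsubeta} and \ref{prop:TsubHblah}, and hence Lemma \ref{lema:gammapengammapestrella}, adapt to the pseudo-fully-faithful setting, giving a pseudo-fully-faithful $T : \cc{E}l_P \to \cc{E}l_{P^*}$ with $\cart_P = T^{-1}(\cart_{P^*})$ (up to isomorphism on $1$-cells), which is still enough for Proposition \ref{prop:initialencofiltered} to run. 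Everything else amounts to stringing together previously established results.
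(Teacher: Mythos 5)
Your proof is correct and follows the same route as the paper's: (i)$\Rightarrow$(ii) via the canonical expression of Proposition \ref{Pcomosigmabicolimit}, (ii)$\Rightarrow$(iii) via Corollary \ref{coro:colimitdeplayosesplayo} together with invariance of flatness under pseudo-equivalence, and (iii)$\Rightarrow$(i) via Propositions \ref{prop:claxcolimptoapto} and \ref{prop:exactothenfiltering} followed by Corollary \ref{coro:gammapengammapestrella}. Your observation that the Yoneda embedding is only pseudo-fully-faithful while Corollary \ref{coro:gammapengammapestrella} is stated for $2$-fully-faithful $H$ is a fair point of care which the paper itself glosses over, and your resolution is the right one: Proposition \ref{prop:initialencofiltered} only requires $T$ to be pseudo-fully-faithful and $\cart_P = T^{-1}(\cart_{P^*})$ still holds, so the argument goes through.
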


\begin{proof}
 $(i) \Rightarrow (ii)$ follows immediately by the canonical expression \ref{Pcomosigmabicolimit}. $(ii) \Rightarrow (iii)$ holds by Corollary \ref{coro:colimitdeplayosesplayo} (note that flatness is preserved by pseudo-equivalence by Corollary \ref{coro:PyPprima}). 
 $(iii) \Rightarrow (i)$: If $P^*$ is left exact, by Propositions \ref{prop:claxcolimptoapto} and \ref{prop:exactothenfiltering}, $\cc{E}l_{P^*}$ is $\sigma$-cofiltered with respect to $\cart_{P^*}$. Then
 $(i)$ follows by Corollary \ref{coro:gammapengammapestrella}.
\end{proof}

\begin{remark} \label{rem:flatiffbicolimit}
Note that, since $\sigma$-bicolimits are defined up to equivalence, we can say that the flat \mbox{$2$-functors} are exactly the 
$\sigma$-filtered $\sigma$-bicolimits of representable $2$-functors.
\end{remark}

Combining Proposition \ref{prop:primermitadkelly62}, Proposition \ref{prop:exactothenfiltering} and the implication $(i) \Rightarrow (iii)$ in the theorem above, it follows:

\begin{\prop} \label{prop:flatsiiexact} 
If $\A$ is finitely complete, then a $2$-functor $\A \mr{P} \Cat$ is flat if and only if it is left exact. \qed
\end{\prop}

\bibliographystyle{unsrt}

\begin{appendices}
\numberwithin{equation}{section}

\section{The main theorem for pseudofunctors} \label{appendix}

We will now prove a generalization of Theorem \ref{th:main} and Proposition \ref{prop:flatsiiexact} to $\Cat$-valued pseudofunctors $\A \mr{P} \Cat$ (with $\A$ still a $2$-category). We will prove it by applying those results to the $2$-functor $\widetilde{P}$ associated to the pseudofunctor $P$. 
We note that, while it is tempting to try and develop this generalization for $\A$ a bicategory, the computations with the bicategory $\cc{E}l_P$ are more complicated than in the 2-category case and, more fundamentally, as far as we know the fact that $\widetilde{P}$ is pseudo-equivalent to $P$ has not been shown in this case. The interested reader can check this possibility, as we may in the future.

We begin by giving the explicit definition of the $2$-category $p\cc{H}om_p(\A,\B)$ considered in Section \ref{flat}. We will use the explicit formulas defining pseudofunctors and pseudonatural transformations.  
  We refer the reader to \cite[\S$\,$2]{PKan1}, \cite[\S3]{Fiore}, \cite[\S1]{tesisEmi} among other choices for a more expanded description of the equations 
  below.
  
  \begin{\de} \label{def:pseudofunctor}
   Let $\A, \B$ be $2$-categories. A \emph{lax functor} $\A \mr{F} \B$ is given by the following data:
   
   \begin{itemize}
    \item[-] For each object $A \in \A$, an object $FA \in \B$.
    \item[-] For each hom-category $\A(A,B)$, a functor $\A(A,B) \xr{F_{A,B}} \B(FA,FB)$. Whenever possible we will abuse the notation $F_{A,B}$ by $F$.
    \item[-] For each object $A \in \A$, an invertible 2-cell $\alpha_A^F : id_{FA} \Mr{} F(id_A)$.
    \item[-] For each triplet of objects $A,B,C \in \A$, a natural transformation
    
    $$\xymatrix{ \A(B,C) \times \A(A,B) \ar@{}[dr]|{\alpha^F \Downarrow} \ar[d]^{\circ} \ar[r]^>>>>{F \times F} & \B(FB,FC) \times \B(FA,FB) \ar[d]^{\circ} \\
		    \A(A,C) \ar[r]^{F} & \B(FA,FC)}.$$
    
    This natural transformation is given, for each configuration $A \mr{f} B \mr{g} C$ by $2$-cells of $\B$, $FgFf \Mr{\alpha^F_{f,g}} F(gf)$. These data are subject to the axioms
   \end{itemize}

   \begin{tabular}{rll}
    {\bf LF0}. & For each $A \mr{f} B$, & $\alpha_{f,id_B}^F \circ (\alpha_B^F Ff) = Ff = \alpha_{id_A,f}^F \circ (Ff \alpha_A^F)$. \\
    {\bf LF1}. & For each $A \mr{f} B \mr{g} C \mr{h} D$, & $\alpha_{gf,h}^F \circ (Fh \alpha_{f,g}^F) = \alpha_{f,hg}^F \circ (\alpha_{g,h}^F Ff)$.
   \end{tabular}
   
   A \emph{lax natural transformation} $\theta$ between lax functors $\A \mrpair{F}{G} \B$ is 
   given by families  $\{FA \mr{\theta_A} GA\}_{A \in \A}$,  $\{Gf \theta_A \Mr{\theta_f} \theta_B Ff\}_{A\mr{f}B \in \A}$ satisfying the equations (cf. \S$\,$\ref{sub:terminology}, item \ref{laxoplaxnatural}):
   
   \bigskip

   \begin{tabular}{rll}
{\bf LN0}. & For all $A\in \A$, & ${\theta}_{id_A} \circ \alpha_A^G\theta_A = \theta_A \alpha_A^F$. \\
{\bf LN1}. & For all $A \mr{f} B \mr{g} C \in \A$, & $\theta_{gf} \circ \alpha_{f,g}^G\theta_C = \theta_C\alpha_{f,g}^F \circ \theta_g Ff \circ Gg \theta_f$. \\
{\bf LN2}. & For all $A \cellrd{f}{\gamma}{g} B \in \A$, & $\theta_B F\gamma \circ \theta_f = \theta_g \circ G\gamma \theta_A$. 
   \end{tabular}


   A \emph{pseudofunctor} is a lax functor $F$ such that the structural $2$-cells $\alpha_A^F$, $\alpha_{f,g}^F$ are all invertible. A \emph{pseudonatural transformation} between lax functors is a lax natural transformation such that the structural $2$-cells $\theta_f$ are all invertible. Modifications are defined as for $2$-functors.
  \end{\de}

Now we extend the Definition \ref{def:gammaP} of the $2$-category of elements to the case of pseudofunctors. We note that this construction is considered in \cite{Data}, and with greater generality in \cite[3.3.3]{Buckley}, where a theory of fibred $2$-categories is developed, corresponding to pseudofunctors with values in $2$-categories. An idea that is useful to have in mind is that $\Cat$-valued pseudofunctors are the $2$-dimensional analogous to the ``discrete'' $1$-dimensional fibrations.

\begin{\de}
 Let $\A \mr{P} \Cat$ be a pseudofunctor. $\cc{E}l_P$ is the $2$-category with objects, morphisms and $2$-cells described exactly as in Definition \ref{def:gammaP}, but now the structural $2$-cells of the pseudofunctor appear in the formulas for composition and identities:
 
 For $(x,A) \xr{(f,\varphi)} (y,B) \xr{(g,\psi)} (z,C)$, the composition is given by the formula $$(g,\psi)(f,\varphi) = (gf,\psi Pg(\varphi) (\alpha_{f,g}^P)^{-1}_x).$$ Identity morphisms are given by \mbox{$(x,A) \xr{(id_A,(\alpha_A^P)^{-1}_{x})} (x,A)$.} $2$-cells are composed as in $\A$.

 As for $2$-functors, we consider the $1$-subcategory $\cart_P$ of $\cc{E}l_P$ whose arrows are $(f,\varphi)$ with $\varphi$ an isomorphism. 
 \end{\de} 

\begin{remark}
 The fact that $(x,A) \xr{(id_A,(\alpha_A^P)^{-1}_x)} (x,A)$ are identities follows from axiom {\bf LF0} in Definition \ref{def:pseudofunctor}. The fact that the composition of morphisms is associative follows from axiom {\bf LF1}. In both cases the naturality of the structural $2$-cells $\alpha_A^P$, $\alpha_{f,g}^P$ respectively is used. The computations are somewhat lengthy but straightforward so we omit them.
\end{remark}

We also extend the results of \ref{sin:Tsubeta} and Proposition \ref{prop:Tsubeta} to pseudofunctors.

\begin{\prop} \label{prop:Tsubetaparapseudo}
 For each lax natural transformation $P \Mr{\eta} Q$ between $\Cat$-valued \mbox{pseudofunctors,} there is an induced $2$-functor $\cc{E}l_P \mr{T_{\eta}} \cc{E}l_Q$ given by the same formulas in \ref{sin:Tsubeta}.

\end{\prop}

\begin{proof}
 To show that $T_{\eta}$ preserves composition of morphisms strictly, consider \mbox{$(x,A) \mr{(f,\varphi)} (y,B) \mr{(g,\psi)} (z,C)$} in $\cc{E}l_P$, then the equation we have to show is
 $$(gf, \eta_C(\psi Pg(\varphi) (\alpha_{f,g}^P)^{-1}_x) \ (\eta_{gf})_x) = (gf, \eta_C(\psi) \ (\eta_g)_y \ Q(g)(\eta_B(\varphi)(\eta_f)_x) \ (\alpha_{f,g}^Q)^{-1}_{\eta_A(x)} )$$
 This equation follows at once from axiom {\bf LN1} in Definition \ref{def:pseudofunctor} using the naturality of $\eta_g$ with respect to the arrow $\varphi$. The fact that $T_{\eta}$ preserves identities follows immediately from axiom {\bf LN0}. The rest of the verifications of the $2$-functoriality are straightforward and identical to the case of $2$-functors so we omit them.
\end{proof}

 We note that the formulas in \ref{sin:Tsubeta} are the same formulas 
 of \cite[3.3.12]{Buckley}, where it is stated (for a pseudonatural transformation $\eta$, though the same proof would work for lax natural instead) that $T_{\eta}$ is a morphism of bicategories. In our case computations are simpler, and we have $2$-functoriality instead.
Proposition \ref{prop:Tsubeta} holds for pseudofunctors with exactly the same proof:

\begin{\prop} \label{prop:Tsubetaparapseudo2}
  Let $P,Q: \A \mr{} \Cat$ be pseudofunctors, and $P \Mr{\eta} Q$ a pseudonatural transformation. If $\eta_A$ is full and faithful for each $A \in \A$, then the
 $2$-functor $\cc{E}l_P \mr{T_{\eta}} \cc{E}l_Q$ of Proposition \ref{prop:Tsubetaparapseudo}
 is $2$-fully-faithful and the $1$-subcategories given by the cocartesian arrows satisfy $\cart_P = T_\eta^{-1}(\cart_Q)$. \qed
\end{\prop}

We now recall (see \cite[4.2]{Power}, or the nLab website on pseudofunctors) the construction of the  $2$-functor $\A \mr{\widetilde{P}} \Cat$ associated to a pseudofunctor  $\A \mr{P} \Cat$. We state only the facts that we will need.

Given a pseudofunctor $\A \mr{P} \Cat$, there is a $2$-functor $\A \mr{\widetilde{P}} \Cat$ and an equivalence in $p\cc{H}om_p(\A,\Cat)$ between $P$ and $\widetilde{P}$, i.e. a pseudo-equivalence $P \Mr{\eta} \widetilde{P}$. The description of $\widetilde{P}$ on objects is as follows, $\widetilde{P}B$ is the category with pairs $(f,x)$ as objects, where $A \mr{f} B \in \A$ and $x \in PA$, and arrows $(f,x) \mr{\varphi} (f',x')$ given by an arrow $Pf(x) \mr{\varphi} Pf'(x')$ in $PB$. For $B \mr{g} B'\in \A$, we have $\widetilde{P}g(f,x) = (gf,x)$. The definition of $PA \mr{\eta_A} \widetilde{P}A$ on objects is $\eta_A(x) = (id_A,x)$, and for the pseudo-inverse $\widetilde{P} \Mr{\eps} P$ we have $\eps_B(f,x) = Pf(x)$. 

When applied to $\widetilde{P}$, Theorem \ref{th:main} yields:

\begin{theorem}\label{th:parapseudo}
 Let $\A \mr{P} \Cat$ be a pseudofunctor.  Then the following are equivalent.
 
 \begin{itemize}
  \item[(i)] $\cc{E}l_P$ is $\sigma$-cofiltered with respect to the family $\cart_P$ of cocartesian arrows.
  
  \vspace{-.2cm}
  
  \item[(ii)] $P$ is a $\sigma$-filtered $\sigma$-bicolimit of representable \mbox{$2$-functors} in $p\cc{H}om_p(\A,\Cat)$.
  
  \vspace{-.2cm}
  
  \item[(iii)] $P$ is flat. 
 \end{itemize}
\end{theorem}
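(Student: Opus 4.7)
The plan is to reduce Theorem \ref{th:parapseudo} to Theorem \ref{th:main} by replacing $P$ with its associated $2$-functor $\widetilde{P}$ and transferring each of the three conditions along the pseudo-equivalence $P \Mr{\eta} \widetilde{P}$. This is possible precisely because each $\eta_A$ is an equivalence of categories (so in particular full, faithful, and essentially surjective), and because all three conditions (i)--(iii) are, in an appropriate sense, invariant under pseudo-equivalence.

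The main technical tool is the induced $2$-functor $T_\eta : \cc{E}l_P \mr{} \cc{E}l_{\widetilde{P}}$ of Proposition \ref{prop:Tsubetaparapseudo}. By Proposition \ref{prop:Tsubetaparapseudo2}, since each $\eta_A$ is fully faithful, $T_\eta$ is $2$-fully-faithful and $\cart_P = T_\eta^{-1}(\cart_{\widetilde{P}})$. To apply Proposition \ref{prop:initialencofiltered} to $T_\eta^{op}$ (so as to transfer $\sigma$-cofilteredness from $\cc{E}l_{\widetilde{P}}$ to $\cc{E}l_P$), I need to verify the dual of axiom $\sigma{\bf C0}$: for every $(y,B)\in\cc{E}l_{\widetilde{P}}$, there exist $(x,A)\in\cc{E}l_P$ and a morphism in $\cart_{\widetilde{P}}$ from $T_\eta(x,A)=(\eta_A(x),A)$ to $(y,B)$. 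Taking $A=B$, $f=\mathrm{id}_B$, and using essential surjectivity of $\eta_B$, one obtains $x\in PB$ and an isomorphism $\widetilde{P}(\mathrm{id}_B)(\eta_B(x))\cong \eta_B(x)\cong y$, which yields the required cocartesian arrow.

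With this in place I would establish the three implications as follows. For \emph{(iii) $\Rightarrow$ (i)}: if $P$ is flat then by Corollary \ref{coro:exactoestableporequiv} so is $\widetilde{P}$ (since $Lan_h(-)$ sends pseudo-equivalences to pseudo-equivalences by Proposition \ref{coro:PyPprima}, hence preserves left exactness of the extended pseudofunctor); Theorem \ref{th:main} then gives that $\cc{E}l_{\widetilde{P}}$ is $\sigma$-cofiltered with respect to $\cart_{\widetilde{P}}$, and Proposition \ref{prop:initialencofiltered} applied to $T_\eta^{op}$ transfers this to $\cc{E}l_P$. For \emph{(i) $\Rightarrow$ (ii)}: since $\cart_P=T_\eta^{-1}(\cart_{\widetilde{P}})$ and $T_\eta$ is $2$-fully-faithful, the same argument (used in the other direction, via axiom $\sigma{\bf C0}$ for $T_\eta^{op}$ again) shows $\cc{E}l_{\widetilde{P}}$ is $\sigma$-cofiltered; then Theorem \ref{th:main} expresses $\widetilde{P}$ as a $\sigma$-filtered $\sigma$-colimit of representables in $\cc{H}om_p(\A,\Cat)$, and composing with the pseudo-equivalence $\widetilde{P}\approx P$ yields the same conclusion for $P$ as a $\sigma$-bicolimit in $p\cc{H}om_p(\A,\Cat)$. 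For \emph{(ii) $\Rightarrow$ (iii)}: a $\sigma$-filtered $\sigma$-bicolimit of representables is flat by Corollary \ref{coro:colimitdeplayosesplayo}, and flatness is pseudo-equivalence invariant by Corollary \ref{coro:PyPprima} and Corollary \ref{coro:exactoestableporequiv}.

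The only genuinely delicate point I expect is the invariance of flatness under the pseudo-equivalence $P\approx\widetilde{P}$: one must check that $Lan_h P$ and $Lan_h \widetilde{P}$ are pseudo-equivalent as pseudofunctors $\cc{H}om_p(\A^{op},\Cat)\mr{}\Cat$, which requires that the pointwise bi-Kan extension is pseudofunctorial in the pseudofunctor being extended. This is exactly the content of Proposition \ref{coro:PyPprima} combined with Remark \ref{rem:limitsfuntorialparapseudo}. Everything else is direct translation, relying on the fact that the bicolimit universal property characterizes objects only up to equivalence, so no subtle size or coherence issue arises when passing between $P$ and $\widetilde{P}$.
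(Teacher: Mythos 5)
Your overall strategy---reducing to Theorem \ref{th:main} applied to the associated $2$-functor $\widetilde{P}$ and transporting each of the three conditions along the pseudo-equivalence $P \Mr{\eta} \widetilde{P}$---is exactly the paper's, and your treatment of (ii), of (iii), and of the implication (iii) $\Rightarrow$ (i) is correct. Your verification of axiom $\sigma {\bf C0}$ for $T_\eta$ via essential surjectivity of $\eta_B$ is a harmless variant of the paper's, which instead exhibits the explicit cocartesian arrow $(f, id_{Pf(x)})$ from $((id_A,x),A)$ to $((f,x),B)$.

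There is, however, a genuine gap in your (i) $\Rightarrow$ (ii). Proposition \ref{prop:initialencofiltered} transfers $\sigma$-filteredness only from the \emph{codomain} of a pseudo-fully-faithful functor satisfying $\sigma {\bf C0}$ to its \emph{domain}; applied to $T_\eta^{op}$ it yields ``$\cc{E}l_{\widetilde{P}}$ $\sigma$-cofiltered $\Rightarrow$ $\cc{E}l_P$ $\sigma$-cofiltered'' and cannot be ``used in the other direction'' to obtain the converse, which is precisely what (i) $\Rightarrow$ (ii) requires. The converse is not formal: a functor satisfying only $\sigma {\bf C0}$ out of a $\sigma$-filtered category need not have $\sigma$-filtered codomain (for instance, the coequalization axiom for a pair $A' \rightrightarrows TC$ with $A'$ not in the image of $T$ does not reduce to $\C$), and the cofinality axioms $\sigma {\bf C1}$, $\sigma {\bf C2}$ that would make such a transfer work are themselves \emph{derived} in Proposition \ref{prop:initialencofiltered} from the codomain being filtered, so the argument would be circular. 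Note also that $T_\eta$ is not essentially surjective up to equivalence in $\cc{E}l_{\widetilde{P}}$, so no biequivalence argument rescues this. The paper closes the gap by additionally considering the $2$-functor $T_\eps : \cc{E}l_{\widetilde{P}} \mr{} \cc{E}l_P$ induced by the pseudo-inverse $\widetilde{P} \Mr{\eps} P$, which by Proposition \ref{prop:Tsubetaparapseudo2} is again $2$-fully-faithful with $\cart_{\widetilde{P}} = T_\eps^{-1}(\cart_P)$, and verifying $\sigma {\bf C0}$ for it separately: for $(x,A) \in \cc{E}l_P$ one has $T_\eps((id_A,x),A) = (P(id_A)(x),A)$, and the required arrow of $\cart_P$ into $(x,A)$ is $(id_A,\varphi)$ with $\varphi$ assembled from the structural isomorphism $\alpha_A^P$ of the pseudofunctor. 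You need to add this second functor and this verification; with it, the rest of your argument goes through.
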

\begin{proof}
We will show the equivalence of each of the items 
above with the corresponding statement
of Theorem \ref{th:main} for the $2$-functor $\widetilde{P}$:

\vspace{1ex}

\noindent
\emph{(i)}  By Proposition \ref{prop:Tsubetaparapseudo2} we have induced $2$-functors $\cc{E}l_P \mr{T_{\eta}} \cc{E}l_{\widetilde{P}}$, $\cc{E}l_{\widetilde{P}} \mr{T_{\eps}} \cc{E}l_P$, both \mbox{$2$-fully-faithful} and satisfying $\cart_P = T_\eta^{-1}(\cart_{\widetilde{P}})$, $\cart_{\widetilde{P}} = T_\eps^{-1}(\cart_P)$. In order to show that $\cc{E}l_P$ is $\sigma$-cofiltered if and only if $\cc{E}l_{\widetilde{P}}$ is so, by Proposition \ref{prop:initialencofiltered} it suffices to show axiom $\sigma {\bf C0}$ for these $2$-functors.
  
\noindent  
\begin{tabular}{lp{13.5cm}}
 For $T_{\eta}$: & given $((f,x),B)$ in $\cc{E}l_{\widetilde{P}}$, where $A \mr{f} B \in \A$ and $x \in PA$, consider \mbox{$T_{\eta}(x,A) = ((id_A,x),A) \xr{(f,id_{Pf(x)})} ((f,x),B)$.} \\
For $T_{\eps}$: & given $(x,A)$ in $\cc{E}l_P$, consider $T_{\eps}((id_A,x),A) = (P(id_A)(x),A) \mr{(id_A,\varphi)} (x,A)$, where $\varphi$ is the isomorphism $\varphi: P(id_A)P(id_A)(x) \xr{P(id_A)(\alpha_A^P)_x^{-1}} P(id_A)(x) \xr{(\alpha_A^P)_x^{-1}} x$.
 \end{tabular}
  
\vspace{1ex}

\noindent
\emph{(ii)} Immediate from the pseudo-equivalence $P \Mr{\eta} \widetilde{P}$ (recall Remark \ref{rem:flatiffbicolimit}).

\vspace{1ex}

\noindent
\emph{(iii)} Immediate from Corollary \ref{coro:PyPprima}.
\end{proof}

We end the paper showing that, in the presence of finite bilimits, flat pseudofunctors coincide with left exact ones.

\begin{\prop}
If $\A$ is finitely complete, then a pseudofunctor $\A \mr{P} \Cat$ is flat if and only if it is left exact. 
\end{\prop}
\begin{proof}
By Corollary \ref{coro:exactoestableporequiv}, $P$ is left exact if and only if $\widetilde{P}$ is so. By Corollary \ref{coro:PyPprima}, $P$ is flat if and only if $\widetilde{P}$ is so. Then the proposition follows from Proposition \ref{prop:flatsiiexact} applied to $\widetilde{P}$. 
\end{proof}
\end{appendices}
\end{document}